    \newtheorem{theorem}{Theorem}[section]
    \newtheorem{proposition}[theorem]{Proposition}
    \newtheorem{lemma}[theorem]{Lemma}
    \newtheorem{corollary}[theorem]{Corollary}
        \newtheorem{assumption}[theorem]{Assumption}
        \newtheorem{definition}[theorem]{Definition}
    \theoremstyle{definition}
    \newtheorem{remark}[theorem]{Remark}
    \newcounter{smalllist}
    \numberwithin{equation}{section}
    \newcommand{\lb}{\label}
    \newcommand{\ol}{\overline}
    \newcommand{\beq}{\begin{equation}}
    \newcommand{\eeq}{\end{equation}}
    \newcommand{\bal}{\begin{align}}
    \newcommand{\eal}{\end{align}}
    \newcommand{\bals}{\begin{align*}}
    \newcommand{\eals}{\end{align*}}
    \newcommand{\R}{\ensuremath{\mathbb{R}}}
    \newcommand{\bbN}{{\mathbb{N}}}
    \newcommand{\bbR}{{\mathbb{R}}}
    \newcommand{\bbP}{{\mathbb{P}}}
    \newcommand{\bbE}{{\mathbb{E}}}
    \newcommand{\calB}{{\mathcal B}}
    \newcommand{\calF}{{\mathcal F}}
      \newcommand{\calO}{{\mathcal O}}
    \newcommand{\ep}{\varepsilon}
    \newcommand{\eps}{\varepsilon}
\date{\today}
\title{A Hele-Shaw problem with  interior and  free boundary oscillation: well-posedness and homogenization}
\author{Olga Turanova\thanks{Department of Mathematics, Michigan State University, Wells Hall, 619 Red Cedar Road,
East Lansing, MI 48824, \texttt{turanova@msu.edu}.}, Yuming Paul Zhang\thanks{Department of Mathematics and Statistics, Auburn University, Parker Hall, 221 Roosevelt Concourse, Auburn, AL 36849, \texttt{yzhangpaul@auburn.edu}.}
}
\begin{document}
     \maketitle
\begin{abstract} 
We investigate a Hele-Shaw type free boundary problem in one spatial dimension, where heterogeneities appear both on the free boundary and within the interior of the positivity set. Our contributions are twofold. First, we establish well-posedness and a comparison principle for the problem by introducing a novel notion of viscosity flows. Second, under the assumption that the coefficients are stationary ergodic, we prove a stochastic homogenization result. Our results are new even in the periodic setting. To derive the effective free boundary velocity, we use a new approximation that accounts for both interior homogenization and free boundary propagation.

\end{abstract}

\section{Introduction} \lb{S1}

We consider the following free boundary problem posed in one spatial dimension:

\begin{equation}\lb{main}
\left\{
\begin{aligned}
    &-\partial_x(A(x) \partial_x p)=F(x) &&\quad\text{ in }\{p>0\},\\
  & \partial_t p=V(x,  \partial_x p)| \partial_x p| &&\quad\text{ on }\partial\{p>0\}.
\end{aligned}   
\right.
\end{equation}
Here, $p=p(x,t)$ is unknown, and $A$, $F$, and $V$ are given functions. 
The  condition $\partial_t p=V(x,\partial_x p)| \partial_x p|$ in \eqref{main} says that the free boundary, $\partial \{p>0\}$, expands with velocity
\[
V(x,\partial_x p)\quad\text{ at }x\in \partial \{p(\cdot,t)>0\}.
\]
We assume  that $A$ and $F$ are strictly positive,  $A$ is $C^{1+\sigma}$, and $V=V(x,q)$ is positive and   monotonically increasing in $q$ when $q>0$ and decreasing in $q$ when $q<0$ (see Assumption \ref{assump2} for the precise hypotheses on the coefficients).

When $A\equiv 1$, $F\equiv 0$, and $V(x,q)=|q|$, the equation \eqref{main} becomes the classical Hele-Shaw problem, which was introduced by 
H. S. H. Shaw in 1898  to describe the dynamics of an incompressible viscous Newtonian fluid between two parallel, narrowly separated surfaces \cite{hele1898experiments}.  More recently, \eqref{main} has been used in the modeling of tumor cells \cite{PQV,David_S,jacobs2022tumor,SulakTuranova}, which we discuss more in Section \ref{ss1.3}, and in the context of congested population dynamics \cite{maury2010,CKY}. The Hele-Shaw problem can also be transformed into the one-phase Muskat problem with constant gravity, see \cite{Hongjie,schwab2024well}. 

Even in the case of constant coefficients ($A\equiv 1$, $F\equiv 0$, and $V(x,q)=|q|$), solutions to  \eqref{main} can develop singularities in finite time; thus, weak solutions are needed. Our focus is  on viscosity solutions, which were introduced for \eqref{main} in the constant coefficient case in \cite{kim2003}. 
In our first main result, Theorem \ref{T.1}, we prove existence and uniqueness of viscosity solutions for \eqref{main} in one spatial dimension. To this end, we introduce a novel notion of viscosity flows and we establish the comparison principle for viscosity flows\footnote{A notion of viscosity sub- and super-flows was introduced by Barles and Souganidis in \cite{barles1998new} to describe the motion of fronts that depends on their curvature. Although we use similar terminology, the setting in \cite{barles1998new} is completely different from the Hele-Shaw type flow that we consider here.}.

Next, we aim to understand the impact of   fluctuations in the medium that occur at scale $\ep$ for $\ep>0$ very small. This is modeled by the equation
\begin{equation}\lb{1.1}
\left\{
\begin{aligned}
  & -\partial_x(A(x,\eps^{-1}x,\omega) \partial_x p_\eps)=F(x,\eps^{-1}x,\omega) &&\quad\text{ in }\{p_\eps>0\},\\
&   \partial_t p_\eps=B(x,\eps^{-1}x,\omega)| \partial_x p_\eps|^2+G(x,\eps^{-1}x,\omega)| \partial_x p_\eps| &&\quad\text{ on }\partial\{p_\eps>0\},
\end{aligned}   
\right.
\end{equation}
where $\omega$ is an element of a probability space $(\Sigma, \mathcal{F}, P)$,  the coefficients 
 $A$, $B$, $F$ are strictly positive, and $G$ is nonnegative. The coefficients are assumed to be regular enough and  stationary ergodic  with respect to the fast variable $\eps^{-1}x$ (see Assumption \ref{assumption} and Definition \ref{D.ergodic}). Heuristically, the latter assumption captures  that, on the microscopic scale $\ep$, the coefficients vary  randomly  but in a self-averaging way. 
Due to this, it is expected that, as $\ep\rightarrow 0$, solutions of \eqref{1.1} should converge to those of the macroscopic problem
\begin{equation}\lb{1.3}
\left\{
\begin{aligned}
&    -\partial_x(\overline A(x) \partial_x \overline p)=\overline F(x) &&\quad\text{ in }\{\overline p>0\},\\
&   \partial_t \overline p=\overline V(x,\partial_x \overline p)| \partial_x \overline p| &&\quad\text{ on }\partial\{\overline  p>0\},
\end{aligned}   
\right.
\end{equation}
where the homogenized coefficients $\ol A$, $\ol V$, $\ol F$ are averages (in the correct sense) of the original ones. 
This is exactly what we establish in our second main result, Theorem \ref{thm:main}.

To prove Theorem \ref{thm:main}, we follow a general approach that appears throughout the  literature on stochastic homogenization of elliptic equations: applying the subadditive ergodic theorem to a suitable quantity to reveal the effective behavior in the $\ep\rightarrow 0$ limit.  However, the challenge in studying \eqref{1.1} is that homogenization occurs 
both in the interior region, due to the oscillating elliptic operator, and at the free boundary. These two processes
are fundamentally different in nature --- they occur in different dimensions --- and finding an appropriate subadditive quantity that accounts for their interaction forms the most novel part of our work.

In the remainder of the introduction we state our main results and the necessary assumptions (Section \ref{ss:results}), describe the main ideas behind the proofs (Section \ref{ss:ideas}), provide a brief literature review (Section \ref{ss:lit}), and describe the connections between models of tissue growth and the free boundary problem under study here (Section \ref{ss1.3}).

\subsection{Assumptions and main results} \label{ss:results}
Before stating our main results, we introduce some notation. 
For a space-time set $\Omega\subseteq \R^2$, we use $\Omega(t)$ to denote the time slice of $\Omega$ at time $t$; namely, 
\[
\Omega(t) = \{x \mid (x,t)\in \Omega\}.
\]
For a nonnegative lower semicontinuous function $p:\bbR\times [0,\infty)\to [0,\infty)$, we denote its positivity set and the  boundary of its positivity set by,
\[
\Omega_p:=\{(x,t)\in\bbR\times [0,\infty) \mid p(x,t)>0\}\quad\text{and}\quad\Gamma_p:=\{(x,t)\in\bbR\times [0,\infty) \mid x\in\Gamma_p(t)\}.
\]

\subsubsection{Existence and uniqueness}
We now state the assumptions needed for our result on existence and uniqueness for \eqref{main}.
\begin{assumption}[Coefficients of \eqref{main}]
\label{assump2} We assume that the following hold for
\[
A, F: \R\rightarrow \R \quad \text{and}\quad V:\R\times \R\rightarrow \R.
\]
\begin{enumerate}[(i)]
    \item \label{item:assump1.1.1} $A$, $F$ are  uniformly positive, Lipschitz continuous and bounded, and $A_x$ is uniformly $\sigma$-H\"{o}lder continuous for some $\sigma\in (0,1)$. 

    \item \label{item:assump1.1 2}
    There exist $C>0$ and a continuous function $\eta:(0,\infty)\to(0,\infty)$ such that, for all $x_1,x_2,q_1,q_2\in\bbR$, we have,
    \beq\lb{cond4}
|V(x_1,q_1)-V(x_2,q_2)|\leq C(1+|q_1|)|x_1-x_2|+C|q_1-q_2|.
\eeq
Furthermore, for all $\gamma\in (0,1)$ and for all $x_1,x_2,q_1\in\bbR$ with $C|x_1-x_2|\leq \gamma$, we have,
\begin{align}
\lb{cond5'}
V(x_1,(1+\gamma)q_1)&\geq V(x_1,q_1)\geq \eta(|q_1|);
\\
\lb{cond5}
(1+\gamma)^2 V(x_1,q_1)+C|x_1-x_2|^2/\gamma&\geq V(x_2,(1+\gamma)q_1).
\end{align}
\end{enumerate}
\end{assumption}

Our first main result is:
\begin{theorem}[Well-posedness]
\label{T.1}
Under Assumption \ref{assump2}, let $\Omega_0\subseteq\R$ be open and bounded. Then there exists a unique viscosity solution to \eqref{main} with initial data having support $\Omega_0$.
\end{theorem}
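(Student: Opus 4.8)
The plan is to construct the unique viscosity solution by Perron's method, building on the comparison principle for the notion of viscosity flows that the paper introduces. The argument divides naturally into three parts: (1) establishing a comparison principle; (2) producing appropriate barriers (sub- and supersolutions) so that Perron's method has something to work with; (3) running Perron's method and verifying that the resulting envelope is in fact a viscosity solution, with positivity set initially equal to $\Omega_0$.

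First I would set up the comparison principle for viscosity flows, which is the technical heart of the matter and which I expect to be the main obstacle. The difficulty, as the authors flag, is that the free boundary condition $\partial_t p = V(x,\partial_x p)|\partial_x p|$ couples the geometry of $\Gamma_p$ with the gradient of the elliptic profile, and $V$ depends on $x$ — so one cannot simply compare solutions pointwise. The standard trick (going back to \cite{kim2003}) is to use a sup-/inf-convolution or a multiplicative dilation in space and time to create strict separation, then derive a contradiction at a first touching point of the free boundaries. Conditions \eqref{cond4}, \eqref{cond5'}, \eqref{cond5} in Assumption \ref{assump2} are precisely tailored to make this dilation argument work: \eqref{cond5'} gives monotonicity of $V$ in the gradient (so that a dilated subsolution has faster boundary speed), while \eqref{cond5} controls the error incurred when shifting the $x$-dependence of $V$ under the dilation. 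The elliptic part is handled by the maximum principle for $-\partial_x(A\partial_x\cdot)=F$ in one dimension, where one has an explicit representation of solutions and of $|\partial_x p|$ at the boundary in terms of $A$, $F$, and the interval length; the $C^{1+\sigma}$ regularity of $A$ enters to control how $\partial_x p$ varies.

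Next I would build barriers. For the supersolution, take a sufficiently large interval containing $\Omega_0$ and let its endpoints move outward at a speed dominating $V$; solving the elliptic equation inside gives an explicit supersolution. For the subsolution, shrink slightly and let the boundary move slowly (or not at all, using positivity of $V$ and of $F$ to guarantee expansion) — one-dimensionality makes all of this fully explicit. These barriers also pin down the initial positivity set: any solution between them must have positivity set converging to $\Omega_0$ as $t\to 0^+$. Boundedness of $\Omega_0$ ensures the solution stays compactly supported for each finite time, avoiding issues at infinity.

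Finally, with comparison and barriers in hand, I would define $p$ as the supremum (lower-semicontinuous envelope) of all viscosity subsolutions lying below the supersolution barrier and having initial support in $\Omega_0$, then invoke the standard Perron argument: the envelope is a subsolution by stability of the viscosity notion under sup, and if it failed to be a supersolution one could bump it up near the bad point — contradicting maximality — where the bump is constructed using the explicit one-dimensional elliptic solutions and the continuity \eqref{cond4} of $V$. Uniqueness is then immediate from comparison applied in both directions. The one subtlety to treat carefully is the interplay at the initial time and the possibility of the positivity set being disconnected, but in one dimension each connected component is an interval evolving independently until (if ever) two components merge, and the comparison principle handles merging as well.
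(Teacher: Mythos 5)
Your high-level architecture (comparison principle plus Perron's method, run at the level of the paper's viscosity flows) matches the paper's route, which deduces Theorem \ref{T.1} from Theorem \ref{T.3.4} and Lemma \ref{L.def}. However, there is a genuine gap at the step you yourself identify as the heart of the matter: you propose to prove comparison by the ``standard trick (going back to \cite{kim2003})'' of sup-/inf-convolution or dilation applied to the sub- and supersolutions, with the assumptions \eqref{cond5'}, \eqref{cond5} absorbing the errors. This is precisely the argument that fails here: because the interior operator $-\partial_x(A(x)\partial_x\cdot)=F(x)$ has $x$-dependent coefficients, the sup-convolution of a subsolution is no longer a subsolution (the convolution shifts the coefficients), so the touching-point argument cannot be run on the convolved functions directly. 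The paper's repair --- which is the novel content and is absent from your sketch --- is to regularize the \emph{domains}: one enlarges/shrinks the sub- and superflows in space-time (the sets $U_1,U_2$ in \eqref{5.1}--\eqref{5.2}, with a time dilation and a radius $r-ht$), takes the \emph{exact} elliptic solutions $u_1,u_2$ associated with the regularized domains, and proves that the sup-/inf-convolutions are merely one-sided barriers for these associated functions (Lemma \ref{L.4.1}, which crucially uses the strict positivity of $F$ to absorb the $O(r^\sigma)$ coefficient error), after which the velocity comparison with a strict gap $h/4$ at regular boundary points (Proposition \ref{P.4.2}) produces the contradiction. Without this device, the dilation argument you describe does not close.

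A secondary gap concerns uniqueness. You assert it is ``immediate from comparison applied in both directions,'' but the comparison principle (Theorem \ref{L.cp}) requires \emph{strictly} ordered initial supports, $\overline{\Omega_1(0)}\subset\Omega_2(0)$, so it cannot be applied directly to two solutions with the same initial support $\Omega_0$. The paper bridges this by first proving immediate strict expansion of the support, $\overline{\Omega_2(t)}\subseteq\Omega_2(t+s)$ for $s>0$ (using the strict positivity of $V$ and of the boundary gradient via explicit one-dimensional barriers), then comparing with the time-shifted flow $\Omega_2(\cdot+s)$ and letting $s\to0$; moreover, since comparison is only established for flows, one also needs Lemma \ref{L.def} to transfer uniqueness of flows to uniqueness of viscosity solutions. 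Your Perron construction at the level of functions would likewise need this bridge (the paper runs Perron on subflows, i.e.\ sets, and notes that the positivity set of a viscosity subsolution need not be a subflow), but that is a structural adjustment rather than an obstruction.
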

The definition of viscosity solutions for \eqref{main} is stated in Definition \ref{def23}. 

\begin{remark}
    
Assumption \ref{assump2} \ref{item:assump1.1 2} implies that $V(\cdot, q)$ is strictly positive for each $q$, which guarantees that the support of $\ol p$ immediately expands at $t=0$. (We remark  that this property --- immediate expansion of the support of the solution --- also played a role in \cite{kim2003,KimMellet2009}.) 
This, as well as the monotonicity condition \eqref{cond5'} on $V$, is used to obtain the uniqueness of solutions. The condition \eqref{cond5} is technical.

\end{remark}

\subsubsection{Homogenization}
In order to state our second main result, we need the following definition. 
\begin{definition}[Stationary ergodic]
\label{D.ergodic}
Let $(\Sigma,\calF,P)$ be a probability space, and let 
\[
\left\{\tau_y \mid \tau_y:\Sigma\to\Sigma,\text{ with }y\in\bbR\right\}
\]
be a  measure-preserving transformation group such that $\tau_y\circ \tau_{z}=\tau_{y+z}$.
\begin{enumerate}[(i)]
    \item 
The transformation group $\{\tau_y\}$ is {\it ergodic} if the following holds: $\tau_yE=E$ for some $E\in\calF$ and all $y\in\bbR$ implies $\bbP[E]\in\{0,1\}$.
\item We say that 
a random variable $f:\bbR\times\Sigma\to\bbR$ is stationary if
\[
f(y+z,\omega)=f(y,\tau_z\omega) \quad \text{for all }y,z\in\bbR\text{ and }\omega\in\Sigma.
\]   
\end{enumerate}
\end{definition}

Next, we make precise our assumptions on the coefficients of \eqref{1.1}. 
\begin{assumption}[Coefficients of \eqref{1.1}]
\label{assumption}
We assume that the following hold for
\[
A, B, F, G:\bbR\times\bbR\times\Sigma\to\bbR. 
\]
\begin{enumerate}[(i)]
    \item \label{item:assump A F}$A$ and $F$ are  uniformly positive, continuous and bounded; $A_x(x,y,\omega)$ and $A_y(x,y,\omega)$ are uniformly bounded and $\sigma$-H\"{o}lder continuous in $x$ and $y$ for some $\sigma\in(0,1)$.

    \item $B$ is uniformly positive, bounded, and Lipschitz continuous in $x$ and $y$; $G$ is nonnegative and uniformly bounded, and $\sqrt{G(x,y,\omega)}$ is uniformly Lipschitz continuous in $x$ and $y$.

    \item For each fixed $x$, $A(x,\cdot,\cdot)$, $B(x,\cdot,\cdot)$,  $F(x,\cdot,\cdot)$ and $G(x,\cdot,\cdot)$ satisfy the stationary, ergodic assumption. 

    \item Either $G$ is uniformly strictly positive on its domain or $G\equiv 0$.
\end{enumerate}

\end{assumption}
In Lemma \ref{lem:assumption ep}, we show that if the coefficients $A$, $B$, $F$, and $G$ in equation \eqref{1.1} satisfy Assumption \ref{assumption}, then, for each fixed $\omega\in \Sigma$ and $\ep>0$, the assumptions of Theorem \ref{main} are satisfied, and therefore \eqref{1.1} has a unique viscosity solution. 

We are now ready to state:

\begin{theorem}[Homogenization]
\label{thm:main}
Given a probability space $(\Sigma,\calF,\bbP)$, suppose Assumption \ref{assumption} holds.
Then there exist deterministic functions $\ol A(x)$, $\ol V(x, q)$, 
and $\ol F(x)$, which satisfy Assumption \ref{assump2}, 
such that the following holds. For any $\calO\subseteq\bbR$ open and bounded,  there exists $\Sigma_0\subseteq\Sigma$ of full measure such that, for any $\omega\in\Sigma_0$ and almost every $t>0$, we have
\[
p_\eps(\cdot,t,\omega) \to \ol p(\cdot,t)
\]
locally uniformly, where  $p_\eps(\cdot,\cdot,\omega)$ is the solution to \eqref{1.1} with $\Omega_{\ol p_\eps(\cdot, \cdot, \omega)}(0)=\calO$  and $\ol p$ is the unique viscosity solution to \eqref{1.3} 
with $\Omega_{\ol p}(0)=\calO$.
\end{theorem}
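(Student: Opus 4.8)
The plan is to (i) identify the homogenized coefficients $\ol A,\ol F,\ol V$ and the correctors that resolve both the interior and the free boundary, (ii) verify that $(\ol A,\ol F,\ol V)$ satisfies Assumption~\ref{assump2}, so that \eqref{1.3} is well posed by Theorem~\ref{T.1}, and (iii) prove $p_\eps\to\ol p$ by a half-relaxed limits argument resting on the comparison principle behind Theorem~\ref{T.1}. The interior part of (i) is classical one-dimensional divergence-form homogenization: since the equation is quasi-static, the flux $A_\eps\partial_x p_\eps$ equals a constant minus $\int_0^x F_\eps$, from which one reads off that $\ol A(x)$ is the reciprocal of the ergodic average of $1/A(x,\cdot,\cdot)$ (the harmonic mean in the fast variable) and $\ol F(x)$ is the ergodic average of $F(x,\cdot,\cdot)$; by ergodicity both are deterministic, and one records the first-order corrector so that on compact subintervals of $\{\ol p>0\}$ the function $p_\eps$ agrees with $\ol p$ up to $o(1)$ plus a corrector whose slope is $\approx q\,\ol A(x)/A(x,x/\eps,\omega)$ when $\partial_x\ol p=q$.

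The heart of the matter is $\ol V$. Heuristically, when the free boundary sits at $\xi$ the flux there is, to leading order and away from a boundary layer, the macroscopic flux $\ol A(\xi)\partial_x\ol p(\xi)$, so the microscopic slope at $\xi$ oscillates like $\ol A(\xi)q/A(\xi,\xi/\eps,\omega)$ with $q=\partial_x\ol p(\xi)$, and the free boundary point obeys $\dot\xi\approx v(\xi,\xi/\eps,\omega;q)$ with
\[
v(x,y,\omega;q)=\frac{B(x,y,\omega)\,\ol A(x)\,|q|}{A(x,y,\omega)}+G(x,y,\omega);
\]
averaging the traversal time over a macroscopic increment then predicts $\ol V(x,q)=(\bbE[1/v(x,\cdot,\cdot;q)])^{-1}$. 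Making this rigorous is delicate because one cannot simply substitute the divergence-form corrector into the velocity law: the corrector controls $\partial_x p_\eps$ only in an averaged sense and degenerates precisely at the free boundary. Instead I would introduce a self-contained auxiliary problem that propagates a free boundary through the unrescaled random medium while simultaneously re-solving the interior profile behind it, and let $T_\omega(y_1,y_2;x,q)$ be the time for this free boundary to pass from $y_1$ to $y_2$; fixing the flux (equivalently the macroscopic slope $q$) across each traversal makes $T_\omega$ additive in the traversed interval up to errors that are lower order over long intervals, while stationarity $T_{\tau_z\omega}(y_1,y_2;x,q)=T_\omega(y_1+z,y_2+z;x,q)$ is built in; the subadditive ergodic theorem then gives $T_\omega(0,R;x,q)/R\to 1/\ol V(x,q)$ almost surely, with a deterministic limit by ergodicity. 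This auxiliary problem --- which couples the one-dimensional interior homogenization to the zero-dimensional free boundary propagation --- is the ``new approximation'' mentioned in the introduction. One then verifies Assumption~\ref{assump2} for $\ol A,\ol F,\ol V$: positivity and the regularity bounds pass to the averages, while the monotonicity conditions \eqref{cond5'}--\eqref{cond5} for $\ol V$ follow from the structure of $v$, with Assumption~\ref{assumption}\,(iv) preventing degeneracy of $\ol V$ as $q\to0$.

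With $(\ol A,\ol F,\ol V)$ in hand and Assumption~\ref{assump2} checked, \eqref{1.3} has a unique viscosity solution $\ol p$ by Theorem~\ref{T.1}. For the convergence, let $p^{*}$ and $p_{*}$ be the upper and lower half-relaxed limits of $p_\eps$ in space-time; a priori $p_{*}\le p^{*}$, with matching initial data $p^{*}(\cdot,0)=p_{*}(\cdot,0)=\ol p(\cdot,0)$ (from interior homogenization on $\calO$) and uniform spatial confinement of the positivity sets on compact time intervals (from boundedness of the coefficients). Using the interior corrector for the bulk and the auxiliary traversal times for the free boundary --- a perturbed test function construction compatible with the viscosity-flow notion of Definition~\ref{def23} --- one shows that $p^{*}$ is a viscosity subsolution and $p_{*}$ a viscosity supersolution of \eqref{1.3}. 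The comparison principle behind Theorem~\ref{T.1}, applied to \eqref{1.3}, then forces $p^{*}\le p_{*}$, hence $p^{*}=p_{*}=\ol p$, i.e.\ local uniform convergence; the restriction to almost every $t$ accounts for the at most countably many times at which the positivity set can jump. The existence and comparison principle for the $\eps$-problems \eqref{1.1} come from Lemma~\ref{lem:assumption ep} together with Theorem~\ref{T.1}, and $\Sigma_0$ is obtained by intersecting the full-measure sets from the (countably many, rational $x$ and $q$) invocations of the subadditive ergodic theorem with those of the interior Birkhoff averages.

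I expect the principal obstacle to be the design of the auxiliary free boundary problem and the proof that its traversal time is (sub)additive and stationary while still faithfully tracking the true $\eps$-dynamics: because the velocity is slaved to the quasi-static elliptic solution on a moving domain --- rather than prescribed, as in front propagation --- one must show that the interior's readjustment as the free boundary advances, together with the boundary layer at the free boundary, contribute only lower-order corrections to the traversal time, uniformly over realizations. A secondary difficulty is carrying the perturbed test function argument through the free boundary within the viscosity-flow framework, so that the half-relaxed limits are genuine sub- and supersolutions of \eqref{1.3}.
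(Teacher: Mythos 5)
Your skeleton matches the paper's: harmonic-mean interior homogenization giving $\ol A,\ol F$ (your formulas agree with \eqref{eq:barA barF}), an effective velocity obtained by ergodic averaging of front traversal times, verification of Assumption \ref{assump2} so that Theorem \ref{T.1} applies to \eqref{1.3}, and half-relaxed limits plus comparison for the convergence. Your heuristic formula for $\ol V$ is also consistent with the paper's definition. However, the central step is left unresolved, and the route you propose for it has a concrete obstruction that the paper is specifically designed to avoid. You want to apply the subadditive ergodic theorem directly to the traversal time of an auxiliary free boundary problem in which the interior is re-solved as the front advances. That problem (essentially \eqref{eq:motivating pde}) is not known to be well-posed, and even granting solutions, its traversal time is neither additive nor stationary in the traversed interval, because the interior gradient at the front depends nonlocally on the whole domain behind it; you flag this yourself as the ``principal obstacle'' without a mechanism to overcome it. The paper's key move is different: using the explicit one-dimensional formula \eqref{newBD2} for the gradient at the front together with the \emph{uniform} Birkhoff convergence of $\int 1/a$ (Lemma \ref{L.2.2}, proved via Egorov and Wiener's ergodic theorem), it replaces the interior coupling by the closed, local ODE \eqref{FB} $\dot S=q\,\ol a\,b(S,\omega)/a(S,\omega)+g(S,\omega)$, whose arrival times are genuinely additive and stationary, so Kingman's theorem applies cleanly (Lemma \ref{L.4.5}); the effective velocity is then upgraded to convergence uniform over translations $\tau_y\omega$, $|y|\le\Lambda/\eps$ (Proposition \ref{P.2.13}), which is essential because the free boundary of $p_\eps$ sits at an unknown, $\omega$- and $\eps$-dependent location in the fast variable. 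Your plan does not account for this uniformity, and without it the almost-sure statements for the ODE (or for your traversal times) cannot be transferred to the actual fronts $x_\eps(t)$.

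Two further points. First, in the convergence step the paper does not use a perturbed test function with correctors; instead it shows the half-relaxed limit \emph{sets} are a viscosity subflow and superflow (Proposition \ref{P.3.2}) by freezing the slow variable (Lemma \ref{lem for prop 1}, which exploits the explicit 1D solution formula to reduce to constant-in-$x$ coefficients), constructing linear profile barriers, and comparing the true front with the effective ODE through Lemma \ref{lem:prop lemma 2}; your corrector-based sketch would have to confront the degeneracy of the corrector at the free boundary, which is precisely why the paper argues at the level of flows and explicit formulas. Second, your appeal to the comparison principle ``forces $p^*\le p_*$'' is too quick: Theorem \ref{L.cp} requires strictly ordered initial supports, while $\Omega^*(0)$ and $\Omega_*(0)$ both equal $\ol\calO$ up to closure. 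The paper resolves this using the immediate expansion of the support (from strict positivity of $F$) and comparison with the time-dilated superflow $(1+\delta)\Omega_*((1+\delta)t+\delta)$, then letting $\delta\to0$; some such device is needed in your argument as well.
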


\begin{remark}
\begin{enumerate}[(i)]
    \item We use the assumption that $G$ is either  strictly positive or identically 0 to establish the uniqueness of the limiting problem in Theorem \ref{thm:main}. We believe this assumption is technical.

\item The effective coefficients $\ol A(x)$
and $\ol F(x)$ are explicit: see \eqref{eq:barA barF}.

\item Our result on stochastic homogenization implies that the  corresponding periodic homogenization result holds as well, which is  new for Hele-Shaw type problems with both interior and free boundary oscillation.

\end{enumerate}

\end{remark}

\subsection{Strategy of proof}
\label{ss:ideas}
In this subsection we describe the main ideas behind the proofs of our two results,  Theorem \ref{T.1} and Theorem \ref{thm:main}.

\subsubsection{Existence and uniqueness}

The main idea behind our proof of the comparison principle, which is central to how we establish well-posedness for \eqref{main}, is as follows. Suppose that $p_1$ is a subsolution and $p_2$ is a supersolution of \eqref{main} with $\Omega_1(0)\subsetneq \Omega_2(0)$, where
\[
\Omega_i:=\left\{p_i>0\right\}\quad\text{and}\quad \Omega_i(t)=\{p_i(\cdot, t)>0\}.
\]
The goal is to show $\Omega_1(t)\subsetneq\Omega_2(t)$ for all $t\geq 0$.

To show that this holds, we 
assume for contradiction that $\overline\Omega_1\not\subseteq\Omega_2$. Thus, there is a first time $t_0>0$ for which $ \partial\Omega_{1}(t_0)\cap \partial\Omega_{2}(t_0)$ is nonempty.  Let $x_0$ be one attaching free boundary point, i.e. $x_0\in \partial\Omega_{1}(t_0)\cap \partial\Omega_{2}(t_0)$.
On the one hand, since $t_0$ is the time at which  $\Omega_1$ ``caught up" to $\Omega_2$, the free boundary speed of $\Omega_1$ should be no smaller than the one of $\Omega_2$. Consequently,
\beq\lb{V1}
V(x_0,\partial_xp_2(x_0,t_0))\leq V(x_0,\partial_xp_1(x_0,t_0)).
\eeq
On the other hand, since $\Omega_{1}(\cdot)\subseteq\Omega_{2}(\cdot)$ up to time $t_0$, we have $p_1\leq p_2$ up to time $t_0$. Thus, 
\[
|\partial_xp_1(x_0,t_0)|\leq |\partial_xp_2(x_0,t_0)|.
\]
However, by the monotonicity assumption on $V$, we get
\beq\lb{V2}
V(x_0,\partial_xp_2(x_0,t_0))\geq V(x_0,\partial_xp_1(x_0,t_0)) .
\eeq
If one of the inequalities \eqref{V1} and \eqref{V2} were  strict, we would obtain a contradiction. 

This strategy was introduced by Kim in \cite{kim2003} in the setting of the homogeneous Hele-Shaw problem. In order to obtain the desired contradiction, the argument in \cite{kim2003} was carried out not for the sub- and supersolutions themselves, but for their sup- and inf-convolutions. These  regularizations are used throughout the viscosity solutions literature, including in Caffarelli and Salsa's work on regularity in free boundary problems \cite{cbook}. 
In the case when the interior operator is the Laplacian (such as in \cite{kim2003}), the sup‐convolution of a subsolution remains a subsolution (and similarly for inf‐convolution). However, the inhomogeneous elliptic operator in \eqref{main} prevents this property from holding: convolution perturbs the operator’s coefficients and breaks the sub/supersolution structure. Thus, the method of \cite{kim2003} does not carry over directly to our setting.

To overcome this, we focus on the domains $\Omega_i$ rather than the functions $p_i$. Namely, we introduce a new notion of viscosity sub‐ and super-flows: given some space-time region $\Omega$, we solve for the exact solutions to the elliptic equation in its interior at each time ---   the fact that we are in one spatial dimension allows us to avoid regularity issues --- and formulate the free boundary condition in terms of test functions (see Definitions \ref{D.1.2}, \ref{D.1.3}, and \ref{D.1.4}). We establish a comparison principle for sub- and super-flows (Theorem \ref{L.cp}) and use Perron's method to obtain well-posedness (Theorem \ref{T.3.4}).  We also define viscosity solutions to \eqref{main} --- see Definitions \ref{D.1.2'}, \ref{D.1.3'}, and \ref{def23} --- and deduce Theorem \ref{T.1} from our well-posedness result for viscosity flows.

\subsubsection{Homogenization}
\label{sss:intro hom}
The main challenge in proving our result on stochastic homogenization, Theorem \ref{thm:main}, is handling the combination of interior and free boundary oscillation. The most high-level outline of our approach  is similar to that of previous works:  applying the subadditive ergodic theorem to extract the average of an appropriate quantity. However, as we will describe in the literature review, previously-developed techniques don't apply here due to the interior oscillation and the lack of compactness of the random setting. Thus, the  novelty in our work is in how we find the correct subadditive quantity and how we use it to show that homogenization occurs.  

Finding the appropriate subaddive quantity is closely tied to  identifying the effective velocity $\overline V(x_0, q)$ for $x_0, q\in \R$. We will now give the hueristics behind this crucial step. We  consider \eqref{1.1} with  coefficients ``frozen" at $x_0$,
\beq
\label{eq:frozen}
a(x,\omega):=A(x_0,x,\omega),\quad b(y,\omega):=B(x_0,y,\omega)\quad \text{and}\quad g(y,\omega):=G(x_0,y,\omega), 
\eeq
and 
 study solutions $p$ with fixed slope $q$ at $-\infty$ and positivity set $\Omega_p(t)=(-\infty, S(t))$, where $S(0)=x_0$:
\begin{equation}
\label{eq:motivating pde}
\left\{
\begin{aligned}
 &   -\partial_x(a(x,\omega) \partial_x p)=0 &&\text{ in }\{p>0\}=\cup_{t\geq 0}(-\infty,S(t))\times\{t\},\\
&   \frac{d}{dt}S(t) =\frac{\partial_t p}{|\partial_x p|}=b(x,\omega)| \partial_x p|+g(x,\omega)  &&\text{ on }\partial\{p>0\},\\
&\text{$p$ has a slope of $q$ as $x$ becomes negative.}
\end{aligned}   
\right.
\end{equation}
We want to understand the behavior of the free boundary location $S(t)$. However, we cannot work with \eqref{eq:motivating pde} directly, since we do not know whether it is well-posed. Instead, we consider an analogous problem on a finite  domain $(x_0-N, S(t))$, for $N\in [0,\infty)$ and derive an effective ODE for $S(t)$ as $N\rightarrow -\infty$. Namely, we  let $v(\cdot,\omega)=v(\cdot,t,\omega)$ be the solution to
\begin{equation}\lb{newBD}
\left\{
\begin{aligned}
   &-\partial_x(a(x,\omega) \partial_x v(x,\omega))=0\qquad\qquad \text{ for } x\in (x_0-N,S(t)),\\
    &v(x_0-N,\omega)= q(S(t)-x_0+N),\qquad v(S(t),\omega)=0.
\end{aligned}   
\right.
\end{equation}
The boundary conditions ensure that the solution $v(\cdot, t)$ has macroscopic slope $q$ for each $t>0$. 
 A direct computation yields 
an explicit expression for $v_x(S(t),\omega)$: 
\begin{align}
v_x(S(t),\omega)&=-q\left(\frac{1}{S(t)-x_0+N}\int_{x_0-N}^{S(t)}\frac{1}{a(y,\omega)}dy\right)^{-1}\frac{1}{a(S(t),\omega)}.\lb{newBD2} 
\end{align}
The Birkhoff--Khinchin Theorem implies that the quantity in the parenthesis on the righthand side of \eqref{newBD2} converges to a deterministic quantity: more precisely, 
\beq\lb{new2.10}
\lim_{N\to \infty}\frac{1}{S(t)-x_0+N}\int_{x_0-N}^{S(t)}\frac{1}{a(y,\omega)}dy=\frac1{\ol a},
\eeq
where $\ol a$ is a constant that is independent of  $S(t)$ and $\omega$. 
(Moreover, in Lemma \ref{L.2.2} we establish that this convergence is uniform in the appropriate sense,  which will be crucial in the proof of existence of the deterministic homogenized boundary velocity.) 
Thus, for $N$ large, we expect,
\[
v_x(S(t),\omega) \approx -q\frac{\bar a}{a(S(t),\omega)}.
\]
Therefore, heuristically, the free boundary condition 
yields,
\[
\frac{d}{dt} S(t)=b(S(t),\omega)| \partial_x v(S(t),t, \omega)|+g(S(t),\omega)\approx q \frac{\bar a }{a(S(t),\omega)}b\left(S(t, \omega)\right)+g(S(t),\omega).
\]
Thus, we expect that  the effective location of the free boundary point  should obey
\[
\frac{d}{dt}S(t, \omega) = q\frac{\bar a }{a(S(t),\omega)}b\left(S(t, \omega)\right)+ g\left(S(t, \omega)\right),\quad S(0)=x_0.
\]
This is the effective ODE that we are seeking. 
We prove that the arrival time associated with this ODE is subadditive  and apply the subadditive ergodic theorem to obtain the homogenized arrival time (Lemma \ref{L.4.5}), which we then use to define the effective velocity $\overline V (x_0, q)$ (Corollary \ref{P.2.12}). We remark that in this argument we are relying on the fact that the problem is set in one spatial dimension.

Once $\overline V$ is defined, we have to prove that solutions $p_\ep$ of \eqref{1.1} converge to those of \eqref{1.3}. To this end, we   show that the supports $\Omega_{p_*}$ and $\Omega_{p^*}$  of
\[
p_{* }(x,t,\omega):=\liminf_{\eps\to 0,y\to x,s\to t}p_{\eps}(y,s,\omega) \quad \text{ and }\quad p^{* }(x,t,\omega):=\limsup_{\eps\to 0,y\to x,s\to t}p_{\eps}(y,s,\omega)
\]
are, respectively,  a superflow and a subflow of the effective equation \eqref{1.3}.  
To verify that $\Omega_{p_*}$ satisfies the definition of viscosity superflow (the argument for $\Omega_{p^*}$ is analogous), suppose that $(x_0,t_0)$ is a right-hand side free boundary point and $\varphi(x,t)$ is a smooth function that touches $p_* $ from below at $(x_0,t_0)$ locally for $t\leq t_0$. Let $q_0:=-\partial_x\varphi(x_0,t_0)>0$ and $r_0:=\partial_t\varphi(x_0,t_0)/q_0$. Assume for contradiction that
\beq\lb{intocon}
{r_0}<\ol V(x_0,-q_0).
\eeq
Since $\varphi$ is below $p_*$, we get $ q_0\leq |(p_*)_x|(x_0,t_0)$. Also, the free boundary of $\varphi$ moves no slower than that of $p_*$ and so
\beq\lb{fasterspeed}
{r_0}\geq (p_*)_t(x_0,t_0)/{|(p_*)_x|}(x_0,t_0).
\eeq
However, using the definition of $\overline V$ and its properties, we find,
\beq\lb{effective}
\text{the average of }\frac{(p_\eps)_t}{|(p_\eps)_x|}(x_0,t_0)\approx \ol V(x_0,-(p_*)_x).
\eeq
It follows that $\frac{(p_*)_t}{|(p_*)_x|}(x_0,t_0)= \ol V(x_0,-(p_*)_x)$. Then, by \eqref{intocon} and  \eqref{fasterspeed}, we obtain
\[
\ol V(x_0,-q_0)>r_0\geq \frac{(p_*)_t}{|(p_*)_x|}(x_0,t_0)= \ol V(x_0,-(p_*)_x),
\]
which contradicts with $q_0\leq |(p_*)_x|(x_0,t_0)$ due to the monotonicity  of $\ol V$.

Once we know that $\Omega_{p^*}$ and $\Omega_{p_*}$ are a subflow and superflow, the comparison principle for flows implies $\Omega_{p^*}\subseteq \Omega_{p_*}$. Using this, together with the fact that $\Omega_{p_*}\subseteq \Omega_{p^*}$ (which holds by definition),  allows us to conclude that $\Omega_{p^*}= \Omega_{p_*}$ is a viscosity flow, as desired. The rigorous proof is presented in Section \ref{S.5}.

\subsection{Literature review}
\label{ss:lit}

The study of well-posedness of the classical Hele-Shaw flow, that is, equation \eqref{1.1} with $A=B\equiv 1$, $V(x,q)=q$, and $G=F\equiv 0$ in general spatial dimension, was begun by  Elliot and Janovsky \cite{EJ}, who showed the well-posedness of weak solutions in $H^1$.
Escher and Simonett \cite{escher1997classical} obtained short-time existence results for classical solutions. Later, Kim \cite{kim2003} proved the well-posedness of viscosity solutions. More recently, Schwab, Tu, and Turanova \cite{schwab2024well} obtained the well-posedness of the Hele-Shaw problem with gravity (otherwise known as the one-phase Muskat problem) in all spatial dimensions in the graph setting. We refer the reader to these works for further literature on well-posedness.

As for regularity, Choi, Jerison, and Kim \cite{CJK,choi2009local} showed that if the initial free boundary for the Hele-Shaw flow is Lipschitz with small Lipschitz constant then the free boundary becomes instantly smooth. Dong, Gancedo, and Nguyen \cite{Hongjie,dong23} obtained global-in-time regularity for the one-phase Muskat problem in the graph setting in spatial dimension $2$ and $3$. 
Figalli, Ros-Oton, and Serra  \cite{figalli2020generic}  used the connection of the Hele-Shaw flow to the obstacle problem to establish a generic regularity result (i.e. an estimate that shows that the singular set of the free boundary is small) in spatial dimension 2. 
More recently, Kim and Zhang \cite{kim2024regularity} proved that, for Hele-Shaw flows with source and drift terms, flat free boundaries are actually Lipschitz.

Periodic homogenization for \eqref{1.1} with $A\equiv 1$ and $F\equiv 0$ in general spatial dimension has been studied by Kim in \cite{homofb,kim2012homogenization,kim2009error}. The case $G\equiv 0$ and $B$  strictly positive was addressed in \cite{homofb}.  The works  \cite{kim2012homogenization,kim2009error} addressed homogenization in the presence  of potentially negative boundary velocity and established the rate of convergence.  In contrast with our setting,   \cite{homofb,kim2012homogenization,kim2009error} concerned problems with oscillation  on the free boundary but not in the interior of the positivity set.
More importantly,  the periodicity in \cite{homofb,kim2012homogenization,kim2009error} provided an important compactness property that is absent in the random setting that we consider, necessitating quite different techniques. This is also one of the main reason that our result  only holds for spatial dimension one.

The most closely-related existing work on stochastic homogenization is \cite{KimMellet2009} by Kim and Mellet, which addressed the homogenization of the Hele-Shaw problem in the case $A\equiv 1$ and $F\equiv 0$ and in general spatial dimension, with both periodic and random coefficients. Their strategy relied on transforming the Hele-Shaw flow into an obstacle problem and applying homogenization techniques for the obstacle problem. This approach does not apply in our setting because the transformation breaks down in the presence of inhomogeneity within the positivity set.  
We also mention that there is a vast literature on   stochastic homogenization  for elliptic equations, beginning with the work \cite{PV1981} of Papanicolaou and Varadhan; see \cite{armstrong2014regularity,bella2017stochastic,gu2017high} and the references therein.

\subsection{Applications and perspectives: models of tissue growth}\lb{ss1.3}
The problem \eqref{1.1} is closely related to the   inhomogeneous porous medium equation,
\begin{align} \label{eqn: PME}
         \partial _t u_k = \nabla \cdot \left( \alpha(x)u_k \nabla p_k \right)  + u_k(1-p_k) \text{ on }\R^d\times (0,\infty),
\end{align}
where the pressure $p_k$ is given in terms of the density $u_k$ by the constitutive law,
\begin{equation*}
p_k = \frac{k}{k-1} \left( \frac{u_k }{\beta(x)} \right) ^ {k-1}, \quad k\geq 2.
\end{equation*}
This equation models the spread of cancer tumor in a heterogeneous environment. The term $u_k(1-p_k)$ represents that the tumor's growth is constrained by the pressure within the tissue, and the coefficients $\alpha$ and $\beta$ represent heterogeneity in the underlying medium and in the cellular packing density, respectively. 
In \cite{SulakTuranova}, it is established that  as $k\rightarrow+\infty$ (called the ``incompressible" or ``stiff pressure" limit), the pressure $p_k$  converges to a solution  of a weak formulation of a free boundary problem of Hele-Shaw type,
\[
\left\{
\begin{aligned}
    &-\nabla\cdot\left(\alpha(x)\beta(x)\nabla p_\infty\right) =  \beta(x) (1-p_\infty)  \quad &&\text{ in }\{p_\infty >0\},\\
    &\partial_tp_\infty=\alpha(x)|\nabla p_\infty|^2  \quad &&\text{ on }\partial\{p_\infty >0\}.
\end{aligned}
\right.
\]
The link between the porous medium equation and the Hele-Shaw problem via the incompressible limit  was first established in  \cite{CaffarelliFriedman1987, GQ, PQV} and has since been extended to a variety of contexts.  
This connection, in the case of \eqref{eqn: PME}, inspired our study of \eqref{1.1}. Furthermore,  stochastic homogenization for a porous medium-type equation was established in \cite{patrizi2022stochastic}. We leave for future work the question of possible interactions between homogenization and the incompressible limit.

\subsection{Outline}
Section \ref{ss:viscosity} is devoted to the definitions of and lemmas about viscosity solutions and viscosity flows. In Section \ref{ss:comparison}, we prove the comparison principle for viscosity flows, from which well-posedness (Theorem \ref{T.1}) follows as a corollary. Section \ref{S.random} concerns the random environment and presents preliminary lemmas for stochastic homogenization. In Section \ref{S.4}, we introduce the effective equation and determine the effective velocity. Finally, Section \ref{S.5} is devoted to the proof our second main result, Theorem \ref{thm:main}, on stochastic homogenization.

\subsection{Acknowledgements}
 O. Turanova acknowledges support from NSF grant DMS-2204722. Y. P. Zhang acknowledges support from NSF CAREER grant DMS-2440215 and Simons Foundation Travel Support MPS-TSM-00007305.

\section{Viscosity flows and viscosity solutions}\label{ss:viscosity}
We will now define viscosity solutions (Section \ref{ss:visc soln def}) and viscosity flows (Section \ref{s.2.2}). Then, in Section \ref{ss:regular property}, we define the notion of regular boundary points. We  establish that at regular points of sub- and super-flows, the appropriate inequality holds in the free boundary condition   (Proposition \ref{P.3.9}). 
The definitions, as well as  the comparison, existence, and uniqueness results of the next section, will apply both to the microscopic scale equation,
\eqref{1.1}, for each fixed $\omega\in \Sigma$, as well as to the macroscopic problem \eqref{1.3}. 
Unlike \cite{kim2003}, our approach is to define viscosity flows, which give rise to viscosity solutions.  The target in our definition is  space-time sets instead of functions.

\subsection{Viscosity solutions}
\label{ss:visc soln def}
We will now make precise the notion of viscosity solution to the generic problem \eqref{main}.
Let $D\subset\R$ be open and let $T>0$.

\begin{definition}[Viscosity subsolution]\lb{D.1.2'}
A non-negative upper semicontinuous function $p$ defined in $D\times(0,T)$ is a viscosity subsolution of \eqref{main} if for every $\phi\in C^{2,1}_{x,t}(D\times (0,T))$ such that $p-\phi$ has a local maximum in $\left(\overline{\Omega_p}\cap\{t\leq t_0\}\right)\cap\left(D\times (0,T)\right)$ at $(x_0,t_0)$, we have:
\begin{itemize}
    \item If $(x_0,t_0)\in \Omega_p$, then $
    -(A \phi_x)_x(x_0,t_0)\leq F(x_0)$ holds.
    \item If $(x_0,t_0)\in\Gamma_p$, then either 
    \begin{align*}
    |\phi_x(x_0,t_0)|&= 0, \quad\text{or}\\
\phi_t(x_0,t_0)-V(x_0,\phi_x(x_0,t_0))|\phi_x(x_0,t_0)|&\leq 0.
        \end{align*}
\end{itemize}
\end{definition}

\begin{definition}[Viscosity supersolution]\lb{D.1.3'}
A non-negative lower semicontinuous function $p$ defined in $D\times (0,T)$ is a {\it viscosity supersolution} of \eqref{main} if for every $\phi\in C^{2,1}_{x,t}(D\times (0,T))$ such that $p-\phi$ has a local minimum in $D\times\{t\leq t_0\}$ at $(x_0,t_0)$, then
\begin{itemize}
    \item If $(x_0,t_0)\in \Omega_p$, then $
    -(A \phi_x)_x(x_0,t_0)\geq F(x_0)$ holds.
    \item If $(x_0,t_0)\in\Gamma_p$, then either 
    \begin{align*}
    |\phi_x(x_0,t_0)|&= 0, \quad\text{or}\\
\phi_t(x_0,t_0)-V(x_0,\phi_x(x_0,t_0))|\phi_x(x_0,t_0)|&\geq 0.
        \end{align*}
\end{itemize}
\end{definition}

\begin{remark}
In the literature, viscosity supersolutions are often defined so that if $(x_0,t_0)\in\Gamma_p$, then either 
\beq\lb{olddef}
    \begin{aligned}
-(A\phi_x)_x(x_0,t_0)&\geq F(x_0), \quad \text{or}\\
    |\phi_x(x_0,t_0)|&= 0, \quad\text{or}\\
\phi_t(x_0,t_0)-V(x_0,\phi_x(x_0,t_0))|\phi_x(x_0,t_0)|&\geq 0.
        \end{aligned}
\eeq
Similarly, the condition $-(A\phi_x)_x(x_0,t_0)\leq F(x_0)$ appears in the definition of viscosity subsolution. We removed them in Definitions \ref{D.1.3} and \ref{D.1.3'}, because, as we will now demonstrate, they are actually unnecessary.  

Indeed, suppose that $p-\phi$ has a local minimum in $\{t\leq 0\}$ at $(0,0)\in\Gamma_p$ and \eqref{olddef} holds with $x_0=t_0=0$. 
Without loss of generality, we assume that $0$ is a right-hand side free boundary point of $\{p(0,\cdot)>0\}$.
Let us show that it is not possible that
\[
    -(A\phi_x)_x(0,0)\geq F(0),
\]
but, at the same time,
\[
    |\phi_x(0,0)|> 0\quad\text{and}\quad
\phi_t(0,0)-V(0,\phi_x(0,0))|\phi_x(0,0)|< 0.
\]
Suppose for contradiction that the above three inequalities hold. Since $0$ is a right-hand side free boundary, we have $\phi_x(0,0)<0$.
Let us take
\[
\tilde\phi(x,t):=\phi(x,t)+\eps t+\eps x+\eps^{-1}x^2.
\]
Since $A(0)>0$, $A\in C^1$, and $\phi$ is smooth,  we can take $\eps>0$ to be sufficiently small such that
\beq\lb{c1}
-(A\tilde\phi_x)_x(0,0)=-(A\phi_x)_x(0,0)-\eps A_x(0)-2\eps^{-1}A(0)< F(0),
\eeq
\beq\lb{c2}
   \tilde\phi_x(0,0)< 0\quad\text{and}\quad
\tilde\phi_t(0,0)-V(0,\tilde \phi_x(0,0))|\phi_x(0,0)|< 0.
\eeq

Next, the definition of $\tilde\phi$ implies  $\tilde\phi(x,t)\leq \phi(x,t)$ for all $t<0$ and for $-\eps^2<x<0$. Further, $\phi_x(0,0)<0$ implies that $\tilde \phi(x,t)\leq 0$ when $t<0$ and $x>0$ is sufficiently small. Putting this together with the fact that $p(\cdot,t)$ is supported inside $\{x<0\}$ when $t\leq 0$, we find that $p-\tilde \phi$ has a local minimum in $\{t\leq 0\}$ at $(0,0)\in\Gamma_p$. However, this, \eqref{c1}, and \eqref{c2} contradicts with the assumption that \eqref{olddef} holds. So, indeed, we can remove the condition $-(A\phi_x)_x(x_0,t_0)\leq F(x_0)$ from the definition of viscosity supersolution.

\end{remark}

\begin{remark}[Our definition vis-\`{a}-vis that of \cite{kim2003}]
\label{rem:def kim}
We note that, in the definition of viscosity sub- and super-solutions in   \cite{kim2003}, the condition ``$|\phi_x(x_0,t_0)|= 0$'' is excluded. However, a parallel argument as in \cite[Lemma 2.5]{kim2} yields that the requirement at the free boundary in Definitions \ref{D.1.2'} and \ref{D.1.3'} can be simplified for testing against functions with nonzero gradient. 
\end{remark}

For a function $p(x,t)$,
define the half-relaxed limits,
\beq\lb{half}
p^\# (x,t):=\limsup_{y\to x,\,s\to t}p(y,s)\quad\text{and}\quad p_{\# }(x,t):=\liminf_{y\to x,\,s\to t}p(y,s).
\eeq
Often, our solutions $p(\cdot,t)$ are given by solving elliptic equations in open subsets of $\bbR$, in which case they are uniformly H\"{o}lder continuous in space, and we have,
\[
p^\# (x,t)=\limsup_{s\to t}p(x,s)\quad\text{and}\quad p_{\# }(x,t)=\liminf_{s\to t}p(x,s).
\]

\begin{definition}[Viscosity solution]\lb{def23}
We say that a non-negative function $p$ is a {\it viscosity solution} of \eqref{main} if $p^\# $ is a viscosity subsolution of \eqref{1.1} and $p=p_\# $ is a viscosity supersolution of \eqref{main}.
\end{definition}

\subsection{Viscosity flows}\lb{s.2.2}

We start with a number of definitions. Recall that, for a space-time set $\Omega\subseteq \R^2$, we denote $\Omega(t)$ as the time slice at $t$. 

\begin{definition}[Admissible]\lb{D.ad}
We  call a set $\Omega$ {\it admissible} if it is bounded, and $\Omega(\cdot)$ is non-decreasing in Hausdorff distance, and $\Omega(t)$ is an open set for each $t$.
\end{definition}

\begin{definition}[Semicontinuity]
We say that $\Omega$ is lower semicontinuous if $\Omega(t)$ is lower semicontinuous in Hausdorff distance; we say that $\Omega$ is upper semicontinuous if $\Omega(t)$ is upper semicontinuous in Hausdorff distance.
\end{definition}

For any space time set $\Omega$, we define $\Omega_\#$ by 
\[
\Omega_\#(t):=\liminf_{s\to t}\Omega(s)=\bigcup_{c\to 0}\bigcap_{|s-t|<c}\Omega(s),
\]
and $\Omega^\#$ by 
\[
\Omega^\#(t):=\limsup_{s\to t}\Omega(s)=\bigcap_{c\to 0}\bigcup_{|s-t|<c}\Omega(s).
\]
Then $\Omega_\#$ is lower semicontinuous and  $\Omega^\#$ is upper semicontinuous.
\begin{definition} [Associated functions]  
We let $p(\cdot,t)$ be the unique solution to the following elliptic equation,
\begin{equation}
    \label{eq:associated}
-\partial_x(A(x) \partial_x p)=F(x)\quad\text{ in }\Omega(t)
\end{equation}
with $0$ boundary data. 
We say that $p$ is {\it associated} with the set $\Omega$. 
\end{definition}

It is easy to see that if $\Omega$ is lower or upper semicontinuous, then the corresponding function $p$ is also lower or upper semicontinuous.

In the case of space dimension $1$, since $\Omega(t)$ is bounded, we immediately have that $p(\cdot,t)$ is Lipschitz continuous for each $t$.

\begin{definition}[Viscosity subflow]\lb{D.1.2}
A space-time upper semicontinuous set $\Omega $ is said to be a viscosity subflow of \eqref{main} in $\R\times(0,T)$ if the following holds  with $p$ associated with $\Omega$. 
For every $\phi\in C^{2,1}_{x,t}(\R\times (0,T))$ such that $p-\phi$ has a local maximum in $\overline\Omega\times\{t\leq t_0\}$   
at $(x_0,t_0)\in \partial\Omega$, then  one of the following holds:
\begin{align*}
    |\phi_x(x_0,t_0)|&= 0, \quad\text{or}\\
\phi_t(x_0,t_0)-V(x_0,\phi_x(x_0,t_0))|\phi_x(x_0,t_0)|&\leq 0.
\end{align*}
\end{definition}

\begin{definition}[Viscosity superflow]\lb{D.1.3}
A space-time lower semicontinuous set $\Omega $ is said to be a viscosity superflow of \eqref{main} in $\R\times(0,T)$ if the following holds with $p$ associated with $\Omega$. 
For every $\phi\in C^{2,1}_{x,t}(\R\times (0,T))$ such that $p-\phi$ has a local minimum in $\R\times\{t\leq t_0\}$  
at $(x_0,t_0)\in \partial\Omega$, then  one of the following holds:
\begin{align*}
    |\phi_x(x_0,t_0)|&= 0, \quad\text{or}\\
\phi_t(x_0,t_0)-V(x_0,\phi_x(x_0,t_0))|\phi_x(x_0,t_0)|&\geq 0.
\end{align*}
\end{definition}

\begin{definition}[Viscosity flow]\lb{D.1.4}
We say that a space-time set $\Omega$ is a {\it viscosity flow} of \eqref{main} if it is a viscosity subflow and  $\Omega_\#$ a viscosity superflow of \eqref{main}.
\end{definition}

The following properties follow from the definitions.
\begin{lemma}\lb{L.def}
Given a viscosity subflow (resp. superflow) of \eqref{main}, its associated function is a viscosity subsolution (resp. supersolution). In particular, the associated function of a viscosity flow is a viscosity solution.

On the other hand, if $p$ is a viscosity solution to \eqref{main}, then its positivity set $\Omega$ satisfies that $\Omega^\#$ is a viscosity flow.
\end{lemma}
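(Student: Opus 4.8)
\textbf{Proof proposal for Lemma \ref{L.def}.}

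The plan is to translate, line by line, the defining test-function conditions for viscosity sub-/super-\emph{flows} into those for viscosity sub-/super-\emph{solutions}, and vice versa, exploiting that in one spatial dimension the associated function $p(\cdot,t)$ is the \emph{exact} solution of the elliptic equation $-\partial_x(A\partial_x p)=F$ on $\Omega(t)$ with zero boundary data. First I would handle the forward direction. Let $\Omega$ be a viscosity subflow and $p$ its associated function; I must check the two bullet points of Definition \ref{D.1.2'}. If $(x_0,t_0)\in\Omega_p$, then since $p(\cdot,t_0)$ solves the elliptic PDE \emph{classically} in the open set $\Omega(t_0)$ (here the one-dimensional setting and the $C^{1+\sigma}$ regularity of $A$ guarantee $p(\cdot,t_0)\in C^2$ near $x_0$), any $\phi\in C^{2,1}$ touching $p$ from above at an interior point satisfies $\phi_x(x_0,t_0)=p_x(x_0,t_0)$ and $\phi_{xx}(x_0,t_0)\le p_{xx}(x_0,t_0)$, hence $-(A\phi_x)_x(x_0,t_0)=-(A\phi_{xx}+A_x\phi_x)(x_0,t_0)\ge -(A p_{xx}+A_x p_x)(x_0,t_0)=F(x_0)$ --- wait, the inequality goes the wrong way for a subsolution, so here I should instead note that at an interior \emph{strict} touching point equality of second derivatives can be forced by the usual trick (subtract a small multiple of $(x-x_0)^2$, or observe that both $p-\phi$ and $\phi-p$ being locally extremal is impossible unless we perturb); the clean statement is that any interior test function touching $p$ from above from within $\overline{\Omega_p}\cap\{t\le t_0\}$ forces $-(A\phi_x)_x(x_0,t_0)\le F(x_0)$ because we may replace $\phi$ by $\phi-\delta(x-x_0)^2$ and let $\delta\downarrow 0$. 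If instead $(x_0,t_0)\in\Gamma_p$, then $(x_0,t_0)\in\partial\Omega$ (the positivity set of $p$ is exactly $\Omega$), and since $\overline{\Omega_p}\cap\{t\le t_0\}=\overline\Omega\cap\{t\le t_0\}$ (using that $\Omega$ is upper semicontinuous), the local maximum of $p-\phi$ at $(x_0,t_0)$ is precisely the hypothesis of Definition \ref{D.1.2}, which yields exactly the required dichotomy $|\phi_x(x_0,t_0)|=0$ or $\phi_t-V|\phi_x|\le 0$ at $(x_0,t_0)$. The supersolution direction for a superflow $\Omega_\#$ is verbatim the same, with inequalities reversed and Definition \ref{D.1.3} replacing Definition \ref{D.1.2}; note that the associated function of $\Omega_\#$ is lower semicontinuous since $\Omega_\#$ is, matching Definition \ref{D.1.3'}. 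Combining the two gives the ``in particular'' clause: the associated function of a viscosity flow is a subsolution (from the subflow) and its lower-semicontinuous representative is a supersolution (from $\Omega_\#$), hence a viscosity solution in the sense of Definition \ref{def23}.

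For the converse, let $p$ be a viscosity solution with positivity set $\Omega=\Omega_p$; I must show $\Omega^\#$ is a viscosity flow, i.e. $\Omega^\#$ is a subflow and $(\Omega^\#)_\#$ is a superflow. The first key observation is that the function associated to $\Omega^\#$ coincides (up to semicontinuous envelopes) with $p^\#$: since $\Omega^\#(t)=\limsup_{s\to t}\Omega(s)$ and the elliptic solve is monotone and continuous under Hausdorff convergence of the (bounded, one-dimensional) domains, the associated function of $\Omega^\#$ is the upper-semicontinuous envelope $p^\#$, which Definition \ref{def23} tells us is a viscosity subsolution of \eqref{main}. Then any $\phi\in C^{2,1}$ with $p^\#-\phi$ attaining a local max over $\overline{\Omega^\#}\cap\{t\le t_0\}$ at a boundary point $(x_0,t_0)\in\partial\Omega^\#=\Gamma_{p^\#}$ triggers the free-boundary bullet of Definition \ref{D.1.2'}, giving exactly the subflow condition. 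Symmetrically, $(\Omega^\#)_\#$ has associated function $(p^\#)_\#=p_\#=p$ (the last equality because $p=p_\#$ by definition of viscosity solution), which is a supersolution, so the superflow condition for $(\Omega^\#)_\#$ follows from the free-boundary bullet of Definition \ref{D.1.3'}. Hence $\Omega^\#$ satisfies Definition \ref{D.1.4}.

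The main obstacle I anticipate is bookkeeping the matching between the various domains of testing and the semicontinuity envelopes: one must verify carefully that $\overline{\Omega_p}\cap\{t\le t_0\}$ (appearing in the flow definition) agrees with the sets appearing in the solution definition, that the associated function of $\Omega^\#$ is genuinely $p^\#$ and not merely comparable to it, and that a ``local maximum of $p-\phi$ in $\overline\Omega\cap\{t\le t_0\}$'' is equivalent to the corresponding condition phrased for $\Omega_p$. The interior case also requires the small technical point that one can always arrange equality of second-order terms (via the $-\delta(x-x_0)^2$ perturbation) so that the one-sided classical solvability of the elliptic equation delivers both $\le$ and $\ge$ as needed; this is routine but must be stated. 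The genuinely substantive inputs --- one-dimensionality (giving Lipschitz/$C^2$ regularity and explicit solvability), $C^{1+\sigma}$ regularity of $A$, and continuity of the elliptic solve under Hausdorff perturbation of the domain --- are all available from the setup in Section \ref{ss:viscosity}, so no new analysis is required beyond these reductions.
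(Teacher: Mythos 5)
Your overall route is the same as the paper's: the paper's proof is essentially ``unravel the definitions,'' using that in one dimension the associated function is the exact elliptic solution (and, for the converse, that a viscosity solution is the associated function of its own positivity set), and your matching of the test-function conditions for flows against those for solutions, together with the identification $\Omega_p=\Omega$ (valid since $F>0$) and the identification of the associated function of $\Omega^\#$ with $p^\#$, is the same unraveling carried out in more detail; on those points your level of justification is comparable to the paper's one-line argument.

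There is, however, one concretely wrong step in your interior case. At a local maximum of $p-\phi$ (with $x_0$ interior to $\Omega_p(t_0)$, so the spatial slice gives an unconstrained maximum in $x$) one has $\phi_x(x_0,t_0)=p_x(x_0,t_0)$ and $\phi_{xx}(x_0,t_0)\ge p_{xx}(x_0,t_0)$, not $\le$ as you wrote; since $A>0$ this gives directly
\[
-(A\phi_x)_x(x_0,t_0)=-A\phi_{xx}-A_x\phi_x\le -Ap_{xx}-A_xp_x=F(x_0),
\]
which is exactly the subsolution inequality, so your ``the inequality goes the wrong way'' worry is a sign slip and no repair is needed. Moreover, the repair you propose would fail as stated: replacing $\phi$ by $\phi-\delta(x-x_0)^2$ does not preserve the local maximum of $p-\phi$ (you may only add a nonnegative penalty such as $+\delta(x-x_0)^2$ to $\phi$, which is harmless but also unnecessary here). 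Delete that detour and use the direct computation above (and its mirror image, $\phi_{xx}\le p_{xx}$ at a local minimum, for the supersolution case); with that correction the argument goes through and matches the paper's proof.
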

\begin{proof}
The claims mainly follow from the unraveling the definitions. 
Let us only prove the last statement. Because $p$ is a viscosity solution of \eqref{main}, for each $t$, it is a viscosity solution to the elliptic equation in its positivity set. So, $p$ is the associated function to its own positivity set. The conclusion then follows immediately from the definitions. 
\end{proof}

Because of the lemma, to find a viscosity solution to \eqref{main}, it suffices to look for a viscosity flow. 
However, we comment that the positivity set of a 
viscosity subsolution (resp. supersolution) might not be a viscosity subflow (resp. superflow).

\medskip

We will also need the following definition. 
\begin{definition}[Strictly separated]\lb{D.1.5}
Let $D\subset \R$. 
We say that a pair of functions $p_1,p_2:\overline{D}\to [0,\infty)$ are strictly separated (denoted by $p_1\prec p_2$) in $D$  if $\overline\Omega_{p_1}\subseteq {\rm Int }\,\Omega_{p_2}$, and $p_1(x)<p_2(x)$ in $\overline{\Omega_{p_1}}\cap\overline{D}$.

\end{definition}
This says that the supports of the two functions are separated and that, in the support of the smaller function, the two functions are strictly ordered.

\subsection{Properties of regular free boundary points}
\label{ss:regular property}
We now establish a property of sub and superflows that we'll use in our proof of comparison in the next section. 
\begin{definition}[Regular free boundary points]
We say that a boundary point $(x_0,t_0)\in\partial\Omega$ with $t_0\in (0,T)$ is {\it regular from the interior} if there exists a space-time ball $B\subseteq \Omega$ such that $\overline\Omega\cap \overline B=\{(x_0,t_0)\}$; we say it is {\it regular from the exterior} if there exists a space-time ball $B\subseteq \Omega^c$ such that $\overline\Omega\cap \overline B=\{(x_0,t_0)\}$.
\end{definition}

\begin{proposition}[Properties of regular boundary points] \lb{P.3.9}
Suppose that $\Omega$ is a viscosity subflow, and $(x_0,t_0)\in\partial\Omega$ is regular from the exterior, and the tangent line of the exterior ball is given by $ (t-t_0)-k(x-x_0)=0$ for some $k\geq 0$. Let $p$ be associated with $\Omega$. Then, $k>0$, and for $\alpha:=-p_x(x_0,t_0)$, 
\[
V(x_0,-\alpha)\geq 1/k.
\]

Suppose that $\Omega$ is a viscosity superflow, and $(x_0,t_0)\in\partial\Omega$ is regular from the interior, and the tangent line of the interior ball is given by $\beta (t-t_0)-(x-x_0)=0$ for some $\beta\geq 0$. Let $p$ be associated with $\Omega$. Then, 
for $\alpha:=-p_x(x_0,t_0)$, 
\[
V(x_0,-\alpha)\leq \beta.
\]
\end{proposition}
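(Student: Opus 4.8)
The plan is to prove the subflow statement in detail and then indicate how the superflow statement follows by an entirely symmetric argument; in fact only the subflow case needs real work, and the superflow case is handled by exchanging the roles of $\Omega$ and its complement, sub- for super-, $\le$ for $\ge$, and exterior-ball for interior-ball tangencies. So fix a viscosity subflow $\Omega$, let $(x_0,t_0)\in\partial\Omega$ be regular from the exterior with exterior ball $B\subseteq \Omega^c$ tangent along $(t-t_0)-k(x-x_0)=0$, $k\ge 0$, and let $p$ be associated with $\Omega$. First I would fix the geometric picture: since $\overline\Omega\cap\overline B=\{(x_0,t_0)\}$, the set $\Omega$ sits (locally, for $t\le t_0$) on one side of the tangent line, and because $\Omega(\cdot)$ is non-decreasing, $(x_0,t_0)$ is, say, a right-hand free boundary point of $\Omega(t_0)$ (the left-hand case is the mirror image). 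The key quantitative input is that $p(\cdot,t_0)$ solves $-(A p_x)_x=F$ in $\Omega(t_0)$ with zero boundary data and is Lipschitz, with $\alpha:=-p_x(x_0,t_0)$; I would first argue $\alpha>0$. This follows from the Hopf-type lemma for the one-dimensional operator: near $x_0$ the domain is an interval with right endpoint $x_0$, $F>0$ and $A$ bounded, so $p$ is strictly decreasing at $x_0$ from the left, giving $-p_x(x_0,t_0)>0$; in one dimension this is just the elementary computation $p_x(x_0,t_0)=-\tfrac1{A(x_0)}\int F$ against the appropriate Green's function, which is strictly negative.

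Next I would construct a test function $\phi$ touching $p$ from above at $(x_0,t_0)$ inside $\overline\Omega\cap\{t\le t_0\}$, designed so that its free boundary moves exactly along the boundary of the exterior ball. Concretely: the exterior ball $B$ has a lower-left boundary arc near $(x_0,t_0)$ which, to first order, is the line $t-t_0=k(x-x_0)$; I would take $\phi$ to be a smooth function which is $0$ on (a slightly perturbed, strictly convex-from-outside version of) $\partial B$, positive on the $\Omega$-side, with $\phi_x(x_0,t_0)=-\alpha<0$ and with the zero level set of $\phi(\cdot,t)$ moving through $x_0$ at time $t_0$ with the speed dictated by the ball, namely $1/k$ if $k>0$. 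Writing the free boundary condition for $\phi$ at $(x_0,t_0)$: the normal speed of $\{\phi(\cdot,t)=0\}$ equals $\phi_t/|\phi_x|$, and by construction this is $1/k$. One must of course first dispose of the degenerate possibility $k=0$: if $k=0$ the tangent line is vertical, meaning the exterior ball forces the free boundary of $\Omega$ to move with infinite (or unbounded) speed at $(x_0,t_0)$ from outside; but then I can choose test functions whose free boundary speed is arbitrarily large and still touch from above, and since $V(x_0,-\alpha)$ is a fixed finite number with $|\phi_x|=\alpha>0$, the subflow inequality $\phi_t-V(x_0,\phi_x)|\phi_x|\le 0$ is violated — contradiction. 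Hence $k>0$.

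With $k>0$, applying Definition \ref{D.1.2} to the test function $\phi$ gives: either $|\phi_x(x_0,t_0)|=0$, which is excluded since $|\phi_x(x_0,t_0)|=\alpha>0$, or
\[
\phi_t(x_0,t_0)-V(x_0,\phi_x(x_0,t_0))\,|\phi_x(x_0,t_0)|\le 0,
\]
i.e. $\tfrac1k\cdot\alpha - V(x_0,-\alpha)\,\alpha\le 0$, which upon dividing by $\alpha>0$ yields $V(x_0,-\alpha)\ge 1/k$, as claimed. For the superflow statement, the same scheme applies with the interior ball $B\subseteq\Omega$ tangent along $\beta(t-t_0)-(x-x_0)=0$: one builds a test function touching $p$ from below whose free boundary moves along $\partial B$ with speed $\beta$ (here $\beta=0$ is allowed and corresponds to a spatially-vertical tangent, i.e. a stationary free boundary from the inside), the nonzero-gradient alternative again applies since $\alpha=-p_x(x_0,t_0)>0$ by the Hopf lemma, and the superflow inequality $\phi_t-V(x_0,\phi_x)|\phi_x|\ge 0$ becomes $\beta\alpha - V(x_0,-\alpha)\alpha\ge 0$, giving $V(x_0,-\alpha)\le\beta$.

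The main obstacle is the careful construction of the test function $\phi$: it must (i) be genuinely $C^{2,1}$, (ii) touch $p$ from above (resp. below) locally within $\overline\Omega\cap\{t\le t_0\}$ with contact exactly at $(x_0,t_0)$, (iii) have prescribed gradient $-\alpha$ there, and (iv) have free boundary moving precisely (or, for the inequality one needs, with controlled speed) along the tangent line of the osculating ball. The standard device is to use the ball itself: parametrize near $(x_0,t_0)$ the signed distance $d$ to $\partial B$ (with $d>0$ inside the $\Omega$-side), which is smooth away from the center, and set $\phi := \alpha\,\psi(d)$ for a suitable smooth increasing $\psi$ with $\psi(0)=0$, $\psi'(0)=1$, truncated/modified so that it lies above $p$ — here one uses that $p$ is Lipschitz and $p(x_0,t_0)=0$ so $p(x,t)\le L\,\mathrm{dist}((x,t),\partial B)+o(\cdot)$ near the contact point, combined with shrinking the radius of $B$ slightly to get strict separation away from $(x_0,t_0)$; an extra linear-in-$t$ or quadratic perturbation adjusts the speed and restores strict touching. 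The one-dimensional setting makes the gradient matching $\phi_x(x_0,t_0)=-\alpha$ and the speed computation $\phi_t/|\phi_x|=1/k$ (resp. $\beta$) direct, since the geometry of the ball's tangent line translates immediately into the ratio $\phi_t:\phi_x$ at the contact point.
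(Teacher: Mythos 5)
Your overall scheme is the paper's: build a test function whose free boundary follows the exterior ball, invoke Definition \ref{D.1.2} with the nonzero-gradient alternative excluded, read off the speed/slope ratio, and dispose of $k=0$ by allowing arbitrarily large test speeds (the paper's $k'$-trick). The Hopf-type argument for $\alpha>0$ and the final algebra are fine. The genuine gap is in the one step where all the work lies: producing a $C^{2,1}$ function $\phi$ that actually touches $p$ from above in $\overline\Omega\cap\{t\le t_0\}$ with the contact data you prescribe. Two concrete problems. First, as written, $\phi=\alpha\,\psi(d)$ with $\psi'(0)=1$ and $d$ the space--time distance to $\partial B$ has spatial gradient $-\alpha k/\sqrt{1+k^2}$ at $(x_0,t_0)$, not $-\alpha$; since $|\phi_x|<\alpha$ there while $p(\cdot,t_0)$ leaves the free boundary with slope exactly $\alpha$, this $\phi$ lies \emph{below} $p$ just inside $\Omega(t_0)$ and does not touch from above. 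Second, and more fundamentally, prescribing the contact data exactly --- $\phi_x(x_0,t_0)=-\alpha$ and free boundary speed exactly $1/k$ --- is generically incompatible with $\phi\ge p$ near the contact: on the $t=t_0$ slice the ordering of $\phi$ and $p$ is decided by second-order terms governed by the equation \eqref{eq:associated} (the sign of $F+A_xp_x$), which your Lipschitz bound $p\le L\,d+o(\cdot)$ never sees (and $L$ has nothing to do with $\alpha$); and for $t<t_0$, points of $\overline\Omega$ close to the earlier-time free boundary satisfy $t_0-t\approx k(x_0-x)$, where a plane with slope $\alpha$ and speed exactly $1/k$ dips to zero or below while $p>0$ there. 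So ``truncate/modify so that it lies above $p$, plus a linear or quadratic perturbation'' is precisely the missing content, not a routine adjustment.

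The paper's proof shows how this must be done: it works with padded parameters, slope $\alpha+2\eps$ and speed $(k'+2\eps)^{-1}$ with $k'>0$, $k'\ge k$, adds the quadratic correction $-C_0(t-(k'+2\eps)x)^2$ with $C_0$ chosen from $F$ and $A_x$ so that $\phi(\cdot,t)$ is a strict supersolution of \eqref{eq:associated} on each time slice, checks the ordering $\phi\ge p$ at the two endpoints of each slice interval (using that $p$ is nondecreasing in time, $-p_x(x_0,t_0)=\alpha$, and that $p$ vanishes below the tilted line through the tangency), and then invokes the elliptic comparison principle slice by slice to get $\phi\ge p$ on the whole neighborhood; the resulting inequality $V(x_0,-(\alpha+2\eps))\ge (k'+2\eps)^{-1}$ is passed to the limit $\eps\to0$, and the arbitrariness of $k'>0$ yields both $k>0$ and the stated bound. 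In short: the domination of $p$ by the test function cannot be arranged with exact first-order contact data and metric estimates alone; it requires the $\eps$-padding together with the PDE comparison argument, and that is the part your proposal leaves unproved.
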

\begin{proof}
Let us only prove the first claim. After shifting and without loss of generality, we can assume   $(x_0,t_0)=(0,0)$, and 
$(-\delta,0)\subseteq \Omega(0)$ for some  $\delta>0$. Then $x_0=0$ is a right-hand side free boundary point and $\alpha>0$. 

Let $\eps\in (0,1)$ be arbitrary.
Since $t=kx$ is the tangent line to the attaching ball from exterior, we have 
\begin{equation}
\label{eq:p on RHS Sigma}
    p(x,t)=0 \text{ for all $(x,t)$ such that $t\leq (k+\eps)x$, $t\leq 0$, and $|x|,|t|$ sufficiently small.}
\end{equation} 
It is clear that \eqref{eq:p on RHS Sigma} holds with $k$ replaced by $k'$, so long as  $k'>0$ and $k'\geq k$. We fix one such $k'$.

Next, since $-p_x(0,0)=\alpha>0$, and $p$ is non-decreasing, we have, for all $l>0$ sufficiently small,
\begin{equation}
\label{eq:p(-l)}
p(-l,t)\leq (\alpha+\eps)l\quad\text{ for all }t\leq 0.
\end{equation}
For some $C_0>0$ and $l>0$ to be determined, define
\[
\phi(x,t):=(t-(k'+2\eps)x)\frac{(\alpha+2\eps)l}{t+(k'+2\eps)l}-C_0(t-(k'+2\eps)x)^2
\]
and 
\[
\Sigma_\eps:=\left\{(x,t)|\, t\in (-(k'+2\eps)l/2,0),\,x\in (-l,{t}/(k'+2\eps))\right\}.
\]

We will show that $p-\phi$ has a local maximum in $\Sigma_\eps$ at $(0,0)$.  
 To this end, we first note that, for $(x,t)\in\Sigma_\eps$, 
\begin{equation}
\label{eq:in Sigma ep}
0\leq t-(k'+2\eps)x\leq (k'+2\eps)l,\quad \frac{\alpha+2\eps}{k'+2\eps}\leq \frac{(\alpha+2\eps)l}{t+(k'+2\eps)l}\leq \frac{2(\alpha+2\eps)}{k'+2\eps}.
\end{equation}
In addition, we have, for $(x,t)\in \Sigma_\ep$,
\begin{align*}
\phi(x,t)&=(t-(k'+2\eps)x)\left[\frac{(\alpha+2\eps)l}{t+(k'+2\eps)l}-C_0(t-(k'+2\eps)x)\right]\\
&\geq (t-(k'+2\eps)x)\left[\frac{2(\alpha+2\eps)}{k'+2\eps}-C_0(k'+2\eps)l\right].
\end{align*}
Thus, by taking 
$l<\frac{2(\alpha+2\ep)}{C_0(k'+2\ep)^2}$, we ensure that 
$\phi(x,t)>0$ for all $(x,t)\in \Sigma_\ep$. 
Furthermore, for $t\in (-(k'+2\eps)l/2,0)$,
\beq\lb{9.21}
 \phi(-l,t)\geq (\alpha+\eps)l+l\eps-C_0((k'+2\eps)l)^2\geq (\alpha+\eps)l.
\eeq

Next, direct computation yields
\begin{align*}
\phi_x(x,t)&=-(k'+2\eps)\frac{(\alpha+2\eps)l}{(k'+2\eps)l+t}+2C_0(k'+2\eps)(t-(k'+2\eps)x),\\
\phi_{xx}(x,t)&=-2C_0(k'+2\eps)^2,\qquad \phi_t(0,0)=\frac{\alpha+2\eps}{k'+2\eps}.
\end{align*}
Moreover, for $(x,t)\in\Sigma_\eps$,
\begin{align*}
    -\partial_x &(A \partial_x \phi )-F = -A\phi_{xx} -A_x \phi_{x} -F\\
    &=  2C_0A(k'+2\eps)^2+A_x\frac{(k'+2\eps)(\alpha+2\eps)l}{(k'+2\eps)l+t}-2C_0A_x(k'+2\eps)(t-(k'+2\eps)x)-F.
    \end{align*}
    Using \eqref{eq:in Sigma ep} to bound the second and third terms from below, we find,
    \begin{align*}
    -\partial_x (A \partial_x \phi )-F
    &\geq 2C_0A(k'+2\eps)^2-|A_x|(\alpha+2\eps)-2C_0|A_x|(k'+2\eps)^2l-F\\
    &= 2C_0(k'+2\ep)^2(A-|A_x|l) - |A_x|(\alpha+2\ep)-F>0,
\end{align*}
where we ensure that the last inequality holds by first taking 
\[
C_0:=\max_x \left\{ \left[|A_x|(\alpha+2\ep)+F\right]/\left[(k'+2\ep)^2A\right]\right\}
\]
and then taking $l<\max_x\{A/(2|A_x|)\}$.  Thus, we have that $\phi$ is a supersolution of \eqref{eq:associated} on $(-l, {t}/(k'+2\eps))$ for each $t\in (-(k'+2\eps)l/2,0)$.

For each $t\in (-(k'+2\eps)l/2,0)$, consider the interval
$I(t):=(-l, {t}/(k'+2\eps))\cap \{x|\, p(x,t)>0\}$. Due to \eqref{eq:p on RHS Sigma} with $k$ replaced by $k'$, we have 
$I(t)\subsetneq (-l, {t}/(k'+2\eps))$ and, since $\phi$ is positive on $\Sigma_\ep$, we also find that $\phi>p$ holds on the right endpoint of $I(t)$.  Moreover,  \eqref{eq:p(-l)} and \eqref{9.21} imply $\phi(-l, t)\geq p(-l, t)$. Therefore, the comparison principle for \eqref{eq:associated} on $I(t)$ implies $\phi(x,t)\geq p(x,t)$ for $x\in I(t)$. Since this holds for each $t\in (-(k'+2\eps)l/2,0)$, we find $\phi\geq p$ holds on $\Sigma_\ep$. Since $\phi(0,0)=0=p(0,0)$, we see that $p-\phi$ has a local maximum on $\Sigma_\ep$ at $(0,0)$.

Thus, we can now apply the definition of viscosity subflow and compare $p$ with $\phi$ in $\Sigma_\eps$ to get at $(0,0)$,
\[
\phi_t-V(0,\phi_x)|\phi_x|\leq 0
\]
which implies 
\[
V(0,-(\alpha+2\eps))(\alpha+2\eps)\geq \frac{\alpha+2\eps}{k'+2\eps}.
\]
By continuity of $V$, passing $\eps\to 0$ yields the claim with $k'$ replaced by $k$. Since we only assumed $k'>0$ and $k'\geq k$ at the beginning, this implies that $1/k$ is finite and the claim holds for $k$ as well.
\end{proof}

\section{Comparison principle for viscosity flows}
\label{ss:comparison}
In this section, we establish the comparison principle for viscosity flows  and well-posedness of viscosity solutions  of \eqref{main}.

Similarly to the setting of free boundary problems in \cite{cbook}, we need to regularize our viscosity sub- and super- solutions to establish comparison. Section \ref{ss:reg flows} is devoted to  defining  the regularizations and establishing some of their important properties.   Then, in Section \ref{ss:comp and wp}, we use these to prove the comparison principle (Theorem \ref{L.cp}). Then we establish well-posedness for \eqref{main} via Perron's method (Theorem \ref{T.3.4}). Finally, in Section \ref{ss:wp ep}, we deduce that \eqref{1.1} is well-posed as well.

\subsection{Regularized flows}
\label{ss:reg flows}

We use the following notation for distance:
\[
|(x,t)-(y,s)|:=\sqrt{|x-y|^2+|t-s|^2}.
\]

Let two admissible sets $\Omega_1$ and $\Omega_2$  be, respectively, viscosity sub- and superflows to \eqref{main} 
in $\R\times (0,T)$. For some $r,\delta\in (0,1)$ and $h\in [0,1)$, we rescale the sets in time as follows:
\beq\lb{5.0}
\Omega_1^\delta :=\left\{(x,t)\mid  (x,(1+\delta)^{-2}t)\in\Omega_1\right\},\quad \Omega_2^\delta :=\left\{(x,t)\mid  (x,(1-\delta)^{-2}t)\in\Omega_2\right\}.
\eeq
Then, we define $U_1$ to be a neighborhood of $\Omega_1^\delta $,
\beq\lb{5.1}
U_1:=\left\{(x,t)\in \R\times (0,T)\, \big\vert \,  |(x,t)-(y,s)|<r- ht,\,(y,s)\in\Omega_1^\delta \right\},
\eeq
and, for  $h<r/T$, we define $U_2$ to be an interior subset of $\Omega_2^\delta $,  
\beq\lb{5.2}
U_2:=\left\{(x,t)\in \R\times (0,T) \, \big\vert \, |(x,t)-(y,s)|>r-ht,\,(y,s)\in (\Omega_2^\delta )^c\right\}.
\eeq

It is clear that 
\[
\Omega_1\subseteq U_1\quad\text{and}\quad U_2\subseteq \Omega_2.
\]
In Proposition \ref{P.4.2}, we'll show that the boundary points of $U_1$ are regular from the interior and the boundary points of $U_2$ are regular from the exterior.

Note that, by assumption, $\Omega_i(\cdot)$ is admissible and thus, by Definition \ref{D.ad}, it is non-decreasing.  So, by the definitions, any tangent line of $\partial U_1$ at a right-hand side free boundary point $(x_0,t_0)$ can be written as $\beta(t-t_0)-(x-x_0)=0$ with some $\beta\geq -{h}/{\sqrt{1-h^2}}\geq-h$, and the tangent line of $\partial U_2$  at a right-hand side free boundary point $(x_0,t_0)$ can be written as $(t-t_0)-k(x-x_0)=0$ with some $k\geq -{h}/{\sqrt{1-h^2}}\geq-h$.

In terms of functions, let $p_1$ and $p_2$ be associated with $\Omega_1$ and $\Omega_2$, respectively.
We define the sup-convolution of $p_1$ as
\beq\lb{5.3}
v_1(x,t):=(1+\delta)^{-1}\sup_{|(y,s)-(x,t)|<r -ht}p_1(y,(1+\delta)^{-2}s),
\eeq
and the inf-convolution of $p_2$ as
\beq\lb{5.4}
v_2(x,t):=(1-\delta)^{-1}\inf_{|(y,s)-(x,t)|<r-ht}p_2(y,(1-\delta)^{-2}s).
\eeq
It is direct to see
\[
U_1=\{v_1>0\}\cap \R\times (0,T)\quad\text{and}\quad U_2=\{v_2>0\}\cap \R\times (0,T).
\]

Finally, for $i=1,2$, let $u_i$ be the function associated with $U_i$.

\begin{lemma}[Associated functions of the regularizations]\lb{L.4.1}
There exists $C=C(T)$, such that if $r^{\sigma}, h \leq\delta/C$, then we have $v_1\leq u_1$ and $v_2\geq u_2$.
\end{lemma}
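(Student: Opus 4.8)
The plan is to establish both inequalities by a comparison argument at each fixed time slice, exploiting that in one spatial dimension the associated functions solve a linear ODE on a union of intervals, and that the sup-convolution $v_1$ is itself a (classical) subsolution of a \emph{perturbed} elliptic equation on the slightly enlarged domain $U_1(t)$. Concretely, I would first verify that $v_1(\cdot,t)$ is, up to controllable error terms, a subsolution of $-\partial_x(A\partial_x v_1)=F$ on $U_1(t)$: the time-rescaling by $(1+\delta)^{-2}$ and the spatial scaling by $(1+\delta)^{-1}$ in \eqref{5.3} are chosen precisely so that the rescaled function overcomes the defect introduced by sup-convolving against a non-constant coefficient $A$. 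The key quantitative point is that the radius of convolution is $r-ht\le r$, so the coefficients $A$, $A_x$ change by at most $O(r^\sigma)$ (using that $A_x$ is $\sigma$-H\"older) across the convolution ball, and the factor $(1+\delta)$ provides a uniformly positive margin $\sim\delta$ that beats this defect once $r^\sigma\le\delta/C$; the restriction $h\le\delta/C$ handles the additional error from the shrinking convolution radius (the $-ht$ term), which contributes a first-order-in-time perturbation. I would record this as: $-\partial_x(A\partial_x v_1)\le F$ on $U_1(t)$ in the viscosity (hence, in 1D, the a.e./weak) sense, for each $t$.

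Next, fix $t$ and compare $v_1(\cdot,t)$ with $u_1(\cdot,t)$ on the open set $U_1(t)=\{v_1(\cdot,t)>0\}$, which is the domain of definition of $u_1(\cdot,t)$ via \eqref{eq:associated}. Both functions vanish on $\partial U_1(t)$ — $u_1$ by construction, and $v_1$ because $U_1=\{v_1>0\}$ and $v_1$ is (lower semi)continuous. Since $v_1$ is a subsolution and $u_1$ is the exact solution of the same (strictly elliptic, one-dimensional) equation with the same zero boundary data, the comparison principle for \eqref{eq:associated} gives $v_1\le u_1$ on $U_1(t)$, hence everywhere (both are $0$ outside). The argument for $v_2\ge u_2$ is entirely symmetric: the inf-convolution with the rescalings by $(1-\delta)^{-1}$ and $(1-\delta)^{-2}$ in \eqref{5.4} makes $v_2(\cdot,t)$ a \emph{super}solution of $-\partial_x(A\partial_x v_2)=F$ on $U_2(t)$ once $r^\sigma,h\le\delta/C$, and $v_2$ vanishes on $\partial U_2(t)$ while $u_2$ is the exact solution with zero boundary data, so $v_2\ge u_2$.

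I expect the main obstacle to be the first step: showing cleanly that the sup-convolution of $p_1$ — which is only known to be a viscosity \emph{flow}-associated function, i.e.\ the exact elliptic solution on the (irregular, merely upper semicontinuous) set $\Omega_1$ — becomes a bona fide subsolution on $U_1(t)$ after the rescalings, with all constants tracked so that the single threshold $r^\sigma,h\le\delta/C$ suffices uniformly in $t\in(0,T)$ (hence the dependence $C=C(T)$). The care needed is: (i) a test-function/viscosity computation showing that if $\varphi$ touches $v_1$ from above at $(x_0,t_0)$, then a corresponding shifted-and-scaled test function touches $p_1$ from above at a nearby point inside $\Omega_1^\delta$, and translating the inequality $-\partial_x(A\partial_x\varphi)\le F$ across the change of variables picks up exactly the coefficient-oscillation defect bounded by $C(r^\sigma+h)$ against a gain of order $\delta$ from the $(1+\delta)$ scaling; and (ii) ensuring that when the touching point of $p_1$ lands on $\partial\Omega_1$, the free-boundary alternative in Definition \ref{D.1.2} is not triggered for $v_1$ at interior points of $U_1(t)$ — which holds because such points are strictly inside the $r$-neighborhood, so $\varphi$ has a genuine interior max and the elliptic (not the free-boundary) condition applies. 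Once this perturbed-subsolution property is in hand, the comparison step is routine 1D ODE comparison on each connected component of $U_1(t)$.
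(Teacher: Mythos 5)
Your proposal follows essentially the same route as the paper's proof: show that the rescaled sup-convolution $v_1(\cdot,t)$ is a subsolution of the interior elliptic equation on $U_1(t)$, with the $(1+\delta)$ factor producing a margin of order $\delta\, F/A$ that absorbs the $O(r^\sigma)$ coefficient defect coming from evaluating $A_x/A$ and $F/A$ at the shifted point, and then conclude by one-dimensional elliptic comparison with $u_1$ since both functions vanish on $\partial U_1(t)$ (symmetrically for $v_2\geq u_2$). One remark: your worry (ii) resolves even more simply than you suggest --- wherever $v_1>0$, any maximizer in the sup-convolution is a point where $p_1>0$, hence lies in the interior of $\Omega_1$, where the associated function solves the elliptic equation exactly, so the free-boundary alternative of Definition \ref{D.1.2} never comes into play.
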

\begin{proof}
We will only show that $v_1\leq u_1$. Indeed, since $v_1(\cdot,t)=u(\cdot,t)=0$ on the boundary of $U_1(t)$, it suffices to verify that $v_1(\cdot,t)$ is a subsolution to the elliptic equation in $U_1(t)$.  
Suppose $v_1(x,t)>0$ and $(x',t')$ is such that $v_1(x,t)=(1+\delta)^{-1}p_1(x', t')$. Then 
\[
|(x,t)-(x',(1+\delta)^{2}t')|\leq r-ht.
\]
It follows from \cite[Lemmas 5.3, 5.4]{kim2021porous} and \cite[Lemma 5.2]{kim2024regularity} that
\beq\lb{5.15}
-(v_1)_{xx}(x,t)\leq -(1+\delta)^{-1}(p_1)_{xx}(x',t')\quad\text{and}\quad (v_1)_x(x,t)=(1+\delta)^{-1}(p_1)_x(x',t'),
\eeq
where the (in)equalities hold in the viscosity sense.  Recall
\[
-(p_1)_{xx}=\frac{A_x(x)}{A(x)}(p_1)_x+\frac{F(x)}{A(x)}.
\]
Thus, we get
\begin{align*}
- (v_1)_{xx}(x,t)&\leq \frac{A_x(x')}{A(x')}(v_1)_x(x,t)+(1+\delta)^{-1}\frac{F(x')}{A(x')}\\
&=\frac{A_x(x')}{A(x')}(v_1)_x(x,t)+\frac{F(x')}{A(x')}-\frac{\delta}{1+\delta}\frac{F(x')}{A(x')} 
\end{align*}
in the viscosity sense. By the assumption, $A$ is strictly positive, and $A_x$ is uniformly bounded and Lipschitz continuous. Also, since $p_1$ and then $v_1$ are uniformly Lipschitz continuous, and $|x-x'|\leq r$, we get
\[
\left|\frac{A_x(x')}{A(x')}(v_1)_x(x,t)+\frac{F(x')}{A(x')}-\frac{A_x(x)}{A(x)}(v_1)_x(x,t)-\frac{F(x)}{A(x)}\right|\leq Cr^{\sigma}
\]
when $t\leq T$. Also using that $\frac{\delta}{1+\delta}\frac{F(x')}{A(x')}$ is strictly positive, if $r^\sigma,h\leq \delta/C$ for some $C>0$ depending only on $T$ and other constants from the assumption,  
\[
-\partial_x(A(x) \partial_x v_1(x,t))< F(x)
\]
in the viscosity sense. Thus, we conclude that $v_1\leq u_1$. 
\end{proof}

 If a free boundary point is regular from the interior (or exterior), we refer to a tangent line of one interior (or exterior) ball at the point as a interior (or exterior) tangent line.

\begin{proposition}[Velocity at regular boundary points]\lb{P.4.2}
Let $T>0$ and let two admissible sets $\Omega_1$ and $\Omega_2$  be, respectively, viscosity sub- and superflows to \eqref{main} 
in $\R\times (0,T)$. Let $U_1$ and $U_2$ be given by \eqref{5.1} and \eqref{5.2}. There exists $C_0=C_0(T)\geq 1$ sufficiently large such that the following holds for all $r,\delta, h\in (0,1)$ satisfying
\beq\lb{cond3}
h\geq C_0\,r^2/\delta,\quad \delta/C\geq r^\sigma\quad\text{and}\quad r>2Th,
\eeq
where $C$ is as in \eqref{cond5}.
\begin{enumerate}[(i)]
    \item \label{item:reg int} If $(x_0,t_0)\in\partial U_1$, then $(x_0,t_0)$ is regular from the interior.
    \item \label{item:int tan}  Furthermore, suppose that $x_0$ is a right-hand side free boundary point such that all interior tangent lines at $(x_0,t_0)$ are given by $ (t-t_0)-k(x-x_0)=0$ with $k\geq 0$. Then there exists an interior tangent line $ (t-t_0)-k_1(x-x_0)=0$ with $k_1>0$ such that
\[
V(x_0,-\alpha_1)\geq 1/k_1+h/4\quad\text{ where $\alpha_1:=-(u_1)_x(x_0,t_0)$.}
\]

     \item \label{item:reg ext} If $(x_0,t_0)\in\partial U_2$, then $(x_0,t_0)$ is regular from the exterior.
    \item \label{item:ext tan}  Furthermore, suppose that $x_0$ is a right-hand side free boundary point such that all exterior tangent lines at $(x_0,t_0)$ are given by $ \beta(t-t_0)-(x-x_0)=0$ with $\beta\geq 0$. Then there exists an exterior tangent line $ \beta_1(t-t_0)-(x-x_0)=0$ with $\beta_1> 0$ such that
\[
V(x_0,-\alpha_2)\leq \beta_1-h/4\quad\text{ where $\alpha_2:=-(u_2)_x(x_0,t_0)$.}
\] 
Furthermore, suppose that $x_0$ is a right-hand side free boundary point such that all exterior tangent lines at $(x_0,t_0)$ are given by $ \beta(t-t_0)-(x-x_0)=0$ with $\beta\geq 0$. Then there exists an exterior tangent line $ \beta_1(t-t_0)-(x-x_0)=0$ with $\beta_1\geq 0$ such that
\[
V(x_0,-\alpha_2)\leq \beta_1-h/4\quad\text{ where $\alpha_2:=-(u_2)_x(x_0,t_0)$.}
\] 

\end{enumerate}

\end{proposition}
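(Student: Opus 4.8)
The four assertions split into two dual halves: the passage from the superflow $\Omega_2$ to $U_2$ is the mirror image of the passage from the subflow $\Omega_1$ to $U_1$ (time dilation by $(1-\delta)^{-2}$ rather than $(1+\delta)^{-2}$; erosion by the space-time balls of radius $r-ht$ rather than dilation; interior balls rather than exterior balls). So I would prove \ref{item:reg int} and \ref{item:int tan} in detail and then obtain \ref{item:reg ext} and \ref{item:ext tan} by repeating the argument with every inequality and every ``interior/exterior'' reversed and $(1+\delta)$ replaced throughout by $(1-\delta)$.

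For \ref{item:reg int}: write $U_1=\{(x,t):\dist((x,t),\Omega_1^\delta)+ht<r\}$ and set $f:=\dist(\cdot,\Omega_1^\delta)+ht$. If $(x_0,t_0)\in\partial U_1$ then $\dist((x_0,t_0),\Omega_1^\delta)=r-ht_0\in(r/2,r)$ since $r>2Th$. The distance to a closed set is semiconcave off that set, with semiconcavity constant at a point equal to the reciprocal of the distance; adding the linear term $ht$ preserves this, so $f$ is semiconcave near $(x_0,t_0)$ with a constant at most $2/r$, while $|\nabla f|\ge 1-h$ wherever it exists. Hence the sublevel set $\{f<r\}=U_1$ has an interior tangent ball at $(x_0,t_0)$ of radius comparable to $r$, which after slight shrinking meets $\partial U_1$ only at $(x_0,t_0)$; this is regularity from the interior. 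Claim \ref{item:reg ext} is proved identically with $f$ replaced by $\dist(\cdot,(\Omega_2^\delta)^c)+ht$ and sublevel sets by superlevel sets.

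For \ref{item:int tan}: let $(x_0,t_0)\in\partial U_1$ be a right-hand side free boundary point and let $(y_0,s_0)\in\partial\Omega_1^\delta$ realize $\dist((x_0,t_0),\Omega_1^\delta)=:\rho_0=r-ht_0$, so that $(y_0,s_0)=(x_0,t_0)-\rho_0(\cos\theta,\sin\theta)$ with $\cos\theta>0$. Since $\partial U_1$ is the level set $\{f=r\}$, with outer normal $(\cos\theta,\sin\theta+h)$ at $(x_0,t_0)$, the hypothesis that all interior tangent slopes are $\ge 0$ forces $\sin\theta+h<0$, and the slope $k_1:=-\cos\theta/(\sin\theta+h)>0$ is attained by an interior ball. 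Under the time dilation $(x,t)\mapsto(x,(1+\delta)^{-2}t)$, the open ball $B_{\rho_0}((x_0,t_0))$ --- which misses $\Omega_1^\delta$ and touches $\partial\Omega_1^\delta$ only at $(y_0,s_0)$ --- becomes an ellipse missing $\Omega_1$ and touching $\partial\Omega_1$ at $P:=(y_0,(1+\delta)^{-2}s_0)$; inside that ellipse one fits an exterior ball for $\Omega_1$ at $P$ whose tangent line has slope $\tilde k:=-(1+\delta)^{-2}\cot\theta>0$ (positive because $\theta\in(-\pi/2,0)$). Proposition~\ref{P.3.9}, applied to the subflow $\Omega_1$ at $P$, gives $V(y_0,-\tilde\alpha)\ge 1/\tilde k$, where $\tilde\alpha:=-(p_1)_x(P)>0$ and $p_1$ is associated with $\Omega_1$. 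To transfer this to $u_1$: by Lemma~\ref{L.4.1}, $v_1\le u_1$ with $v_1$ the sup-convolution and $\{v_1>0\}=U_1$; since $v_1$ and $u_1$ both vanish on $\partial U_1(t_0)$, comparing one-sided $x$-derivatives at $(x_0,t_0)$ gives $-(v_1)_x(x_0^-,t_0)\le\alpha_1:=-(u_1)_x(x_0,t_0)$, while the Danskin/envelope formula for the ball sup-convolution --- the maximizer over the sphere converging to $(y_0,s_0)$ as $x\uparrow x_0$ --- gives $-(v_1)_x(x_0^-,t_0)=(1+\delta)^{-1}\tilde\alpha$. Hence $\tilde\alpha\le(1+\delta)\alpha_1$, and monotonicity of $V$ in the gradient yields $V(y_0,-(1+\delta)\alpha_1)\ge V(y_0,-\tilde\alpha)\ge 1/\tilde k$.

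It remains to put the pieces together quantitatively. Applying \eqref{cond5} with $x_1=x_0$, $x_2=y_0$, $q_1=-\alpha_1$, $\gamma=\delta$ (its requirement $C|x_0-y_0|\le\delta$ holds since $|x_0-y_0|\le\rho_0\le r$ and $\delta\ge Cr^\sigma\ge Cr$) gives
\[
(1+\delta)^2 V(x_0,-\alpha_1)+C|x_0-y_0|^2/\delta\ \geq\ V(y_0,-(1+\delta)\alpha_1)\ \geq\ 1/\tilde k .
\]
A short trigonometric identity gives $1/\tilde k=(1+\delta)^2\bigl(1/k_1+h/\cos\theta\bigr)\ge (1+\delta)^2(1/k_1+h)$, so the $(1+\delta)^2$ factors cancel upon dividing through and
\[
V(x_0,-\alpha_1)\ \geq\ 1/k_1+h-Cr^2/\delta\ \geq\ 1/k_1+h/2 ,
\]
the last step using $h\ge C_0r^2/\delta$ from \eqref{cond3} with $C_0$ taken $\ge 2C$. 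This is \ref{item:int tan}, in fact with slack $h/2>h/4$. Claim \ref{item:ext tan} follows by the dual argument: interior ball of $\Omega_2$ at $P$, the inf-convolution $v_2\ge u_2$, $(1-\delta)$ in place of $(1+\delta)$, \eqref{cond5} applied with $\gamma=\delta/(1-\delta)$, and all inequalities reversed.

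The step I expect to be the real work is the bookkeeping compressed into the last two paragraphs: tracking precisely how the tangent-line slopes and the boundary gradients transform under the dilation $(1\pm\delta)^{\mp2}$ and under the ball regularization, and verifying that the honest gain $\sim h$ produced by the time-dependent radius $r-ht$ survives once the error $\sim Cr^2/\delta$ is absorbed via \eqref{cond3}; the cancellation of the $(1\pm\delta)^2$ factors is exactly what makes a clean $h/4$ (rather than $h/4-O(\delta)$) attainable. A secondary point requiring care is the exclusion of degenerate geometry --- a nearest point lying almost directly above $(x_0,t_0)$ in time, which would send $\cos\theta\to0$ and $1/k_1\to\infty$; this cannot happen because the associated functions are bounded with bounded spatial slopes, so that $V(y_0,-\tilde\alpha)$, hence $1/\tilde k$ and $1/k_1$, remains bounded, forcing $\cos\theta$ bounded below and the estimates to close uniformly.
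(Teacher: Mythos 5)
Your proposal is correct and follows essentially the same route as the paper: locate the nearest point of $\Omega_1^\delta$ at distance $r-ht_0$, use the shrinking-radius region to produce an interior tangent line at $(x_0,t_0)$ whose slope differs from the exterior tangent slope at the pre-image point by an $O(h)$ amount, apply Proposition \ref{P.3.9} there, transfer gradients via Lemma \ref{L.4.1} and the sup-convolution derivative identity, and absorb the $Cr^2/\delta$ error from \eqref{cond5} using $h\geq C_0 r^2/\delta$. The only differences are cosmetic (a semiconcavity phrasing of part (i) instead of the explicit tangent set, and polar-coordinate bookkeeping), and your handling of the degenerate nearest-point-directly-above case matches the paper's use of the conclusion $k>0$ in Proposition \ref{P.3.9}.
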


\begin{proof}
Let us only prove the result for $U_1$, under the assumption  that $x_0$ is a right-hand side free boundary point.  Then $\alpha_1=-(u_1)_x(x_0,t_0)>0$.
Let $(y_0,(1+\delta)^2s_0)\in\partial\Omega_1^\delta$ be such that
\beq\lb{5.6}
\left|(y_0,(1+\delta)^2s_0)-(x_0,t_0)\right|=r-ht_0.
\eeq
Since $x_0$ is one right-hand side boundary of $U_1(t_0)$ and $U_1(\cdot)$ is non-decreasing, the definition of the set $U_1$ yields  
\begin{equation*}
    y_0\leq x_0\quad\text{ and }\quad (1+\delta)^2s_0\geq t_0.
\end{equation*}
Actually, we will show in the proof that $y_0<x_0$.

Note that the space-time ball centered at $(x_0,t_0)$ with radius $r-ht_0$ is one exterior attaching ball of $(y_0,(1+\delta)^2s_0)$ in $\Omega_1^\delta$. Thus, $(y_0,(1+\delta)^2s_0)$ is regular  from exterior of $\Omega_1^\delta$ and $(y_0,s_0)$ is regular  from exterior of $\Omega_1$.

Now, let
\[
A_1:=\left\{(x,t)\mid  |(x,t)-(y_0,(1+\delta)^2s_0)|\leq r-ht\right\}.
\]
We have $A_1\subset U_1$ and $(x_0,t_0)\in(\partial A_1)\cap (\partial U_1)$. Hence, $(x_0, t_0)$ is regular from the interior. This completes the proof of part \ref{item:reg int}.

A direct computation yields that the tangent line of $\partial A_1$ at $(x_0,t_0)$ is given by
\[
\left({(1+\delta)^2s_0-t_0-(rh-h^2t_0)},{x_0-y_0}\right)\cdot(x-x_0,t-t_0)=0.
\]
By  assumption, we can rewrite this tangent line as $(x-x_0)-k(t-t_0)=0$ with $k\geq 0$. The previous expression for the tangent line therefore yields
\beq\lb{667}
\frac{x_0-y_0}{(1+\delta)^2s_0-t_0-(rh-h^2t_0)}=k\geq 0,
\eeq
which implies, in particular,
\beq\lb{666}
{(1+\delta)^2s_0-t_0-(rh-h^2t_0)}>0.
\eeq

\begin{figure}
\label{fig:section3}
    \centering
    \includegraphics[scale=1]{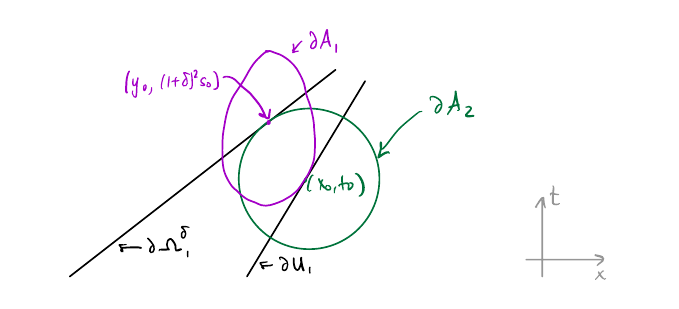}
\caption{Illustration of the space-time regions in the proof of Proposition \ref{P.4.2}.}
\end{figure}

On the other hand, since $(y_0,(1+\delta)^2s_0)\in\partial\Omega_1^\delta$, the space-time set
\[
A_2:=\{(y,s)\mid  |(x_0,t_0)-(y,(1+\delta)^2s)|\leq r-ht_0\}
\]
is outside $\Omega_1$ and $(y_0,s_0)\in\partial\Omega_1$. See Figure \ref{fig:section3} for a cartoon illustration. By \eqref{666}, $(1+\delta)^2s_0-t_0>0$. Then the tangent line of $\partial A_2$ at $(y_0,s_0)$ is given by
\[
\left(1,\frac{x_0-y_0}{(1+\delta)^2((1+\delta)^2s_0-t_0)}\right)\cdot(y-y_0,s-s_0)=0.
\]
We can rewrite the tangent line as $(y-y_0)-k'(s-s_0)=0$ where 
\[
k':=\frac{x_0-y_0}{(1+\delta)^2((1+\delta)^2s_0-t_0)}\geq 0.
\]
Letting $\alpha':=-(p_1)_x(y_0,s_0)\geq 0$,
 the first part of Proposition \ref{P.3.9} yields that $k'>0$ and $V(y_0,-\alpha')\geq (k')^{-1}$. 
Therefore, we find $x_0>y_0$, which implies that $k$ from \eqref{667} is positive, and 
\[
V(y_0,-\alpha')\geq \frac{(1+\delta)^2((1+\delta)^2s_0-t_0)}{x_0-y_0}.
\]

We now seek to bound the right hand side of the previous line from below. To this end, we note that the definition of $k$ implies
$
\frac{(1+\delta)^2s_0-t_0 }{x_0-y_0}-\frac{rh-h^2t_0}{x_0-y_0}=\frac{1}{k}$.
Multiplying by $(1+\delta)^2$ and rearranging yields,
\[
\frac{(1+\delta)^2\left((1+\delta)^2s_0-t_0 \right)}{x_0-y_0}=\frac{(1+\delta)^2}{k}+
\frac{(1+\delta)^2 h(r-ht_0)}{x_0-y_0}\geq \frac{(1+\delta)^2}k+\frac{h}{2},
\]
where we've applied $0<x_0-y_0\leq r$ as well as $r\geq 2ht_0$ to obtain the inequality. 
Thus we find,
\beq\lb{5.16}
V(y_0,-\alpha')\geq \frac{(1+\delta)^2}k+\frac{h}{2}.
\eeq

Now, by Lemma \ref{L.4.1}, $u_1\geq v_1$. And, since $(x_0,t_0)\in \partial U_1$, we have $u_1(x_0,t_0)=v_1(x_0, t_0)$, so we find
\[
\alpha_1=-(u_1)_x(x_0,t_0)\geq -(v_1)_x(x_0,t_0).
\]
Then we note that,  by \eqref{5.15}, we have $(v_1)_x(x_0,t_0)=(1+\delta)^{-1}(p_1)_x(y_0,s_0)$. So we obtain,
\[
\alpha_1=-(u_1)_x(x_0,t_0)\geq -(v_1)_x(x_0,t_0)=(1+\delta)^{-1}\alpha'.
\]
Using condition \eqref{cond5} of Assumption \ref{assump2} \ref{item:assump1.1 2} with $\gamma =\delta$, $x_1=x_0$, $x_2=y_0$, 
and $q_1=\alpha_1$, followed by the fact that  $|x_0-y_0|\leq r$, which holds by \eqref{5.6}, we get
\begin{align*}
(1+\delta)^{-2}V(y_0, -(1+\delta)\alpha_1) \leq V(x_0,
-\alpha_1 )+{C|x_0-{y_0}|^2}/{\delta}\leq V(x_0,-\alpha_1)+{Cr^2}/{\delta}.  
\end{align*}
Then, if $h\geq 4C(1+\delta)^{2}r^2/\delta$ for $C$ sufficiently large, we get from \eqref{5.16} that
\[
V(x_0,-\alpha_1)\geq 1/k+h/4,
\]
which finishes the proof.

\end{proof}

\subsection{Comparison and well-posedness}
\label{ss:comp and wp}
Now we use the regularizations that we defined in the previous subsection to prove the comparison principle and the well-posedness of viscosity solutions. 

\begin{theorem}[Comparison principle]\lb{L.cp}
Under Assumption \ref{assump2}, let two admissible  sets $\Omega_1$ and $\Omega_2$  be, respectively,  viscosity sub- and superflows to \eqref{main} 
in $\R\times (0,T)$ such that $\overline{\Omega_1(0)}\subset\Omega_2(0)$. Then $\overline{\Omega_1(t)}\subset\Omega_2(t)$ for all $t\in (0,T)$. 
\end{theorem}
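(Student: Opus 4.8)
The plan is to argue by contradiction using the time-rescaled, sup/inf-convolved regularizations $U_1$ and $U_2$ constructed in Section~\ref{ss:reg flows}, and to apply Proposition~\ref{P.4.2} at the first contact point. First, choose parameters $r,\delta,h\in(0,1)$ satisfying the compatibility conditions \eqref{cond3} so that both Lemma~\ref{L.4.1} and Proposition~\ref{P.4.2} are available; one fixes $\delta$ small, then $r$ small (so that $r^\sigma\leq\delta/C$), then $h$ in the window $C_0 r^2/\delta\leq h$ and $r>2Th$. Because $\overline{\Omega_1(0)}\subset\Omega_2(0)$ is compact, for all sufficiently small such parameters we still have $\overline{U_1(0)}\subset U_2(0)$, using $\Omega_1\subseteq U_1$ together with the fact that $U_1(0)$ is a bounded-radius thickening of the time-dilated $\Omega_1$ at $t=0$ and $U_2(0)$ is an interior shrinking of $\Omega_2$ at $t=0$; since the time dilation factors $(1\pm\delta)^{\mp2}\to1$ and the spatial radius $r\to0$, the strict inclusion persists. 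It then suffices to prove $\overline{U_1(t)}\subseteq U_2(t)$ for all $t\in(0,T')$ for some $T'$ slightly below $T$ (chosen so $h<r/T'$, making $U_2$ well defined), because as $r,\delta,h\to0$ this yields $\overline{\Omega_1(t)}\subseteq\overline{U_1(t)}\subseteq U_2(t)\subseteq\Omega_2(t)$, and a standard barrier/strict-separation argument upgrades $\subseteq$ to the strict inclusion $\overline{\Omega_1(t)}\subset\Omega_2(t)$ claimed.

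Next, suppose for contradiction that the inclusion $\overline{U_1(t)}\subseteq U_2(t)$ fails, and let $t_0\in(0,T')$ be the first time it fails; by admissibility (non-decreasing in Hausdorff distance) and the semicontinuity of $U_1,U_2$, there is a point $x_0$ with $(x_0,t_0)\in\partial U_1\cap\partial U_2$, and for $t<t_0$ one has $\overline{U_1(t)}\subseteq U_2(t)$, hence $u_1\leq u_2$ for $t\le t_0$ where $u_i$ is the associated function of $U_i$. Assume $x_0$ is a right-hand side free boundary point (the left-hand case is symmetric). Since $u_1\le u_2$ up to $t_0$ and both vanish at $x_0$ at time $t_0$, comparing the one-dimensional elliptic profiles on a left neighborhood of $x_0$ gives the gradient ordering
\[
0<\alpha_1:=-(u_1)_x(x_0,t_0)\;\le\;-(u_2)_x(x_0,t_0)=:\alpha_2.
\]
Geometrically, since $\overline{U_1}$ touches $U_2$ from inside at $(x_0,t_0)$, every interior tangent line of $U_1$ at $(x_0,t_0)$ is also an exterior tangent line of $U_2$ there; write a common one as $(t-t_0)-k(x-x_0)=0$ (equivalently $\beta=k$ in the $U_2$ normalization). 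Now apply Proposition~\ref{P.4.2}\ref{item:int tan} to the subflow side to obtain an interior tangent line with slope $k_1>0$ and
\[
V(x_0,-\alpha_1)\;\ge\;1/k_1+h/4,
\]
and apply Proposition~\ref{P.4.2}\ref{item:ext tan} to the superflow side to obtain an exterior tangent line with slope $\beta_1\ge0$ and
\[
V(x_0,-\alpha_2)\;\le\;\beta_1-h/4.
\]
Because $\overline{U_1}$ sits inside $U_2$ and touches at $(x_0,t_0)$, the interior cone of $U_1$ at that point is contained in the interior cone of $U_2$; comparing the admissible ranges of tangent slopes this forces $k_1\le\beta_1$ (intuitively, the steepest ``upward'' motion $U_1$ can have is no faster than what $U_2$ permits), so $1/k_1\ge 1/\beta_1$. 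Hence, if $\beta_1>0$,
\[
V(x_0,-\alpha_1)\;\ge\;1/k_1+h/4\;\ge\;1/\beta_1+h/4,\qquad
V(x_0,-\alpha_2)\;\le\;\beta_1-h/4,
\]
which, combined with the monotonicity \eqref{cond5'} of $V$ in $q$ and $\alpha_1\le\alpha_2$ (so $V(x_0,-\alpha_1)\le V(x_0,-\alpha_2)$ when $q<0$ and $V$ is decreasing in $q<0$ — i.e. increasing in $|q|$), yields $1/\beta_1+h/4\le \beta_1-h/4$, i.e. a strict quantitative gap that contradicts $1/k_1\ge 1/\beta_1$ once the $h/4$ margins are accounted for; the case $\beta_1=0$ is handled directly because then $V(x_0,-\alpha_2)\le -h/4<0$ contradicts positivity of $V$ (Assumption~\ref{assump2}\ref{item:assump1.1 2}, via $\eta>0$). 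Either way we reach a contradiction, so $t_0$ cannot exist.

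\textbf{Main obstacle.} The delicate point is the bookkeeping of tangent-line slopes at the contact point: one must carefully relate the \emph{interior} tangent lines of $U_1$ to the \emph{exterior} tangent lines of $U_2$ at $(x_0,t_0)$ — establishing $k_1\le\beta_1$ from the inclusion $\overline{U_1}\subseteq U_2$ with contact — and then track the two independent $h/4$ improvements from Proposition~\ref{P.4.2}, together with the gradient comparison $\alpha_1\le\alpha_2$, so that the monotonicity of $V$ in $q$ (condition \eqref{cond5'}) and the perturbation estimate \eqref{cond5} close the loop strictly. Degenerate configurations — vertical tangents ($k=0$, i.e.\ instantaneous jump), the left- versus right-hand free boundary dichotomy, and the possibility that the first contact is not at a single well-defined boundary point — require the regularizations precisely so that Proposition~\ref{P.4.2} applies; reducing to that clean setting is where most of the care goes.
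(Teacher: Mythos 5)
Your overall strategy is the same as the paper's: rescale in time and sup/inf-convolve to get $U_1,U_2$, take the first contact time $t_0$ and a contact point $x_0$, compare the associated elliptic profiles to get $\alpha_1\le\alpha_2$, and play Proposition~\ref{P.4.2}\ref{item:int tan} against \ref{item:ext tan} together with the monotonicity \eqref{cond5'} of $V$. However, the step you yourself flag as delicate — the bookkeeping of the two tangent-line parameters — is wrong as written, and the contradiction does not close. The two parts of Proposition~\ref{P.4.2} use \emph{different} normalizations: for the interior tangent of $U_1$ the line is $(t-t_0)-k_1(x-x_0)=0$, so $k_1$ is a $dt/dx$ slope and the corresponding boundary speed is $1/k_1$, while for the exterior tangent of $U_2$ the line is $\beta_1(t-t_0)-(x-x_0)=0$, so $\beta_1$ is already a speed $dx/dt$. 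Consequently, your parenthetical ``$\beta=k$ in the $U_2$ normalization'' is incorrect (if the lines coincided it would be $\beta=1/k$), and in fact an interior tangent line of $U_1$ need not be an exterior tangent line of $U_2$ at all; what the inclusion $\overline{U_1}\subseteq U_2$ up to the first contact gives is only the slope ordering, which in these normalizations reads $\beta_1 k_1\le 1$, i.e.\ $\beta_1\le 1/k_1$ (this is exactly what the paper derives). Your claimed relation ``$k_1\le\beta_1$, hence $1/k_1\ge 1/\beta_1$'' compares a $dt/dx$ slope with a $dx/dt$ speed, and the endgame built on it fails: combining your two displayed bounds with $V(x_0,-\alpha_1)\le V(x_0,-\alpha_2)$ only yields $1/\beta_1+h/4\le\beta_1-h/4$, which is not a contradiction (take $\beta_1=3$, $h$ small; it is also perfectly compatible with $1/k_1\ge 1/\beta_1$), so the ``strict quantitative gap'' you invoke does not exist.

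The fix is exactly the paper's route: from $\beta_1\le 1/k_1$ one gets $V(x_0,-\alpha_1)\ge 1/k_1+h/4\ge\beta_1+h/4>\beta_1-h/4\ge V(x_0,-\alpha_2)$, which contradicts $V(x_0,-\alpha_1)\le V(x_0,-\alpha_2)$ coming from $\alpha_1\le\alpha_2$ and the monotonicity of $V$ in $|q|$; note the two $h/4$ margins are only needed to make this strict, not to beat any algebraic inequality in $\beta_1$. (Your treatment of the degenerate case $\beta_1=0$ via positivity of $V$ is fine, and your parameter bookkeeping for \eqref{cond3}, the persistence of $\overline{U_1(0)}\subset U_2(0)$, and the final passage back to $\Omega_1,\Omega_2$ as $r,\delta,h\to0$ are consistent with the paper, which is similarly brief on that last step.) So the gap is localized but genuine: as written, the final contradiction does not follow.
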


\begin{proof}
For $\eps\in (0,1)$  sufficiently small, set
\[
r:=\eps^{1/\sigma+2},\quad \delta:=\eps,\quad h:=\eps^{1/\sigma+3},
\]
and then the condition \eqref{cond3} holds.
Next, let $U_1$ and $U_2$ be as in \eqref{5.1} and \eqref{5.2}. By the definition of viscosity sub- and superflow, $\Omega_1$ and $\Omega_2$ are space-time upper- and lower- semicontinuous. Thus, since $\overline\Omega_1(0)\subset\Omega_2(0)$, we have 
\[
\overline U_1(t)\subset U_2(t)
\]
for $\delta=\eps$ sufficiently small and $t$  sufficiently small.

Now, we assume for contradiction that this inclusion does not hold for all $t>0$. Thus, there exists $t_0>0$ such that $\overline U_1(t)\subset U_2(t)$ for all $t<t_0$, and there is $x_0\in\partial U_1(t_0)\cap \partial U_2(t_0)$. Assume that $x_0$ is a right-hand side free boundary point.

Let $u_i$ be the associated function of $U_i$. Since ${U_1}(t)\subseteq U_2(t)$ for $t\leq t_0$, the comparison principle implies $u_1(\cdot,t)\leq u_2(\cdot,t)$ for $t\leq t_0$. Consequently, 
\beq\lb{5.8}
-(u_1)_x(x_0,t_0)\leq -(u_2)_x(x_0,t_0),\quad (u_1)_x(x_0,t_0)<0, \quad\text{and } (u_2)_x(x_0,t_0)<0.
\eeq

By Proposition \ref{P.4.2} parts \ref{item:reg int} and \ref{item:reg ext},  
the free boundary point $(x_0, t_0)$ is regular from the interior of $U_1$ and it is regular from the exterior of $U_2$. 
Next we seek to verify the hypotheses of Proposition \ref{P.4.2} parts \ref{item:int tan} and \ref{item:ext tan}. To this end, 
write one arbitrary  interior tangent line of $\partial U_1$ at $(x_0,t_0)$ as $\beta(t-t_0)-k(x-x_0)=0$, and one exterior tangent line of $\partial U_2$  at $(x_0,t_0)$ as $\beta'(t-t_0)-k'(x-x_0)=0$. Since that $\Omega_i(\cdot)$ is non-decreasing, It follows from the definition of $U_i$ (see \eqref{5.1}, \eqref{5.2} and the discussions after) that 
\[
\beta\geq -kh\text{ with } k> 0\quad\text{and}\quad k'\geq -\beta'h\text{ with } \beta'> 0.
\]
Hence, recalling that $U_1(t_0)\subseteq U_2(t_0)$ and $(x_0,t_0)$ is the first intersection of the two sets $\partial U_1(\cdot)$ and $\partial U_2(\cdot)$, we must have that both tangent lines have positive slopes and the slope of the exterior tangent line of $U_2$ is no smaller than the one of the interior tangent line of $U_1$. Therefore, we are able to write any one interior tangent line of $U_1$ at $(x_0,t_0)$ as 
\[
(t-t_0)-k(x-x_0)=0
\]
and any one exterior tangent line of $U_2$ at $(x_0,t_0)$ as
\[
\beta(t-t_0)-(x-x_0)=0
\]
with $\beta,k\geq 0$ satisfying $\beta k\leq 1$ (due to the order of the slopes).

Now, it follows from Proposition \ref{P.4.2} that we can find a particular interior tangent line of $U_1$ with parameter $k>0$, and one particular exterior tangent line of $U_2$ with parameter $\beta\geq 0$ such that $\beta k\leq 1$, and
\[
V(x_0, (u_1)_x(x_0,t_0))\geq 1/k+h/4,
\]
and
\[
V(x_0, (u_2)_x(x_0,t_0))\leq \beta-h/4.
\]
Since $\beta k\leq 1$, we obtain
\[
V(x_0, (u_1)_x(x_0,t_0))>
V(x_0, (u_2)_x(x_0,t_0)).
\]
However, this contradicts with the monotonicity assumption on $V$ and \eqref{5.8}, which finishes the proof.
\end{proof}

Now, we will use the comparison principle and Perron's method to prove:

\begin{theorem}[Well-posedness for flows]\lb{T.3.4}
Under Assumption \ref{assump2}, let $\Omega_0\subseteq\R$ be open and bounded. Then there exists a unique open set  $\Omega$ that is a  viscosity flow of the equation \eqref{main} such that $\Omega(0)=\Omega_0$.

\end{theorem}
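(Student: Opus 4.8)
The plan is to establish Theorem \ref{T.3.4} by Perron's method, using the comparison principle (Theorem \ref{L.cp}) as the engine for both uniqueness and the verification that the Perron envelope is a genuine flow. Uniqueness is immediate: if $\Omega$ and $\Omega'$ are two viscosity flows with $\Omega(0)=\Omega'(0)=\Omega_0$, then for any $\eta>0$ one compares the subflow $\Omega$ against the superflow $\Omega'_\#$ after enlarging the initial datum of the latter slightly (or shrinking that of the former), so that $\overline{\Omega(0)}\subset \Omega'(0)$ holds in the strict sense required by Theorem \ref{L.cp}; letting $\eta\to 0$ yields $\Omega(t)\subseteq \overline{\Omega'(t)}$ and symmetrically the reverse, whence $\Omega=\Omega'$ as open sets. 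The only mild technical point here is handling the initial layer, which is where the strict positivity of $V$ (Assumption \ref{assump2}(ii), via \eqref{cond5'}) is used to guarantee immediate expansion of the support and thus allow the strict ordering to be arranged.

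For existence, I would define
\[
\Omega:=\bigcup\left\{\Sigma \mid \Sigma \text{ is a viscosity subflow of \eqref{main} in }\R\times(0,T),\ \Sigma(0)\subseteq\Omega_0\right\},
\]
and set $\Omega(t)$ to be the interior of $\bigcup_\Sigma \Sigma(t)$ (equivalently, take the upper-semicontinuous envelope $\Omega^\#$ and pass to its positivity set). First one must check this family is nonempty and that $\Omega$ is admissible — nonemptiness follows by constructing an explicit small subflow (e.g.\ the positivity set of a suitable subsolution with a shrinking or slowly-moving boundary, built from barriers using the lower bound $\eta(|q|)$ on $V$), boundedness follows from an upper barrier (a supersolution whose support grows at a controlled finite rate, using the Lipschitz bound \eqref{cond4} on $V$), and monotonicity of $t\mapsto\Omega(t)$ in Hausdorff distance is inherited from the subflows in the union. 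Then the standard Perron argument: (a) the sup of subflows is again a subflow — here I would verify the test-function condition at a boundary point of $\Omega$ directly from the definition, using that a touching test function for $\Omega$ is, up to a small perturbation, a touching test function for one of the $\Sigma$'s near that point, together with semicontinuity of the associated elliptic solutions in the domain; (b) $\Omega_\#$ is a superflow — this is the "bump" step, argued by contradiction: if the superflow condition fails at a regular-from-the-interior free boundary point, one uses Proposition \ref{P.3.9}/Proposition \ref{P.4.2} to push $\Omega$ out a little near that point, producing a strictly larger subflow still dominated by the admissible competitors, contradicting maximality of $\Omega$. Finally, attainment of the initial data $\Omega(0)=\Omega_0$ is enforced by squeezing between the explicit sub- and super-barriers constructed above as $t\to 0^+$.

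The main obstacle I anticipate is step (b), the bump construction showing $\Omega_\#$ is a superflow, because it is precisely where the non-divergence, variable-coefficient elliptic operator interacts with the free boundary geometry. In the constant-coefficient Hele-Shaw setting one can translate or dilate a subsolution and it remains a subsolution; here, as the authors emphasize in Section \ref{ss:ideas}, such rigid perturbations perturb the coefficients of $-\partial_x(A\partial_x\cdot)$. The remedy — and the reason the machinery of Section \ref{ss:comparison} was set up the way it is — is to bump the \emph{domain} $\Omega$ rather than the function: one enlarges $\Omega(t)$ in a small space-time neighborhood of the bad point by attaching a region whose boundary moves at a speed strictly between the defective superflow speed and $V(x_0,\cdot)$, recomputes the associated function by solving the elliptic equation on the enlarged (still one-dimensional, hence regularity-free) domain, and checks via the comparison principle for \eqref{eq:associated} and Proposition \ref{P.4.2}-type estimates that the bumped set is still a subflow with initial slice inside $\Omega_0$. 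Closing this argument requires care that the bump is localized in time so as not to disturb admissibility (monotonicity in Hausdorff distance) elsewhere, and that the slope $\alpha=-p_x(x_0,t_0)$ of the associated function changes continuously under the domain perturbation, so the strict inequality in $V$ is preserved — a point where the one-dimensional explicit formula for $p_x$ at the boundary (as in \eqref{newBD2}) makes the estimate tractable.
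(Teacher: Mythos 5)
Your existence step is essentially the paper's: Perron's union of subflows, with the superflow property of $\Omega_\#$ obtained by bumping the \emph{domain} near a point where a test function violates the superflow inequality. Two small remarks: non-emptiness is immediate (the static cylinder $\Omega_0\times(0,T)$ is already a subflow, no barrier construction needed), and the bump in the paper is simply $\tilde\Omega:=\Omega\cup\{P>0\}$ with $P(x,t)=\phi(x,t)+\delta-\gamma(|x|^2+t^2)$ built from the failing test function $\phi$ itself (no appeal to regular points or to Propositions \ref{P.3.9}--\ref{P.4.2}); the verification that $\tilde\Omega$ is a subflow uses only that its associated function dominates $P$, so any touching test function inherits $\tilde\phi_t\leq P_t$ and $|\tilde\phi_x|\geq|P_x|>0$, and monotonicity of $V$ closes the argument. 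In particular, the continuity of the boundary slope of the associated function under the domain perturbation, which you flag as the delicate point, is never needed.

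The genuine gap is in your uniqueness argument. You propose to compare the two flows "after enlarging the initial datum of the latter slightly (or shrinking that of the former)" so that the strict ordering $\overline{\Omega(0)}\subset\Omega'(0)$ required by Theorem \ref{L.cp} holds. But modifying the initial datum produces a \emph{different} flow, and without continuity of the flow with respect to initial data — which is not available and is essentially equivalent to the uniqueness you are trying to prove — you cannot relate that auxiliary flow back to $\Omega'$ as the perturbation tends to zero; the argument is circular. The paper's device is different: one first proves the strict expansion property $\overline{\Omega'(t)}\subseteq\Omega'(t+s)$ for every $s>0$, by showing that small expanding intervals $(a-\eps s,\,b+\eps s)$ are subflows (an explicit computation with their associated functions, using the uniform positivity of the boundary gradient and the lower bound $\eta(|q|)$ in \eqref{cond5'}) and then invoking Theorem \ref{L.cp}. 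Since the coefficients are time-independent, the time-translate $\Omega'(\cdot+s)$ is again a viscosity flow, and its time-zero slice strictly contains $\overline{\Omega(0)}$; comparison applied to the pair $\left(\Omega,\ \Omega'(\cdot+s)\right)$ then gives $\overline{\Omega(t)}\subseteq\Omega'(t+s)$, and letting $s\to0$ yields $\Omega\subseteq\Omega'$, with equality by symmetry. Your mention of immediate expansion of the support identifies the right ingredient, but it is the time shift of the \emph{same} flow — not a flow with perturbed initial datum — that turns it into a proof, and the expansion itself requires the expanding-interval subflow barrier rather than just the positivity of $V$.
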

\begin{proof}
We define $\Omega$ to be the union of all $U$ where  $U$ is  any viscosity subflow of \eqref{main} with initial $U(0)\subseteq\Omega_0$.
The set of viscosity subflows is certainly non-empty as $U:=\Omega_0\times (0,T)$ belongs to it.
According to the definition of viscosity flow, it is easy to check that $\Omega$ is itself a subflow.

We aim to show that $\Omega_\#$ is a superflow. Suppose that this is not the case. For $p$ being the associated function of $\Omega_\#$, suppose that there exists a smooth function $\phi(x,t)$ such that
$p-\phi$ has a local minimum zero in $\overline{\Omega_\#}\cap\{t\leq t_0\}$ at $(x_0,t_0)\in\partial \Omega_\#$, and
\begin{align}
   &
    |\phi_x(x_0,t_0)|\neq 0, \quad\text{and}\lb{5.11}\\
&\phi_t(x_0,t_0)-V(x_0,\phi_x(x_0,t_0))|\phi_x(x_0,t_0)|< 0.\lb{5.111}
\end{align}

Without loss of generality, after shifting, we assume $x_0=t_0=0$, and we also assume that $x_0$ is a right-hand side boundary point. Thus, we must have $\phi_x(0,0)<0$.  
For some $\delta,\gamma\in (0,1)$, set
\[
P(x,t):=\phi(x,t)+\delta-\gamma (|x|^2+t^2).
\]
The continuity of the coefficients, together with \eqref{5.11} and \eqref{5.111}, implies that, for $r$ and $\gamma$ sufficiently small,
\beq\lb{5.122}
P_x(x,t)<0\quad\text{and}\quad P_t(x,t)-V(x,P_x(x,t))|P_x(x,t)|< 0.
\eeq
holds for all $(x,t)\in Q_r:=\{|x|+|t|<r\}$. 
We then select $\delta:=\gamma r^2/8$, and consequently, $\delta-\gamma(|x|^2+t^2)\leq 0$ for $(x,t)\in Q_{r/2}^c$. 
Since $p\geq\phi$ 
locally near $(0,0)$ for $t\leq 0$, we get $P\leq 0$ outside $\{p>0\}\cup Q_{r/2}$.

Now, consider 
\beq
\label{eq:Omega tilde}
\tilde\Omega:= \Omega\cup \{P>0\}.
\eeq
Since $P(0,0)>0$, $\tilde\Omega$ contains $\Omega$ and is strictly larger. 
To complete the proof, it suffices to show that $\tilde\Omega$ is a subflow, as this will yield the desired contradiction with the definition of $\Omega$, which will in turn imply that $\Omega$ is a viscosity superflow, as desired.

To prove that $\tilde\Omega$ is a subflow, let $\tilde p$ be the associated function to $\tilde\Omega$. In view of \eqref{eq:Omega tilde}, we have
\[
\tilde p\geq\max\{ p, P\}.
\]
Then take any test function $\tilde\phi$ that touches $\tilde p$ from above at a free boundary  point $(x_1,t_1)\in\partial\tilde\Omega$ in ${\tilde\Omega}\cap\{t\leq t_1\}$. It suffices to consider the case of $(x_1,t_1)\in\partial\{P>0\}$, as $\Omega$ is known to be a subflow. It is clear that $\tilde\phi\geq\tilde p\geq P$ locally near $(x_1,t_1)$ for $t\leq t_1$. Since $\tilde\phi(x_1,t_1)=P(x_1,t_1)=0$, this implies
\[
\tilde\phi_t(x_1,t_1)\leq P_t(x_1,t_1)\quad\text{and}\quad -\tilde\phi_x(x_1,t_1)\geq -P_x(x_1,t_1)>0,
\]
where the last strict inequality is due to \eqref{5.122}. 
Recall that $P\leq 0$ outside $\{p>0\}\cup Q_{r/2}$, and so $(x_1,t_1)\in Q_{r/2}$. Thus, \eqref{5.122} and the monotonicity of $V$ yield
\[
\tilde \phi_t(x_1,t_1)-V(x_1,\tilde \phi_x(x_1,t_1))|\tilde \phi_x(x_1,t_1)|< 0.
\]
We conclude that $\tilde\Omega$ is a subflow.

Next, we prove uniqueness. Suppose that $\Omega_1$ and $\Omega_2$ are two viscosity flows. We claim that
\[
\overline{\Omega_2(t)}\subseteq\Omega_2(t+s)
\]
for any $s> 0$.
Indeed, suppose that $[a,b]\subseteq \Omega_2(t)$. It suffices to show that $(a-\eps s,b+\eps s)\subseteq \Omega_2(t+s)$ for all $s\in (0,1)$ and for some $\eps>0$ that depends only on $b-a$ and  universal constants. By the comparison principle (Theorem \ref{L.cp}), it suffices to show that $(a-\eps s,b+\eps s)$ is a subflow. Let $u(x,s)$ be the associated function of the set $(a-\eps s,b+\eps s)$, i.e., $-\partial_x(A u_x)=F$ in $(a-\eps s,b+\eps s)$ with $0$-boundary condition. Then there exists $C\geq 1$, depending only on universal constants and $b-a$, such that for all $s\in (0,1)$ and $\eps<b-a$,
\[
-u_x(b+\eps s,s),\, u_x(a-\eps s,s)\in [1/C,C]\quad\text{and}\quad
u_t(x,s)\in [0,C\eps].
\]
Here $u_x(\cdot,s)$  is defined as the left (or right) limit of $u_x$  evaluated at $x=b+\eps s$ (or $x=a-\eps s$).
Thus, by taking $\eps$ to be sufficiently small and applying Assumption \ref{assump2} \ref{item:assump1.1 2}, we have
\[
u_t(x,s)\leq V(\cdot,u_x)|u_x|(x,s)\quad\text{ for }x=a-\eps s\text{ or }b+\eps s
\]
in the classical sense. Thus, we conclude that $(a-\eps s,b+\eps s)$ is a subflow for all $s\in (0,1)$, which finishes the proof of the claim.

Now, since the coefficients of \eqref{main} are time-independent, $\Omega'(\cdot):=\Omega_2(\cdot+s)$ is also a viscosity flow. Then using $\overline{\Omega_1(0)}\subseteq\Omega'(0)$, Theorem \ref{L.cp} implies that
\[
\overline{\Omega_1(t)}\subseteq\Omega_2(t+s)
\]
for all $s$. By the continuity of viscosity flows and passing $s\to 0$, we obtain $\Omega_1=\Omega_2$.
\end{proof}

Finally, we provide:
\begin{proof}[Proof of Theorem \ref{T.1}] By Theorem \ref{T.3.4} and 
 Lemma \ref{L.def}, there exists a unique viscosity solution to \eqref{main} with initial data having support of $\Omega_0$.
\end{proof}

\subsection{Well-posedness of \eqref{1.1}}
\label{ss:wp ep}
 To end the section, we show that the equation \eqref{1.1} is also well-posed. Indeed, this is a direct corollary of Theorem \ref{T.3.4} and the following lemma.
\begin{lemma}[Well-posedness of the equation with oscillation]
\label{lem:assumption ep}
Under Assumption \ref{assumption} (1)--(3), for each fixed $\omega$, if taking
\[
V(x,q):=B(x,\eps^{-1}x,\omega)|q|+G(x,\eps^{-1}x,\omega),
\]
Assumption \ref{assump2} \ref{item:assump1.1 2} is satisfied with $C$ depending on $\eps$.    
\end{lemma}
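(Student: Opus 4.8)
The plan is to verify directly that $V(x,q):=B(x,\eps^{-1}x,\omega)|q|+G(x,\eps^{-1}x,\omega)$ satisfies each of the three conditions \eqref{cond4}, \eqref{cond5'}, and \eqref{cond5} in Assumption \ref{assump2} \ref{item:assump1.1 2}, using the regularity hypotheses in Assumption \ref{assumption} (1)--(3). Throughout, write $b(x):=B(x,\eps^{-1}x,\omega)$ and $g(x):=G(x,\eps^{-1}x,\omega)$, so that $V(x,q)=b(x)|q|+g(x)$. Note that $b$ is Lipschitz in $x$ (with constant $O(1/\eps)$) since $B$ is Lipschitz in both slots, $b$ is uniformly bounded above and below by positive constants, $\sqrt{g}$ is Lipschitz in $x$ (again with constant $O(1/\eps)$) since $\sqrt{G}$ is Lipschitz in both slots, and $g$ is nonnegative and bounded.

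First I would check \eqref{cond4}. Write
\[
V(x_1,q_1)-V(x_2,q_2)=\bigl(b(x_1)-b(x_2)\bigr)|q_1|+b(x_2)\bigl(|q_1|-|q_2|\bigr)+\bigl(g(x_1)-g(x_2)\bigr).
\]
The first term is bounded by $\mathrm{Lip}(b)|x_1-x_2|\,|q_1|$; the second by $\|b\|_\infty|q_1-q_2|$; and for the third, $|g(x_1)-g(x_2)|=|\sqrt{g(x_1)}-\sqrt{g(x_2)}|\cdot|\sqrt{g(x_1)}+\sqrt{g(x_2)}|\le \mathrm{Lip}(\sqrt{g})\,|x_1-x_2|\cdot 2\|\sqrt{g}\|_\infty$, which is $\le C|x_1-x_2|\le C(1+|q_1|)|x_1-x_2|$. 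Collecting, \eqref{cond4} holds with $C=C(\eps)$. Next, for \eqref{cond5'}: monotonicity in $q>0$ is immediate since $V(x_1,(1+\gamma)q_1)=b(x_1)(1+\gamma)|q_1|+g(x_1)\ge b(x_1)|q_1|+g(x_1)=V(x_1,q_1)$ (the sign convention makes $(1+\gamma)q_1$ the same sign as $q_1$, so the absolute values are compatible); and $V(x_1,q_1)=b(x_1)|q_1|+g(x_1)\ge b(x_1)|q_1|\ge (\inf b)|q_1|=:\eta(|q_1|)$ using that $b$ is uniformly positive (and this $\eta$ is continuous and positive on $(0,\infty)$).

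The slightly more delicate one is \eqref{cond5}, which requires $(1+\gamma)^2V(x_1,q_1)+C|x_1-x_2|^2/\gamma\ge V(x_2,(1+\gamma)q_1)$ whenever $C|x_1-x_2|\le\gamma$. Expanding the right side, $V(x_2,(1+\gamma)q_1)=b(x_2)(1+\gamma)|q_1|+g(x_2)$. For the $b$-part: $(1+\gamma)^2 b(x_1)|q_1|-(1+\gamma)b(x_2)|q_1|=(1+\gamma)|q_1|\bigl((1+\gamma)b(x_1)-b(x_2)\bigr)\ge(1+\gamma)|q_1|\bigl(\gamma(\inf b)-\mathrm{Lip}(b)|x_1-x_2|\bigr)$, and since $|x_1-x_2|\le\gamma/C$ with $C$ large (chosen $\ge \mathrm{Lip}(b)/\inf b$, permissible as $C$ may depend on $\eps$), this is nonnegative, so the $b$-part causes no trouble. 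For the $g$-part I need $(1+\gamma)^2 g(x_1)+C|x_1-x_2|^2/\gamma\ge g(x_2)$; it suffices to show $g(x_2)-g(x_1)\le C|x_1-x_2|^2/\gamma$. Writing $g(x_2)-g(x_1)=\bigl(\sqrt{g(x_2)}-\sqrt{g(x_1)}\bigr)\bigl(\sqrt{g(x_2)}+\sqrt{g(x_1)}\bigr)$ and using $\sqrt{g(x_2)}\le\sqrt{g(x_1)}+\mathrm{Lip}(\sqrt{g})|x_1-x_2|$, one gets $g(x_2)-g(x_1)\le \mathrm{Lip}(\sqrt{g})|x_1-x_2|\bigl(2\sqrt{g(x_1)}+\mathrm{Lip}(\sqrt{g})|x_1-x_2|\bigr)$; using $(1+\gamma)^2 g(x_1)\ge g(x_1)$, the term $(1+\gamma)^2 g(x_1)$ absorbs $2\mathrm{Lip}(\sqrt{g})|x_1-x_2|\sqrt{g(x_1)}\le g(x_1)+\mathrm{Lip}(\sqrt{g})^2|x_1-x_2|^2$ by Young's inequality, leaving a remainder bounded by $C|x_1-x_2|^2\le C|x_1-x_2|^2/\gamma$ since $\gamma<1$. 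I expect the main (though still routine) obstacle to be bookkeeping the constants in this last estimate so that everything is genuinely controlled by $C|x_1-x_2|^2/\gamma$ with a single $C=C(\eps)$; the case split on whether $G\equiv0$ (Assumption \ref{assumption} (iv)) makes this trivial in one case and is not needed here. The well-posedness of \eqref{1.1} for each fixed $\omega$ and $\eps$ then follows by applying Theorem \ref{T.1} (equivalently Theorem \ref{T.3.4}) with these coefficients.
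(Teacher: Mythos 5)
Your verification of \eqref{cond4}, of \eqref{cond5'}, and of the $B$-part of \eqref{cond5} is correct and is essentially what the paper leaves as ``direct to check.'' The genuine gap is in your $g$-part of \eqref{cond5}. First, the reduction ``it suffices to show $g(x_2)-g(x_1)\le C|x_1-x_2|^2/\gamma$'' is a reduction to a false statement: when $G$ is strictly positive and nonconstant, $g(x_2)-g(x_1)$ is generically of order $|x_1-x_2|$, which is not $O(|x_1-x_2|^2/\gamma)$ in the regime $|x_1-x_2|\ll\gamma$ permitted by the constraint $C|x_1-x_2|\le\gamma$; the target inequality is only true because of the slack $\bigl((1+\gamma)^2-1\bigr)g(x_1)\ge 2\gamma g(x_1)$, which this reduction throws away. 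Second, the fallback you then use---absorbing the cross term via the unit-weight Young inequality $2\,\mathrm{Lip}(\sqrt g)\,|x_1-x_2|\sqrt{g(x_1)}\le g(x_1)+\mathrm{Lip}(\sqrt g)^2|x_1-x_2|^2$---does not close either: after absorption you are left with $\bigl((1+\gamma)^2-2\bigr)g(x_1)+\bigl(C/\gamma-2\,\mathrm{Lip}(\sqrt g)^2\bigr)|x_1-x_2|^2$, whose first term is negative for every $\gamma<\sqrt2-1$ and cannot be dominated by the quadratic term when $|x_1-x_2|$ is small and $g(x_1)$ is of order one. So the claim that the ``remainder is bounded by $C|x_1-x_2|^2$'' is exactly where the argument fails.

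The missing idea is the $\gamma$-weighted Cauchy--Schwarz that the $1/\gamma$ in \eqref{cond5} is designed for, and it is what the paper's proof does (working with $\sqrt{V(x_1,q_1)}$ rather than $\sqrt{g(x_1)}$, see \eqref{5.7} and the display after it): bound the cross term by $2\,\mathrm{Lip}(\sqrt g)\,|x_1-x_2|\sqrt{g(x_1)}\le 2\gamma g(x_1)+\tfrac{1}{2\gamma}\mathrm{Lip}(\sqrt g)^2|x_1-x_2|^2$, so that the order-$\gamma$ slack $(1+\gamma)^2g(x_1)-g(x_1)\ge 2\gamma g(x_1)$ absorbs the $g(x_1)$ contribution and the remainder is genuinely $\le C|x_1-x_2|^2/\gamma$ for $C=C(\eps)$ large (using $\gamma<1$). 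With that single correction your decomposition into a $b$-part and a $g$-part closes and gives a proof structurally parallel to the paper's, which instead keeps $V$ intact and absorbs $C\sqrt{V(x_1,q_1)}\,|x_1-x_2|$ against $\tfrac{\gamma}{2}V(x_1,q_1)+C'|x_1-x_2|^2/\gamma$.
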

\begin{proof}
It is direct to check by the Lipschitz condition on $B$ and $G$ that \eqref{cond4} holds, and
let us show \eqref{cond5} below. Note that, since $\sqrt{G(x,y,\omega)}$ is uniformly Lipschitz continuous in both $x$ and $y$, there exists $C=C(\eps)$ such that
\[
|G(x_1,\eps^{-1}x_1,\omega)-G(x_2,\eps^{-1}x_2,\omega)|
\leq C\sqrt{G(x_1,\eps^{-1}x_1,\omega)}|x_1-x_2|+C|x_1-x_2|^2.
\]
Hence, since $B$ is Lipschitz continuous and strictly positive, and $|x_1-x_2|\leq C_0\gamma$ for some $C_0>0$, we obtain for some $C>0$ independent of $C_0$ that
\beq\lb{5.7}
\begin{aligned}
V(x_1,q_1)&
\geq V(x_2,q_1)-C|x_1-x_2||q_1|-|G(x_1,\eps^{-1}x_1,\omega)-G(x_2,\eps^{-1}x_2,\omega)|\\
&\geq V(x_2,q_1)-C\gamma|q_1|/C_0-C\sqrt{V(x_1,q_1)}|x_1-x_2|-C|x_1-x_2|^2.  
\end{aligned}
\eeq
Next, if $C_0$ is sufficiently large,
\[
(1+\gamma)V(x,q)\geq V(x,(1+\gamma)q)\quad\text{and}\quad\frac{\gamma}{2}V(x_1,q_1)\geq 2C\gamma|q_1|/C_0.
\]
Consequently, it follows from \eqref{5.7} that
\begin{align*}
&(1+\gamma)^2 V(x_1,q_1)+C'|x_1-x_2|^2/\gamma-V(x_2,(1+\gamma)q_1)\\
&\qquad \geq (1+3\gamma/2) V(x_1,q_1)+2C\gamma|q_1|/C_0+C'|x_1-x_2|^2/\gamma-(1+\gamma)V(x_2,q_1)\\
&\qquad \geq \gamma V(x_1,q_1)/2+C'|x_1-x_2|^2/\gamma-2C\sqrt{V(x_1,q_1)}|x_1-x_2|-2C|x_1-x_2|^2
\end{align*}
which is nonnegative when $C'$ is sufficiently large. We proved the claim.
\end{proof}
We will show in Lemma \ref{L.4.8} that $V=\ol V$ with $\ol V$ from  Theorem \ref{thm:main} satisfies Assumption \ref{assump2} \ref{item:assump1.1 2} as well, under the extra assumption of strictly positive $G$.

\section{The random environment and interior homogenization} \label{S.random}

\subsection{The random environment}
In this section, we gather several theorems from probability.

\begin{theorem}[Birkhoff--Khinchin Theorem]
Suppose that the transformation group $\{\tau_y\}$ satisfies the ergodicity assumption, and $f:\bbR\times\Sigma\to \bbR$ is stationary and uniformly finite. Then there is $\Sigma_0\subseteq\Sigma$ with $\bbP[\Sigma_0]=1$ such that for all $\omega\in\Sigma_0$ and all $x_0\in\bbR$ we have
\[
\lim_{r\to\infty}\frac{1}{r}\int_0^r f(x_0+x,\omega)dx=\lim_{r\to\infty}\frac{1}{r}\int_0^r f(x_0,\tau_x\omega)dx=\bbE[f(0,\cdot)].
\]
\end{theorem}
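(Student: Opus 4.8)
The plan is to reduce the statement to the classical continuous-parameter Birkhoff ergodic theorem for the measure-preserving flow $\{\tau_x\}_{x\in\bbR}$, and then to upgrade the ``almost every $\omega$'' conclusion to one that is valid for \emph{every} base point $x_0$ simultaneously. First I would use stationarity to collapse the two averages: for all $x,x_0\in\bbR$ and $\omega\in\Sigma$ one has $f(x_0+x,\omega)=f(x_0,\tau_x\omega)$, so the two Ces\`aro averages in the statement are \emph{equal term by term} for every $r$, and the first equality in the theorem is automatic. Setting $g(\omega):=f(0,\omega)$, stationarity and commutativity of the group give $f(x_0,\tau_x\omega)=f(0,\tau_{x_0}\tau_x\omega)=g\big(\tau_x(\tau_{x_0}\omega)\big)$; thus everything reduces to understanding $\frac1r\int_0^r g(\tau_x\eta)\,dx$ as $r\to\infty$, uniformly over the base point $\eta=\tau_{x_0}\omega$.

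Since $f$ is uniformly finite, $g\in L^\infty(\Sigma)\subseteq L^1(\Sigma)$, and (using the standing joint measurability of $(x,\omega)\mapsto\tau_x\omega$, so that $x\mapsto g(\tau_x\omega)$ is measurable) the continuous-time Birkhoff--Khinchin theorem for the flow $\{\tau_x\}$ applies. If one prefers to avoid citing it, it follows from the discrete Birkhoff theorem applied to $\tau_1$ and $\omega\mapsto\int_0^1 g(\tau_x\omega)\,dx$, the non-integer part of $r$ being controlled by $\|g\|_\infty/r\to0$. Either way, on a full-measure set the limit $g^\ast(\omega):=\lim_{r\to\infty}\frac1r\int_0^r g(\tau_x\omega)\,dx$ exists. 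The key observation is that $g^\ast$ is \emph{exactly} flow-invariant, not merely invariant modulo null sets: for any $y\in\bbR$, on the set where the limit exists,
\[
\frac1r\int_0^r g\big(\tau_x(\tau_y\omega)\big)\,dx=\frac1r\int_y^{y+r} g(\tau_x\omega)\,dx,
\]
which differs from $\frac1r\int_0^r g(\tau_x\omega)\,dx$ by at most $2|y|\,\|g\|_\infty/r\to0$; hence $g^\ast(\tau_y\omega)=g^\ast(\omega)$. Consequently each set $\{g^\ast\le c\}$ is $\tau_y$-invariant for every $c$ and every $y$, so ergodicity forces $g^\ast$ to be a.e.\ constant. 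Because $\bbE\big[\frac1r\int_0^r g(\tau_x\cdot)\,dx\big]=\bbE[g]$ for every $r$ by measure-preservation and Fubini, bounded convergence identifies this constant as $\bbE[g]=\bbE[f(0,\cdot)]$.

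Finally I would define
\[
\Sigma_0:=\Big\{\omega\in\Sigma \ \Big|\ \lim_{r\to\infty}\tfrac1r\int_0^r g(\tau_x\omega)\,dx \text{ exists and equals } \bbE[f(0,\cdot)]\Big\},
\]
so that $\bbP[\Sigma_0]=1$ by the previous paragraph, and $\tau_y\Sigma_0=\Sigma_0$ for every $y\in\bbR$ by the bounded-shift computation above. In particular, for every $\omega\in\Sigma_0$ and every $x_0\in\bbR$ we have $\tau_{x_0}\omega\in\Sigma_0$, whence
\[
\lim_{r\to\infty}\tfrac1r\int_0^r f(x_0+x,\omega)\,dx=\lim_{r\to\infty}\tfrac1r\int_0^r g\big(\tau_x(\tau_{x_0}\omega)\big)\,dx=\bbE[f(0,\cdot)],
\]
and the middle average equals $\tfrac1r\int_0^r f(x_0,\tau_x\omega)\,dx$ by stationarity, giving the full claim. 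The only genuinely nontrivial point — and the one I expect to be the main obstacle — is precisely this passage from a null set that a priori depends on $x_0$ to a single null set working for all $x_0$; it is resolved by the exact (not merely $\bmod$-null) flow-invariance of the good set $\Sigma_0$, which in turn rests on the asymptotic insensitivity of the time average to a bounded shift of the integration window. The measurability of the flow needed to invoke the continuous ergodic theorem is a routine matter within the standing framework.
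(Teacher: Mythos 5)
Your proof is correct and takes essentially the same route as the paper: the base case $x_0=0$ is the classical continuous Birkhoff--Khinchin theorem, and the passage to general $x_0$ rests on the same boundary estimate (shifting the averaging window by $x_0$ changes the average by at most $2|x_0|\,\|f\|_\infty/r\to 0$), which you package as exact flow-invariance of the good set $\Sigma_0$ whereas the paper compares the two averages for the same $\omega$ directly. Your extra re-derivation of the constancy of the limit and its identification with $\bbE[f(0,\cdot)]$ is already contained in the cited classical theorem and is harmless.
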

\begin{proof}
If $x_0=0$, the conclusion follows from the continuous version of Birkhoff--Khinchin Theorem \cite{kallenberg1997foundations,bergelson2012discrete}. The general case follows from the observation that
\[
\frac{1}{r}\int_0^r f(x_0+x,\omega)dx-\frac{1}{r}\int_0^r f(x,\omega)dx=\frac{1}{r}\left(\int_r^{r+x_0}f(x,\omega)dx-\int_0^{x_0}f(x,\omega)dx\right),
\]
which goes to $0$ as $r\to\infty$.
\end{proof}

\begin{theorem}[Wiener's Ergodic Theorem \cite{wiener1939ergodic,becker1981multiparameter}]
Suppose that $\{\tau_y\}$ satisfies the ergodicity assumption. For each $g\in L^1(\Sigma)$, there is $\Sigma_0\subseteq\Sigma$ with $\bbP[\Sigma_0]=1$ such that for all $\omega\in\Sigma_0$,
\[
\lim_{r\to\infty}\frac{1}{|\calB_r|}\int_{\calB_r}g(\tau_y\omega)dy=\int_\Sigma g(\omega)d\bbP.
\]
\end{theorem}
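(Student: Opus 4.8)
The plan is to treat this as the Wiener--Becker multiparameter ergodic theorem --- in the case at hand the group is indexed by $\bbR$, so $d=1$ and $\calB_r=(-r,r)$, but the argument below is dimension-free --- and to obtain a.e.\ convergence by combining two ingredients via the Banach principle: a weak-type $(1,1)$ maximal inequality for the averaging operators
\[
\calA_r g(\omega):=\frac{1}{|\calB_r|}\int_{\calB_r}g(\tau_y\omega)\,dy,
\]
together with a.e.\ convergence of $\calA_r g$ for $g$ ranging over a subclass of $L^1(\Sigma)$ that is dense. Once $\calA_r g$ is known to converge a.e.\ for every $g\in L^1(\Sigma)$, the limit is identified by ergodicity.

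\emph{A maximal inequality by transference.} First I would show that $\calM g:=\sup_{r>0}\calA_r|g|$ satisfies $\bbP[\calM g>\lambda]\le C_d\lambda^{-1}\|g\|_{L^1(\Sigma)}$, following Calderón's transference principle. Fix $R>0$ and $g\ge0$, and for each $\omega$ set $\phi_\omega(y):=g(\tau_y\omega)$ for $y\in\calB_{2R}\subset\bbR^d$. Since $\tau_y\tau_w=\tau_{y+w}$, for $y\in\calB_R$ and $0<r\le R$ we have
\[
\calA_r g(\tau_y\omega)=\frac{1}{|\calB_r|}\int_{\calB_r(y)}\phi_\omega(z)\,dz\le (M_{\bbR^d}\phi_\omega)(y),
\]
where $M_{\bbR^d}$ is the Hardy--Littlewood maximal operator. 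Integrating the classical weak-type estimate for $M_{\bbR^d}$ first over $\calB_R$ and then over $\Sigma$, and using Fubini together with the invariance $\bbP[\tau_y^{-1}E]=\bbP[E]$, gives
\[
|\calB_R|\,\bbP\big[\textstyle\sup_{0<r\le R}\calA_r g>\lambda\big]\le\frac{C_d}{\lambda}\int_\Sigma\int_{\calB_{2R}}g(\tau_y\omega)\,dy\,d\bbP(\omega)=\frac{C_d|\calB_{2R}|}{\lambda}\,\|g\|_{L^1(\Sigma)}.
\]
As $|\calB_{2R}|/|\calB_R|$ is a fixed constant, letting $R\to\infty$ (monotone convergence) yields the maximal inequality.

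\emph{Pointwise convergence on a dense class.} Let $\calI\subset L^2(\Sigma)$ be the functions invariant under every $\tau_y$; by ergodicity $\calI$ consists of the constants. Let $\calC$ be the span of the ``coboundaries'' $X_if$, where $X_i$ generates the $i$-th coordinate flow $t\mapsto\tau_{te_i}$ and $f$ runs over the bounded $C^\infty$-vectors of the action (which form a common core for all the $X_i$). A skew-adjointness argument ($X_i^*=-X_i$, hence $\overline{\mathrm{Range}(X_i)}=\ker(X_i)^\perp$) shows $(\calI\oplus\calC)^\perp=\{0\}$, so $\calI\oplus\calC$ is dense in $L^2(\Sigma)$, hence in $L^1(\Sigma)$. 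On $\calI$ the averages are constant and converge trivially. For $g=X_if$ the orbit $y\mapsto f(\tau_y\omega)$ is smooth, so the divergence theorem gives $\calA_r g(\omega)=|\calB_r|^{-1}\int_{\partial\calB_r}f(\tau_y\omega)\,(\nu\cdot e_i)\,dS(y)$, whence $|\calA_r g(\omega)|\le(|\partial\calB_r|/|\calB_r|)\,\|f\|_{L^\infty}=O(r^{-1})$, so $\calA_r g\to0=\bbE[g]$ everywhere.

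\emph{Identification of the limit, and the main obstacle.} For $g\in L^1(\Sigma)$ and any $h\in\calI\oplus\calC$ we have $\limsup_r\calA_r g-\liminf_r\calA_r g\le 2\calM(g-h)$ pointwise, so the maximal inequality together with density forces this oscillation to vanish a.e.; thus $\calA_r g$ converges a.e.\ to some $\bar g$. Replacing $\omega$ by $\tau_z\omega$ changes $\calA_r g(\omega)$ only by averages over the thin annulus $\calB_{r+|z|}\setminus\calB_{r-|z|}$, which tend to $0$ once we know (from the previous steps applied to $|g|$) that $\calA_r|g|\to\bbE|g|$ a.e.; hence $\bar g$ is $\tau_z$-invariant for every $z$ and so is a.s.\ constant by ergodicity. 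Finally, $\bbE[\calA_r g]=\bbE[g]$ for every $r$ by invariance, and $\{\calA_r g\}_{r\ge1}$ is uniformly integrable (split $g$ at a large level $M$: the bounded part has uniformly bounded averages, the tail has $L^1$-small averages), so the a.e.\ convergence upgrades to $L^1$-convergence, forcing the constant $\bar g$ to equal $\bbE[g]$ --- the asserted identity. I expect the transference step --- obtaining the maximal inequality with a constant independent of $R$ and passing cleanly to $R\to\infty$ --- to be the principal technical point; the remaining steps are a soft functional-analytic assembly.
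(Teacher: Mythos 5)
The paper does not prove this statement at all: it is quoted as a classical result, with the proof delegated to the cited references (Wiener's original paper and Becker's short proof), so there is no internal argument to compare yours against. Your proposal is a correct and essentially standard proof of the continuous-parameter (here $d=1$) Wiener ergodic theorem: Calder\'on transference of the Hardy--Littlewood weak-type bound gives the maximal inequality with a constant independent of $R$, the decomposition $L^2=\calI\oplus\overline{\calC}$ into invariants plus coboundaries (dense in $L^1$) gives convergence on a dense class, and the Banach principle plus ergodicity and uniform integrability identifies the limit as $\int_\Sigma g\,d\bbP$. This is in the spirit of Wiener's original approach (maximal inequality plus dense class), whereas Becker's cited note instead reduces the multiparameter continuous case to Birkhoff's theorem; either route is legitimate. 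A few points you should make explicit if this were written out in full: (i) Fubini in the transference step and the very definition of $\calA_r g$ require joint measurability of $(y,\omega)\mapsto g(\tau_y\omega)$ (a measurable flow), which is the standing implicit hypothesis here, and measurability of $\sup_{r>0}\calA_r|g|$ should be justified by continuity in $r$; (ii) ``bounded $C^\infty$-vectors'' are smooth only in the $L^2$ sense, so to run the divergence-theorem computation pointwise you should realize the dense class concretely as mollifications $f=\int\psi(z)h(\tau_z\omega)\,dz$ of bounded $h$, for which $y\mapsto f(\tau_y\omega)$ is genuinely smooth for a.e.\ $\omega$; (iii) the invariance argument yields $\bar g\circ\tau_z=\bar g$ a.e.\ for each fixed $z$, while the paper's Definition \ref{D.ergodic} demands exact invariance, so one needs the routine step of replacing $\bar g$ by an exactly invariant modification (or the standard equivalence of the two notions of ergodicity for measurable flows) before concluding $\bar g$ is a.s.\ constant. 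These are standard repairs and do not affect the soundness of the outline.
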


\begin{theorem}[Kingman's Subadditive Ergodic Theorem \cite{kingman1968ergodic}]
Suppose that $\{\tau_y\}$ satisfies the ergodicity assumption.  Let $g_n\in L^1(\Sigma)$ be a sequence of measurable functions on $(\Sigma,\calF,\bbP)$ such that
\[
\inf_{n\in\bbN}\frac1n \int_\Sigma g_n(\omega)d\bbP>-\infty,
\]
and for any $n,m\in\bbN$,
\[
g_{m+n}(\omega)\leq g_n(\omega)+g_m(\tau_n\omega).
\]
Then $\frac{1}{n}g_n$ converges almost everywhere on $\Sigma$ to  a constant. 
\end{theorem}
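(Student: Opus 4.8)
\emph{Proof plan.} Since the statement is Kingman's classical theorem, one option is simply to invoke \cite{kingman1968ergodic}; here I indicate the standard argument, whose plan is to reduce everything to the Birkhoff--Khinchin theorem plus one stopping-time estimate. The first step is a \emph{normalization}. Iterating the hypothesis $g_{m+n}(\omega)\le g_n(\omega)+g_m(\tau_n\omega)$ gives $g_n(\omega)\le\sum_{i=0}^{n-1}g_1(\tau_i\omega)$, so the functions $\tilde g_n:=g_n-\sum_{i=0}^{n-1}g_1\circ\tau_i$ are nonpositive, are again subadditive in the same sense, lie in $L^1(\Sigma)$, satisfy $\inf_n\frac1n\int\tilde g_n\,d\bbP>-\infty$, and have $\tilde g_1\equiv0$. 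Because $\frac1n\sum_{i=0}^{n-1}g_1\circ\tau_i\to\bbE[g_1]$ a.e.\ by Birkhoff--Khinchin, it suffices to prove that $\frac1n\tilde g_n$ converges a.e.\ to the constant $\inf_n\frac1n\int\tilde g_n\,d\bbP$; adding back $\bbE[g_1]$ then shows $\frac1n g_n\to\inf_n\frac1n\int g_n\,d\bbP=\lim_n\frac1n\int g_n\,d\bbP$ a.e.\ (the last limit existing by Fekete's lemma). So from now on I assume $g_n\le0$ and $g_1\equiv0$, and write $\bar\gamma:=\lim_n\frac1n\int g_n\,d\bbP=\inf_n\frac1n\int g_n\,d\bbP\in(-\infty,0]$.

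Next I would show that $\overline g:=\limsup_n\frac1n g_n$ and $\underline g:=\liminf_n\frac1n g_n$ are $\tau_1$-invariant, hence a.e.\ equal to constants $c_+\ge c_-$ by ergodicity: dividing $g_{n+1}(\omega)\le g_1(\omega)+g_n(\tau_1\omega)$ by $n+1$ and letting $n\to\infty$ gives $\overline g\le\overline g\circ\tau_1$ and $\underline g\le\underline g\circ\tau_1$ a.e., and truncating at an arbitrary level $-M$ and using that $\tau_1$ preserves $\bbP$ upgrades these to a.e.\ equalities. Then I would identify $c_+=\bar\gamma$. For the lower bound, since $\frac1n g_n\le0$ the reverse Fatou lemma gives $c_+=\int\overline g\,d\bbP\ge\limsup_n\frac1n\int g_n\,d\bbP=\bar\gamma$ (in particular $c_+$ is finite). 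For the upper bound, fix $N$: iterating subadditivity along $\tau_N$ yields $g_{kN}\le\sum_{j=0}^{k-1}g_N\circ\tau_{jN}$, and for $kN\le n<(k+1)N$ one has $g_n\le g_{kN}$ since $g_{n-kN}\le0$; applying the ergodic theorem to the measure-preserving map $\tau_N$ therefore gives $\overline g\le\frac1N\bbE[g_N\mid\calI_N]$ a.e., where $\calI_N$ is the $\tau_N$-invariant $\sigma$-algebra, and integrating gives $c_+\le\frac1N\int g_N\,d\bbP$ for every $N$, hence $c_+\le\bar\gamma$.

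The main obstacle is the remaining inequality $c_-\ge\bar\gamma$, i.e.\ ruling out downward oscillation of $\frac1n g_n$, which does not follow from subadditivity directly because subadditivity only produces \emph{upper} bounds on $g_n$. I would argue by contradiction. Suppose $c_-\le\bar\gamma-3\eta$ for some $\eta>0$; then for a.e.\ $\omega$ the stopping time $\nu(\omega):=\min\{m\ge1:\ g_m(\omega)\le m(\bar\gamma-2\eta)\}$ is finite. Pick $L$ with $\bbP[\nu>L]$ as small as I wish, and greedily partition $\{0,1,\dots,n-1\}$ by the rule: at a position $j$ with $\nu(\tau_j\omega)\le L$ take a block of length $\nu(\tau_j\omega)$, and at a position with $\nu(\tau_j\omega)>L$ take a singleton block, stopping once fewer than $L$ indices remain. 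Every singleton position $j$ satisfies $\nu(\tau_j\omega)>L$, so by Birkhoff--Khinchin the number of singleton indices among $\{0,\dots,n-1\}$ is at most $\sum_{j<n}\mathbf 1[\nu(\tau_j\omega)>L]=n\bbP[\nu>L]+o(n)$ a.e.; hence the total length $G_n$ of the non-singleton (``good'') blocks is at least $n(1-\bbP[\nu>L])-o(n)$. Summing the subadditivity bound over the blocks — a good block of length $\nu(\tau_j\omega)$ contributes at most $\nu(\tau_j\omega)(\bar\gamma-2\eta)$ by definition of $\nu$, each singleton contributes $g_1\equiv0$, and the leftover contributes at most $0$ — gives $g_n(\omega)\le G_n(\bar\gamma-2\eta)$. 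Since $\bar\gamma-2\eta<0$, dividing by $n$ and taking $\limsup_n$ yields $\overline g\le(1-\bbP[\nu>L])(\bar\gamma-2\eta)$, which is $<\bar\gamma-\eta$ once $\bbP[\nu>L]$ is small enough, contradicting $c_+=\overline g=\bar\gamma$. Hence $c_-=c_+=\bar\gamma$, so $\frac1n g_n\to\bar\gamma$ a.e., and undoing the normalization finishes the proof. I expect this last paragraph — the stopping-time decomposition, combined with the integral identity $c_+=\bar\gamma$ extracted via reverse Fatou — to be the only genuinely nontrivial ingredient; everything else is bookkeeping around Birkhoff--Khinchin.
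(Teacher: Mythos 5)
The paper does not actually prove this statement --- like Wiener's theorem, it is quoted verbatim from \cite{kingman1968ergodic} and then used as a black box in Lemma \ref{L.4.5} --- so there is no argument in the paper to compare yours against. What you wrote is the standard modern proof of Kingman's theorem (normalize by the Birkhoff sums of $g_1$ so that $g_1\equiv 0$ and $g_n\le 0$; identify $\limsup_n\frac1n g_n$ with $\lim_n\frac1n\int g_n\,d\bbP$ via reverse Fatou on one side and the ergodic averages of $g_N$ along $\tau_N$ on the other; rule out downward oscillation by the greedy stopping-time/block decomposition). As a sketch it is essentially correct: the normalization preserves subadditivity and integrability, the block bookkeeping is right (singletons are contained in $\{j:\nu(\tau_j\omega)>L\}$, and the leftover tail of length $<L$ costs nothing because all $g_m\le 0$), and the contradiction with $c_+=\bar\gamma$ indeed appears once $\bbP[\nu>L]$ is small enough; this stopping-time step is, as you say, the only nontrivial ingredient.

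One point deserves flagging, though it is an imprecision already present in the statement rather than a defect special to your argument. In several places (constancy of $\overline g$ and $\underline g$, Birkhoff for $g_1$ and for $\mathbf{1}[\nu>L]$) you use that $\tau_1$-invariant functions are a.e.\ constant, i.e.\ ergodicity of the \emph{time-one map}. The ergodicity of Definition \ref{D.ergodic} concerns sets invariant under the whole group $\{\tau_y\}_{y\in\bbR}$, and this does not imply ergodicity of $\tau_1$: for the translation flow on $\bbR/\bbZ$ the time-one map is the identity, and taking $g_n:=nf$ with $f\le 0$ bounded and nonconstant satisfies every hypothesis of the stated theorem while $\frac1n g_n\equiv f$ is not constant. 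So, read literally, the hypotheses only yield an a.e.\ limit that is $\tau_1$-invariant; constancy needs either $\tau_1$-ergodicity (surely the intended reading, and the setting of \cite{kingman1968ergodic}) or an extra argument as in the paper's application, where $T_q(x_0+n;x_0,\tau_y\omega)=T_q(x_0+y+n;x_0+y,\omega)$ together with the bounds of Lemma \ref{L.2.6} shows the limit is invariant under every $\tau_y$, and then the group ergodicity applies. With that hypothesis understood, your proof is complete modulo routine details.
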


The term $\int_{z-M}^z\frac{1}{a(y,\omega)dy}$ in the above formula satisfies an asymptotic property. Indeed, since $a(x,\omega)$ is positive and strictly away from both $0$ and $\infty$, and stationary ergodic,
it follows from the Birkhoff--Khinchin Theorem that
\beq\lb{2.10}
\lim_{M\to \infty}M^{-1}\int_{z-M}^z\frac{1}{a(y,\omega)}dy=\lim_{M\to \infty}M^{-1}\int_{0}^M\frac{1}{a(z-y,\omega)dy=}\frac1{\ol a}
\eeq
where $\ol a$ is a constant that is independent of both $z$ and $\omega$.

Moreover, we have the following uniform convergence. This will be crucial in the proof of existence of deterministic homogenized boundary velocity. The proof is essentially similar to the one of Proposition 3.4 \cite{lin2019stochastic}, though the problems appear to be very different.

\begin{lemma}[Uniform convergence to the mean]\lb{L.2.2} 
Suppose $a: \R\times \Sigma \rightarrow \R$ is uniformly positive and finite, and satisfies the stationary ergodic assumption. 
Then, for any $R>0$, there exists $\Sigma_0\subseteq\Sigma$ of full measure such that for every $\omega\in\Sigma_0$,  
\beq\lb{2.3}
\lim_{M\to \infty}\sup_{ |z|\leq MR}\left|M^{-1}\int_{-M}^0\frac{1}{a( y,\tau_z\omega)}dy-\frac1{\ol a}\right|=0.
\eeq
\end{lemma}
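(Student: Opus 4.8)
The plan is to deduce the uniform-in-$z$ convergence from the ordinary Birkhoff--Khinchin convergence \eqref{2.10} by an equicontinuity-in-$z$ argument combined with a covering of the parameter set $\{|z|\le MR\}$ by a mesh that grows mildly with $M$. Set $\Phi_M(z,\omega):=M^{-1}\int_{-M}^0 a(y,\tau_z\omega)^{-1}\,dy = M^{-1}\int_{z-M}^{z}a(y,\omega)^{-1}\,dy$, using stationarity. By Birkhoff--Khinchin, for each fixed $z$ (in particular on the countable dense set $\bbQ$), $\Phi_M(z,\cdot)\to 1/\ol a$ almost surely; intersecting countably many full-measure sets gives $\Sigma_0$ on which $\Phi_M(z,\omega)\to 1/\ol a$ for all $z\in\bbQ$. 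The issue is upgrading this pointwise (in $z$) statement to a uniform one over the $M$-dependent window $|z|\le MR$.

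The key observation is a two-scale splitting of the $z$-variation of $\Phi_M$. First, for $z, z'$ with $|z-z'|\le \lambda$, one has $|\Phi_M(z,\omega)-\Phi_M(z',\omega)|\le \frac{2\lambda}{M}\|a^{-1}\|_\infty$ directly from the integral formula (the two intervals $[z-M,z]$, $[z'-M,z']$ differ by two pieces of length $\le\lambda$). So if I cover $[-MR,MR]$ by $O(M^2 R/\lambda)$ points at spacing $\lambda$, the oscillation of $\Phi_M$ between mesh points is at most $2\lambda\|a^{-1}\|_\infty/M$, which I can make $\le \eps/2$ by choosing, say, $\lambda = \eps M/(4\|a^{-1}\|_\infty)$ — but then the number of mesh points is $O(M R\|a^{-1}\|_\infty/\eps)$, i.e.\ it grows linearly in $M$, so I cannot simply take a union bound over a fixed countable set. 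This is the main obstacle: the "grid" is not fixed. I would resolve it exactly as in Proposition~3.4 of \cite{lin2019stochastic}: rescale the grid points to lie in a \emph{fixed} set. Specifically, writing a grid point as $z_j = j\lambda$ with $|j|\lesssim M/\eps$, note $\Phi_M(j\lambda,\omega)$ only depends on the averaging window $[j\lambda - M, j\lambda]$; splitting this window into the $M/\lambda$ sub-blocks of length $\lambda$ and using stationarity, $\Phi_M(z_j,\omega)$ is itself an average of i.i.d.-in-law shifted quantities, so one reduces to a maximal inequality over shifts that Birkhoff--Khinchin (applied to the stationary sequence indexed by the coarse lattice $\lambda\bbZ$, or equivalently a second application of the ergodic theorem) controls uniformly.

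Concretely, the steps I would carry out are: \emph{(1)} Record the Lipschitz-in-$z$ bound $|\Phi_M(z,\omega)-\Phi_M(z',\omega)|\le \frac{2\|a^{-1}\|_\infty}{M}|z-z'|$ and fix $\eps>0$, $\lambda=\lambda(\eps,M)$ so that oscillation over cells of size $\lambda$ is $<\eps/2$. \emph{(2)} Reduce \eqref{2.3} to showing $\sup_{|z|\le MR,\; z\in\lambda\bbZ}|\Phi_M(z,\omega)-1/\ol a|<\eps/2$ eventually. \emph{(3)} For this supremum over $\sim MR/\lambda$ lattice points, apply the maximal ergodic theorem / the uniform version of Birkhoff--Khinchin along the lattice: since each $\Phi_M(z,\omega)$ is a Cesàro-type average of $M/\lambda$ terms of a stationary process, $\max_{|z|\le MR,\,z\in\lambda\bbZ}|\Phi_M(z,\omega)-1/\ol a|\to 0$ a.s.\ — this is the place where one genuinely needs more than plain Birkhoff--Khinchin, and where I'd invoke the argument of \cite{lin2019stochastic} verbatim (it uses Wiener's ergodic theorem, already quoted in the excerpt, to get the averaging uniform in the base point). \emph{(4)} Combine (1)–(3): for $M$ large, every $z$ with $|z|\le MR$ is within $\lambda$ of a lattice point $z_j$, so $|\Phi_M(z,\omega)-1/\ol a|\le |\Phi_M(z,\omega)-\Phi_M(z_j,\omega)| + |\Phi_M(z_j,\omega)-1/\ol a| < \eps/2+\eps/2=\eps$. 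Taking a countable sequence $\eps=1/n$ and intersecting the corresponding full-measure sets yields the desired $\Sigma_0$. The only real subtlety is step (3) — making the ergodic averaging uniform over a base-point set that itself scales with $M$ — and I would handle it by the same device as the cited proposition, namely by passing to the discrete lattice $\lambda\bbZ$ and using Wiener's (multiparameter) ergodic theorem to bound the maximum over growing windows.
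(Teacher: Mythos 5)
Your step (1) is exactly the continuity estimate the paper also uses (the difference of the two sliding averages is bounded by $CM^{-1}|z-z'|$ with $C$ depending only on $\|1/a\|_\infty$), and you correctly identify that the whole difficulty is to make the ergodic averaging uniform over base points $z$ ranging in a window whose size $MR$ scales with the averaging length. However, step (3), which you yourself flag as the crux, is left as a black box and the sketch you give for it does not work as stated. With your choice $\lambda=\eps M/(4\|a^{-1}\|_\infty)$ the mesh has only $O(R\|a^{-1}\|_\infty/\eps)$ points (not $O(M)$ as you write), but those points move with $M$, so the pointwise Birkhoff--Khinchin limits at fixed $z$ are of no direct use; and the lattice $\lambda\bbZ$ itself depends on $M$ and $\eps$, so there is no fixed stationary sequence to which a ``maximal ergodic inequality over shifts'' can be applied. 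Moreover each $\Phi_M(z_j,\omega)$ is then an average of only $O(1)$ blocks (which are stationary, not ``i.i.d.-in-law''), so no self-averaging is gained by the block decomposition. If instead you fix $\lambda$ independent of $M$, the number of lattice points grows linearly in $M$ and you are back to needing precisely the statement being proved, now over a growing discrete window; plain Birkhoff, or a union bound, cannot deliver this, so as written the argument is circular at its key step, deferred entirely to the citation.

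The missing ingredient, which is how the paper (following Lin--Zlato\v{s}) actually closes this step, is Egorov's theorem used in tandem with Wiener's ergodic theorem applied to an indicator function: Egorov upgrades the a.s.\ convergence $M^{-1}\int_{-M}^0 a(y,\omega)^{-1}dy\to 1/\ol a$ to convergence uniform over $\omega$ in a set $D_\delta$ with $\bbP[D_\delta]\geq 1-\delta$; Wiener's theorem applied to $\chi_{D_\delta}$ then shows that for a.e.\ $\omega$ and all large $M$ the shifts $z\in[-2MR,2MR]$ with $\tau_z\omega\in D_\delta$ occupy a fraction at least $1-2\delta$ of that interval, so every $x$ with $|x|\leq MR$ admits a ``good'' $z$ with $|x-z|\leq 4\delta MR$; finally your Lipschitz bound with constant $C/M$ converts this distance into an error $\leq 4CR\delta$, and $\delta\downarrow 0$ along a countable sequence gives $\Sigma_0$. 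Note that in this scheme no discretization or mesh is needed at all: the density of good shifts replaces the grid, and the $M$-dependence of the window is absorbed exactly because the Lipschitz constant is $C/M$ while the distance to a good shift is $\lesssim \delta MR$. Your proposal never invokes Egorov, and your description of how Wiener's theorem enters (``a maximal inequality over shifts'') is not the mechanism of the cited proposition, so the proof as proposed has a genuine gap at its central step.
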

\begin{proof}
Let $\Sigma'\subseteq\Sigma$ be such that $\bbP[\Sigma']=1$ and for each $\omega\in\Sigma'$ we have that \eqref{2.10} holds.
Then by Egorov's Theorem, for any $\delta>0$, there exists $M_\delta>0$ and $D_\delta\subseteq\Sigma$ such that for all $\omega\in D_\delta $ and $M\geq M_\delta$,
\[
\bbP[D_\delta ]\geq 1-\delta \quad\text{and}\quad \left| M^{-1}\int_{-M}^0\frac{1}{a(y,\omega)}dy-\frac1{\ol a}\right|\leq \delta.
\]
Wiener's Ergodic Theorem yields that there exists $\Sigma_\delta\subseteq\Sigma$ with $\bbP[\Sigma_\delta]=1$ such that for all $\omega\in\Sigma_\delta$ we have
\[
\lim_{r\to\infty}\frac{1}{|\calB_r|}\int_{\calB_r}\chi_{D_\delta }(\tau_y\omega)dy=\bbP[D_\delta ]\geq 1-\delta.
\]
Therefore, for each $\omega\in\Sigma_\delta$ and $R>0$, we can find $M_{\delta,\omega}$ such that for all $M\geq M_{\delta,\omega}$,
\beq\lb{2.4}
\left|\{z\in \calB_{2MR}\mid \tau_z\omega\in D_\delta \}\right|\geq (1-2\delta)4MR.
\eeq

Now we fix $R>0$, $\omega\in \Sigma_\delta$ and $M\geq M_{\delta,\omega}$. Take any $x\in \calB_{MR}$, and then it follows from \eqref{2.4} that there exists $z\in \calB_{2MR}$ such that $|z-x|\leq 4\delta MR$ and $\tau_z\omega\in D_\delta $. On the other hand, note that
\begin{align*}
&\left|\int_{-M}^0\frac{1}{a(y+x,\omega)}dy-\int_{-M}^0\frac{1}{a(y+z,\omega)}dy\right| \\
&\qquad\quad= \left|\int_{x-M}^x\frac{1}{a(y,\omega)}dy-\int_{z-M}^z\frac{1}{a(y,\omega)}dy\right|\leq C|x-z|,    
\end{align*}
where, to obtain the inequality, we used that $a$ is uniformly strictly positive. Therefore, since $a(y,\tau_x\omega)=a(y+x,\omega)$ and $\tau_z\omega\in D_\delta $, we get
\begin{align*}
&\left| M^{-1}\int_{-M}^0\frac{1}{a(y,\tau_x\omega)}dy-\frac1{\ol a}\right|\\
&\qquad\quad\leq M^{-1}\left| \int_{-M}^0\frac{1}{a(y,\tau_x\omega)}-\frac{1}{a(y,\tau_z\omega)} dy\right|+\left| M^{-1}\int_{-M}^0\frac{1}{a(y,\tau_z\omega)}dy-\frac1{\ol a}\right|\\
&\qquad\quad\leq CM^{-1}|x-z|+\delta \leq (4CR+1)\delta.   
\end{align*}
Since this holds for all $x$ such that $|x|\leq MR$, $\omega\in\Sigma_\delta$ and $M\geq M_{\delta,\omega}$, and since $\delta>0$ is arbitrary, this finishes the proof.
\end{proof}

\subsection{Interior homogenization}
In the last part of this section, we prove interior stochastic homogenization for elliptic equations. We are concerned with the following elliptic equation
\beq\lb{8.1}
-\partial_x(A(x,\eps^{-1}x,\omega) \partial_x u_\eps)=F(x,\eps^{-1}x,\omega)\quad\text{ in }[l,r]
\eeq
for some $r>l$. The following two lemmas are classical and there are a lot of works for the case when $F(x,y,\omega)\equiv F(x,\omega)$, see, for instance \cite{armstrong2014regularity,bella2017stochastic,gu2017high}. We provide a simple proof, via the explicit formula of solutions, for the general equation but only with space dimension $1$.

\begin{lemma}[Interior homogenization]\lb{L.9.1}
Suppose that $1/A>0$ and $F\geq 0$ are uniformly finite. There exist deterministic functions $\ol A(x)$ and $\ol F(x)$,  and $\Sigma_0\subseteq\Sigma$ of full measure such that the following holds for all $\omega\in\Sigma_0$. Let $a,b,a_\eps,b_\eps\in\bbR$ and suppose
\[
\lim_{\eps\to0}a_\eps=a\quad\text{and}\quad \lim_{\eps\to0}b_\eps=b.
\]
If $u_\eps(x,\omega)$ satisfies \eqref{8.1} in $[l,r]$ with boundary data  $u_\eps(l)=a_\eps$ and $u_\eps(r)=b_\eps$, then
\[
u_\eps(\cdot,\omega) \to \ol u(\cdot)\quad\text{ in }[l,r]
\]
locally uniformly, where $\ol u(x)$ is the unique solution to 
\beq\lb{7.2}
 -\partial_x(\overline A(x) \partial_x \overline u)=\ol F(x) \quad \text{ for }x\in [l,r]
\eeq
which satisfies the boundary data $\ol u(l)=a$ and $\ol u(r)=b$.

Finally, if $A$ and $F$ satisfy Assumption \ref{assumption} (i), then $\overline{A}$ and $\ol{F}$ satisfy Assumption \ref{assump2} \ref{item:assump1.1.1}.
\end{lemma}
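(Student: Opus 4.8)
The plan is to exploit the one-dimensional structure of \eqref{8.1}: the equation can be integrated twice, so that $u_\eps$ is given by an explicit formula whose only $\eps$-dependence sits inside integrals of rapidly oscillating stationary functions, and homogenization then reduces to an ergodic-averaging statement for such integrals. Concretely, set $a_\eps(y):=A(y,\eps^{-1}y,\omega)$, $f_\eps(y):=F(y,\eps^{-1}y,\omega)$, and $\beta_\eps(y):=\int_l^y f_\eps$. Integrating $-\partial_x(a_\eps u_\eps')=f_\eps$ once gives $u_\eps'=(p_\eps-\beta_\eps)/a_\eps$ for a constant $p_\eps$ (the flux at $l$), and integrating again,
\[
u_\eps(x)=a_\eps+p_\eps\,\alpha_\eps(x)-\gamma_\eps(x),\qquad \alpha_\eps(x):=\int_l^x\frac{dy}{a_\eps(y)},\qquad \gamma_\eps(x):=\int_l^x\frac{\beta_\eps(y)}{a_\eps(y)}\,dy,
\]
with $p_\eps=\bigl(b_\eps-a_\eps+\gamma_\eps(r)\bigr)/\alpha_\eps(r)$ forced by $u_\eps(r)=b_\eps$. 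Since $1/a_\eps$ and $f_\eps$ are uniformly bounded, $\alpha_\eps$ (strictly increasing) and $\beta_\eps$ are uniformly Lipschitz, hence $\{u_\eps\}$ is equi-Lipschitz on $[l,r]$; so it suffices to identify the pointwise limits of $\alpha_\eps,\beta_\eps,\gamma_\eps$ and then upgrade to uniform convergence by equicontinuity.

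The heart of the argument is a two-scale ergodic-averaging lemma: if $\psi(x,y,\omega)$ is uniformly bounded, stationary ergodic in $y$ for each fixed $x$, and uniformly continuous in $x$, then there is a full-measure set on which, for every compact $K\subset\bbR$,
\[
\sup_{x\in K}\Bigl|\int_0^x\psi(y,\eps^{-1}y,\omega)\,dy-\int_0^x\langle\psi\rangle(y)\,dy\Bigr|\to0\quad\text{as }\eps\to0,\qquad \langle\psi\rangle(y):=\bbE[\psi(y,0,\cdot)].
\]
I would prove this by partitioning $K$ into intervals $I_j$ of length $\eta$ with endpoints in a fixed countable dense set, replacing $\psi(y,\eps^{-1}y,\omega)$ by $\psi(x_j,\eps^{-1}y,\omega)$ on $I_j$ (the error being controlled by the modulus of continuity of $\psi$ in $x$), rescaling $y=\eps z$, and using the Birkhoff--Khinchin theorem for $z\mapsto\psi(x_j,z,\omega)$ to get $\int_{I_j}\psi(x_j,\eps^{-1}y,\omega)\,dy\to\eta\,\langle\psi\rangle(x_j)$; summing over $j$ and then letting $\eta\to0$ (using continuity of $\langle\psi\rangle$ to handle the resulting Riemann sums) gives the claim, with the full-measure set obtained as a countable intersection over the partition points and over an exhaustion of $\bbR$. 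Applying this with $\psi=1/A$ and $\psi=F$, and writing $\int_l^x=\int_0^x-\int_0^l$, produces a single full-measure $\Sigma_0$ (independent of $l,r$ and of the boundary data) on which $\alpha_\eps\to\alpha_0:=\int_l^\cdot\langle1/A\rangle$ and $\beta_\eps\to\beta_0:=\int_l^\cdot\langle F\rangle$ uniformly on $[l,r]$.

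With these in hand, I would pass to the limit in the formula. Writing $\gamma_\eps=\int_l^\cdot\beta_\eps\,d\alpha_\eps$ and using uniform convergence of $\alpha_\eps,\beta_\eps$ together with a Stieltjes integration by parts (legitimate since the $\alpha_\eps$ have uniformly bounded variation and $\beta_0$ has bounded variation) gives $\gamma_\eps\to\gamma_0:=\int_l^\cdot\beta_0\,d\alpha_0$ uniformly; as $\alpha_0(r)>0$ this also gives $p_\eps\to p_0:=(b-a+\gamma_0(r))/\alpha_0(r)$, whence $u_\eps\to\bar u:=a+p_0\alpha_0-\gamma_0$ uniformly on $[l,r]$, with $\bar u(l)=a$ and $\bar u(r)=b$. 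Differentiating, $\bar u'=(p_0-\beta_0)\alpha_0'$, i.e.\ $\bar A\,\bar u'=p_0-\beta_0$ with $\bar A:=1/\alpha_0'=\langle1/A\rangle^{-1}$, and differentiating once more, $-\partial_x(\bar A\,\bar u')=\beta_0'=\langle F\rangle=:\bar F$; so $\bar u$ is the unique solution of \eqref{7.2} for the effective coefficients $\bar A(x)=\bigl(\bbE[1/A(x,0,\cdot)]\bigr)^{-1}$ and $\bar F(x)=\bbE[F(x,0,\cdot)]$. For the last assertion, under Assumption \ref{assumption}(i) $A$ is bounded above and below and $F$ is bounded above and below, so $\bar A,\bar F$ are uniformly positive and bounded; differentiating under the expectation (valid since $A$ is bounded below and $A_x$ is bounded) and using the bounds on $A,A_x$ shows $x\mapsto\langle1/A\rangle(x)$ is Lipschitz with $\sigma$-H\"older derivative, and since it is bounded below $\bar A=\langle1/A\rangle^{-1}$ inherits this, so $\bar A$ is Lipschitz and $\bar A_x$ is $\sigma$-H\"older; an analogous, simpler estimate shows $\bar F$ inherits the slow-variable regularity of $F$. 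Hence $\bar A,\bar F$ satisfy Assumption \ref{assump2} \ref{item:assump1.1.1}.

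The only genuinely delicate point is the two-scale averaging step: organizing the two successive limits ($\eps\to0$, then the mesh $\eta\to0$), the freezing of the slow variable, and the bookkeeping of null sets so as to obtain a single $\Sigma_0$ valid uniformly in $x$ and independent of $(a,b,a_\eps,b_\eps)$. (If one prefers, the uniform convergence of ergodic averages established in Lemma \ref{L.2.2} can be used in place of the bare Birkhoff--Khinchin theorem on each $I_j$.) Everything else --- the explicit formula, the Stieltjes convergence, and the regularity transfer to $\bar A,\bar F$ --- is routine one-dimensional computation.
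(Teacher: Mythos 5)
Your proposal is correct and follows essentially the same route as the paper: integrate the one-dimensional equation twice to obtain an explicit formula for $u_\eps$, identify the limits of the oscillatory integrals by ergodic averaging (yielding $\ol A=(\bbE[1/A(\cdot,0,\cdot)])^{-1}$ and $\ol F=\bbE[F(\cdot,0,\cdot)]$, exactly the paper's \eqref{eq:barA barF}), pass to the limit in the formula, and read off the regularity of $\ol A,\ol F$ from these expressions. Your explicit two-scale freezing lemma, the countable-dense-set bookkeeping of null sets, and the equicontinuity upgrade to uniform convergence merely supply details that the paper's invocation of Birkhoff--Khinchin plus dominated convergence leaves implicit.
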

\begin{proof}
After integration, we get for $x\in [l,r]$,
\[\lb{9.1}
u_\eps(x,\omega)=   a_\eps+\int_l^{x}\frac{C_\eps}{A(y,\eps^{-1}y,\omega)}-\frac{\int_{l}^{y}F(z,\eps^{-1}z,\omega)dz}{A(y,\eps^{-1}y)}dy
\]
where  $C_\eps$ is determined by
\[
C_\eps:=  \left(\int_{l}^{r}\frac{dy}{A(y,\eps^{-1}y,\omega)}\right)^{-1} \left(\int_{l}^{r}\frac{\int_{l}^{y}F(z,\eps^{-1}z,\omega)dz}{A(y,\eps^{-1}y,\omega)}dy +b_\eps-a_\eps\right).
\]   
Since both $1/A(x,\cdot)$ and $F(x,\cdot)$ satisfy the stationary ergodic assumptions (for any fixed $x$), and they are uniformly bounded, we get from Birkhoff--Khinchin Theorem that for any $x>l$, and for almost all $\omega$,
\[
\lim_{\eps\to0}\int_l^{x}\frac{dy}{A(y,\eps^{-1}y,\omega)}=\int_l^{x}\frac{dy}{\ol A(y)},
\]
\[
\lim_{\eps\to0}\int_{l}^{x}F(z,\eps^{-1}z,\omega)dz=\int_{l}^{x}\ol F(z)dz
\]
where 
\begin{equation}
    \label{eq:barA barF}
{1}/{\ol A(y)}=\bbE[1/A(y,0,\omega)]\quad\text{and}\quad {\ol F(y)}=\bbE[F(y,0,\omega)].
\end{equation}
By the dominated convergence theorem, $u_\eps$ converges almost surely to 
\[
\ol u(x):=a+\int_l^{x}\frac{\ol C}{\ol A(y)}-\frac{\int_{l}^{y}\ol F(z)dz}{\ol A(y)}dy
\]
where
\[
\ol C:=  \left(\int_{l}^{r}\frac{dy}{\ol A(y)}\right)^{-1} \left(\int_{l}^{r}\frac{\int_{l}^{y}\ol F(z)dz}{A(y)}dy +b-a\right).
\]
It can be checked directly that $\ol u$ is the unique solution $\ol u$ to \eqref{7.2} satisfying $\ol u(l)=a$ and $\ol u(r)=b$.

Finally, the last claim holds due to \eqref{eq:barA barF}.
\end{proof}

If the boundary data is not fixed, we have the following result.
\begin{lemma}[Interior homogenization with variable boundary data]\lb{L.9.2}
For almost all fixed $\omega$,
suppose that $u_\eps$ satisfies \eqref{8.1} in $[l,r]$, and $u_\eps(l,\omega)$ and $u_\eps(r,\omega)$ are uniformly bounded for all $\eps>0$. Then
$
\limsup_{\eps\to 0} u_\eps(\cdot,\omega)
$
is a subsolution to \eqref{7.2} in $(l,r)$, and
$
\liminf_{\eps\to 0} u_\eps(\cdot,\omega)$
is a supersolution to \eqref{7.2} in $(l,r)$.
\end{lemma}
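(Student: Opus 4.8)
The strategy is the standard half-relaxed-limit argument, with Lemma \ref{L.9.1} (fixed-boundary-data interior homogenization) supplying the only real input. The key preliminary observation is that $u_\eps$ is \emph{equi-Lipschitz} on $[l,r]$, uniformly in $\eps$: writing the flux $q_\eps:=A(\cdot,\eps^{-1}\cdot,\omega)\,\partial_x u_\eps$, integration of \eqref{8.1} gives $q_\eps(x)=q_\eps(l)-\int_l^x F(z,\eps^{-1}z,\omega)\,dz$, and $q_\eps(l)$ is determined explicitly by $u_\eps(l)$, $u_\eps(r)$, $\int_l^r A^{-1}$ and $\int_l^r A^{-1}\!\int_l^{\cdot}F$ exactly as in the proof of Lemma \ref{L.9.1}. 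Since $A^{-1}$ and $F$ are uniformly finite and $u_\eps(l),u_\eps(r)$ are bounded, $q_\eps$ is bounded uniformly in $\eps$, hence so is $\partial_x u_\eps=q_\eps/A(\cdot,\eps^{-1}\cdot,\omega)$, and so is $u_\eps$. Therefore $\bar u:=\limsup_{\eps\to0}u_\eps$ and $\underline u:=\liminf_{\eps\to0}u_\eps$ are finite and locally Lipschitz on $(l,r)$, and along any sequence $\eps_k\to0$ some subsequence of $(u_{\eps_k})$ converges uniformly on compact subsets of $(l,r)$, by Arzel\`a--Ascoli.

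I will show $\bar u$ is a subsolution of \eqref{7.2}; the argument for $\underline u$ is the mirror image. Fix $x_0\in(l,r)$ and a $C^2$ function $\phi$ with $\phi\ge\bar u$ on $[x_0-h,x_0+h]\subset(l,r)$ and $\phi(x_0)=\bar u(x_0)$ (such an $h>0$ exists by the definition of a test function touching $\bar u$ from above); the goal is $-(\overline A\phi_x)_x(x_0)\le\overline F(x_0)$. Pick $\eps_k\to0$ with $u_{\eps_k}(x_0)\to\bar u(x_0)$, and extract a subsequence (not relabeled) converging uniformly on $[x_0-h,x_0+h]$ to some continuous $\bar w$. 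Then $\bar w(x_0)=\bar u(x_0)$, and $\bar w(x)=\lim_k u_{\eps_k}(x)\le\limsup_{\eps\to0}u_\eps(x)=\bar u(x)$ for every $x\in[x_0-h,x_0+h]$. Moreover, uniform convergence gives $u_{\eps_k}(x_0\pm h)\to\bar w(x_0\pm h)$, so Lemma \ref{L.9.1} applied on $[x_0-h,x_0+h]$ (valid on the full-measure event on which that lemma holds) identifies $\bar w$ on that interval as the unique solution of \eqref{7.2} with boundary data $\bar w(x_0\pm h)$.

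Combining the displays, $\phi\ge\bar u\ge\bar w$ on $[x_0-h,x_0+h]$ with $\phi(x_0)=\bar w(x_0)$, so $\phi$ touches the solution $\bar w$ of \eqref{7.2} from above at the interior point $x_0$; since $\bar w$ is in particular a viscosity subsolution of \eqref{7.2} (indeed $\bar w\in C^2$ because $\overline A\in C^{1,\sigma}$ and $\overline F$ is Lipschitz by Lemma \ref{L.9.1}, so this is just the classical comparison of derivatives at $x_0$ together with $\overline A>0$), we obtain $-(\overline A\phi_x)_x(x_0)\le\overline F(x_0)$, as desired. Thus $\bar u$ is a viscosity subsolution of \eqref{7.2} in $(l,r)$. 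For $\underline u$: given a $C^2$ function $\psi$ touching $\underline u$ from below at an interior point $x_0$, pick $\eps_k\to0$ with $u_{\eps_k}(x_0)\to\underline u(x_0)$, extract a locally uniform limit $\underline w$ satisfying $\underline w\ge\underline u$ and, by Lemma \ref{L.9.1}, solving \eqref{7.2} near $x_0$; then $\psi\le\underline u\le\underline w$ with equality at $x_0$ forces $-(\overline A\psi_x)_x(x_0)\ge\overline F(x_0)$.

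I expect the only point requiring care to be the order of quantifiers over $\omega$, rather than any hard analysis: Lemma \ref{L.9.1} is stated on a fixed interval $[l,r]$ with a fixed full-measure set, but its proof rests only on the Birkhoff--Khinchin convergence, which in the form recorded in the excerpt holds on one full-measure set simultaneously for all base points. Consequently the same $\Sigma_0$ serves every subinterval $[x_0-h,x_0+h]$ at once, and ``for almost all fixed $\omega$'' in the statement of Lemma \ref{L.9.2} refers precisely to this $\Sigma_0$ (no new null sets are introduced, since the equi-Lipschitz bound and Arzel\`a--Ascoli are deterministic given $\omega$).
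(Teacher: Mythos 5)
Your proposal is correct, but it takes a genuinely different route from the paper. The paper proves only the supersolution half (the other being symmetric) and argues globally: using the uniform continuity of $u_\eps$ it selects a sequence $\eps_k\to 0$ realizing $\liminf_\eps u_\eps$ on all of $[l,r]$, splits it into subsequences along which the boundary values $u_{\eps_k}(l),u_{\eps_k}(r)$ converge, identifies each subsequential limit via Lemma \ref{L.9.1} on the \emph{fixed} interval $[l,r]$ as an exact solution of \eqref{7.2}, and concludes because $u_*$ is then an infimum of such solutions, hence a supersolution. You instead run the standard local half-relaxed-limit argument: an equi-Lipschitz bound (which you derive cleanly from the flux formula, and which the paper uses only implicitly as ``uniform continuity''), Arzel\`a--Ascoli along a sequence realizing the limsup/liminf at the touching point, Lemma \ref{L.9.1} applied on a small subinterval $[x_0-h,x_0+h]$ with the boundary data of the local uniform limit, and then the elementary comparison of derivatives at the touching point. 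What your approach buys: it is local, avoids the paper's somewhat delicate step of organizing \emph{all} subsequential limits of the boundary data (the paper asserts there are at most countably many limit values, which is the weakest point of its argument), and it makes the compactness explicit. What the paper's approach buys: it applies Lemma \ref{L.9.1} only on the original interval $[l,r]$, so it never has to worry about the almost-sure set being uniform over subintervals, whereas you must justify (as you do) that the same full-measure set serves every subinterval --- which is indeed available here because the Birkhoff--Khinchin statement in the paper holds on one null-set-complement simultaneously for all base points, or alternatively by intersecting over rational endpoints and using your equi-Lipschitz bound to pass to arbitrary ones. With that quantifier point made explicit, your proof is complete and, if anything, tighter than the paper's.
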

\begin{proof}
Let us only prove that $
u_* (\cdot,\omega):=\liminf_{\eps\to 0} u_\eps(\cdot,\omega)$ is a supersolution. Since $u_\eps$ is uniformly continuous, $u_* (x,\omega)=\liminf_{\eps\to 0} u_\eps(x,\omega)$ for all $x\in [l,r]$ is equivalent to $u_* (x,\omega)=\liminf_{\eps\to 0} u_\eps(x,\omega)$ for all rational numbers $x\in [l,r]$. Thus, there exists a sequence of $\eps_k\to 0$ such that
\[
u_* (\cdot,\omega)=\liminf_{k\to\infty} u_{\eps_k}(\cdot,\omega)\quad\text{ in }[r,l].
\]
Consequently, there are at most countably many real values $\{a_i\}_{i\in\bbN}$ that are limits of subsequences of $\{\eps_k\}_{k\in\bbN}$. We denote those subsequences as $\{\eps_k^i\}_{k\in\bbN}$, and for each $i$, we have
\[
\lim_{k\to \infty}u_{\eps_k^i}(l,\omega)= a_i.
\]
Furthermore, by passing to a subsequence of $\eps_k^i$, we can assume that
\[
\lim_{k\to\infty}u_{\eps_k^i}(r,\omega)= \liminf_{k\to\infty}u_{\eps_k^i}(r,\omega)=:b^i.
\]
By the comparison principle for linear elliptic equations, we have
\[
\lim_{k\to\infty}u_{\eps_k^i}(\cdot,\omega)= \liminf_{k\to\infty}u_{\eps_k^i}(\cdot,\omega)\quad\text{ in }[r,l].
\]
It follows that
\[
u_* (\cdot,\omega):=\inf_{i\in\bbN}\{\lim_{k\to\infty} u_{\eps_k^i}(\cdot,\omega)\}.
\]
Note that by Lemma \ref{L.9.1}, $\lim_{k\to\infty} u_{\eps_k^i}(\cdot,\omega)$ is a solution to \eqref{7.2} for each $i$ with boundary data $(a_i,b_i)$. Therefore, $u_* $ is a supersolution.
\end{proof}

\section{The effective velocity}\lb{S.4}
The main task is to identify the homogenized boundary behavior; that is, to find $\ol V$ in \eqref{1.3}. 
For this purpose, we first construct an effective equation --- an ODE --- that describes the motion of the free boundary  (Section \ref{ss:effective equation}). Then, in Section \ref{ss:effective velocity}, we show that the arrival time to any $x\in \R$ is subadditive; applying the subadditive ergodic theorem yields the homogenized arrival time,   which we then use to define the homogenized velocity $\ol V$. Finally, in Section \ref{ss:continuity V}, we establish that $\ol V$ is continuous.

Throughout the section, we work with generic coefficients the coefficients $a$, $b$, and $g$ satisfying:
\begin{assumption}
\label{assump5.1}
    We assume that the following hold for
    \[
    a, b, g: \R\times \Sigma \rightarrow \R.
    \]
\begin{enumerate}[(i)]
\item The coefficients $a$ and $b$ are uniformly positive and finite, and $g\geq 0$ is finite;
\item The coefficients $a$, $b$, and $g$ are uniformly Lipschitz continuous in $x$;
\item For each fixed $x$, the coefficients $a(x, \cdot)$, $b(x,\cdot)$, and $g(x,\cdot)$  satisfy the stationary ergodic assumption. 
\end{enumerate}
\end{assumption}

Later, we will apply the results in this section to equation \eqref{1.1}  with coefficients ``frozen" at some $x_0\in\bbR$, as in \eqref{eq:frozen}. 
\subsection{The effective equation}
\label{ss:effective equation}

\subsubsection{Definition and basic properties}
Motivated by the hueristics presented in Section \ref{sss:intro hom},  we introduce the following ODE for $S_q^{x_0}(t)$, which we expect characterizes the effective location of the free boundary of \eqref{eq:motivating pde}:
\beq\lb{FB}
\frac{d}{dt}S_{q}^{x_0}(t, \omega)=q b(S_{q}^{x_0}(t,\omega),\omega) \mu(S_{q}^{x_0}(t,\omega),\omega)+g(S_{q}^{x_0}(t,\omega),\omega)
\eeq
with initial condition  $S_{q}^{x_0}(0,\omega)=x_0$ and with the notation
\beq\lb{3.14}
 \mu(x,\omega):=\ol a /{a(x,\omega)}.
\eeq
We remark that this equation is independent of $r$. For any $y\in\bbR$, since
\[
(b(S,\tau_y\omega), \mu(S,\tau_y\omega),g(S,\tau_y\omega))=(b(S+y,\omega) ,\mu(S+y,\omega),g(S+y,\omega) )
\]
the ODE yields that
\beq\lb{2.68}
S_q^{x_0+y}(t,\omega)=S_q^{x_0}(t,\tau_y\omega).
\eeq

Standard Cauchy-Lipschitz theory, together with our assumptions on $a$ and $b$, imply:

\begin{lemma}[Well-posedness of \eqref{FB}]\lb{L.2.3}
There exists a solution $S$ to the effective equation \eqref{FB} with $S(0)=x_0$; that is, $S$ is both a subsolution and a supersolution and $S(0)=x_0$. And, \eqref{FB} enjoys the standard comparison principle for ODEs.

\end{lemma}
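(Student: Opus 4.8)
The statement is a routine application of the Cauchy--Lipschitz (Picard--Lindel\"of) theorem once one checks that the right-hand side of \eqref{FB} is Lipschitz in the state variable and controlled in a way that precludes finite-time blow-up. Write the right-hand side as
\[
\Phi(x,\omega):=q\,b(x,\omega)\,\mu(x,\omega)+g(x,\omega),
\qquad \mu(x,\omega)=\ol a/a(x,\omega).
\]
First I would record that, under Assumption \ref{assump5.1}, for fixed $\omega$ the map $x\mapsto \Phi(x,\omega)$ is Lipschitz on $\bbR$: $a$ and $b$ are uniformly positive, bounded, and Lipschitz in $x$, so $\mu=\ol a/a$ is bounded and Lipschitz (the reciprocal of a function bounded away from $0$ with a Lipschitz numerator), hence the product $b\mu$ is Lipschitz and bounded; adding the Lipschitz, bounded $g$ keeps these properties. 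Consequently $\Phi(\cdot,\omega)$ is globally Lipschitz with a constant $L$ and globally bounded by some $M$, both depending only on the structural constants in Assumption \ref{assump5.1} (and on $q$).

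Next, the existence and uniqueness of a solution $S=S_q^{x_0}(\cdot,\omega)$ with $S(0)=x_0$ follows from the classical ODE theory: global Lipschitz continuity of $\Phi$ gives local existence and uniqueness, and the global bound $|\Phi|\le M$ (equivalently, the linear growth bound $|\Phi(x,\omega)|\le M$) rules out blow-up, so the solution extends to all $t\ge 0$. That the unique solution is simultaneously a subsolution and a supersolution in the sense used in the paper is immediate, since it satisfies the ODE with equality. For the comparison principle: if $S^1$ is a subsolution and $S^2$ a supersolution of \eqref{FB} with $S^1(0)\le S^2(0)$, then standard Gronwall-type reasoning applies --- set $w:=(S^1-S^2)_+$, use that $\Phi(\cdot,\omega)$ is Lipschitz to get $\tfrac{d}{dt}w \le L\,w$ in the appropriate (a.e.\ or viscosity) sense, and conclude $w\equiv 0$, i.e.\ $S^1\le S^2$ on $[0,\infty)$.

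There is no real obstacle here; the only points requiring a line of care are (i) checking that $\mu=\ol a/a$ inherits Lipschitz continuity from $a$ --- which uses the uniform positivity of $a$ --- and (ii) phrasing the comparison argument so that it covers the (sub/super)solution notion the authors intend (Lipschitz solutions of the ODE, or one-sided differential inequalities), for which the Gronwall estimate on $(S^1-S^2)_+$ is the standard device. I would present these two observations explicitly and cite standard Cauchy--Lipschitz theory for the rest.
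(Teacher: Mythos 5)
Your proposal is correct and follows essentially the same route as the paper: the authors prove Lemma \ref{L.2.3} simply by invoking standard Cauchy--Lipschitz theory under Assumption \ref{assump5.1}, and your writeup just makes explicit the routine verifications (Lipschitz continuity and boundedness of $q\,b\,\mu+g$ via the uniform positivity of $a$, global extension, and the Gronwall comparison argument) that the paper leaves implicit.
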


The following lemma describes some basic properties of $S_q^{x_0}$.
\begin{lemma}[Basic properties of \eqref{FB}]\lb{L.2.6}
Let $S_q^{x_0}$ satisfy \eqref{FB} with with initial condition  $S_{q}^{x_0}(0,\omega)=x_0$. We have
\begin{enumerate}
    \item \[
\frac{d}{dt}S_{q}^{x_0}(t)\in[ qc_{\min},qC_{\max}+\|g\|_\infty].
\]
where
\[
c_{\min}:=\inf_{z,\omega} \left\{\ol a {b(z,\omega)}/{a(z,\omega)}\right\}>0\quad\text{and}\quad C_{\max}:=\sup_{z,\omega}\left\{ \ol a {b(z,\omega)}/{a(z,\omega)}\right\}>0.
\]

\item 
For any $x_0,x_1\in\bbR$,  we have
\[
S_{q}^{x_0}(t)\leq S_{q}^{x_1}(t)+(x_0-x_1)_+(qC_{\max}+\|g\|_\infty)/(qc_{\min}).
\]
\end{enumerate}

\end{lemma}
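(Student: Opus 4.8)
\textbf{Plan for the proof of Lemma \ref{L.2.6}.}

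The plan is to read off both assertions directly from the right-hand side of the ODE \eqref{FB}, using only the bounds in Assumption \ref{assump5.1}(i) together with the basic comparison principle for ODEs furnished by Lemma \ref{L.2.3}. For part (1), I would simply plug the definition of $\mu$ from \eqref{3.14} into \eqref{FB} to write
\[
\frac{d}{dt}S_{q}^{x_0}(t)= q\,\ol a\,\frac{b(S_{q}^{x_0}(t,\omega),\omega)}{a(S_{q}^{x_0}(t,\omega),\omega)}+g(S_{q}^{x_0}(t,\omega),\omega).
\]
Since $q>0$, the first term lies in $[qc_{\min},qC_{\max}]$ by the very definition of $c_{\min}$ and $C_{\max}$, and these quantities are strictly positive (resp.\ finite) because $a$, $b$ are uniformly positive and finite by Assumption \ref{assump5.1}(i) and $\ol a>0$ (this was noted below \eqref{2.10}). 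The last term $g(\cdot,\omega)$ is nonnegative and bounded by $\|g\|_\infty<\infty$, again by Assumption \ref{assump5.1}(i). Adding the two bounds gives the claimed interval $[qc_{\min},\,qC_{\max}+\|g\|_\infty]$. There is essentially no obstacle here; it is a one-line consequence of the structure of the equation.

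For part (2), I would first dispose of the trivial case $x_0\le x_1$: then $(x_0-x_1)_+=0$, and since $S_q^{x_0}(0)=x_0\le x_1=S_q^{x_1}(0)$, the comparison principle from Lemma \ref{L.2.3} gives $S_q^{x_0}(t)\le S_q^{x_1}(t)$ for all $t\ge0$, which is the asserted inequality. Now suppose $x_0>x_1$; set $d:=x_0-x_1>0$. The idea is to compare $S_q^{x_0}$ with a time-shifted copy of $S_q^{x_1}$. Let $T:=d/(qc_{\min})$. By part (1), $\frac{d}{dt}S_q^{x_1}(t)\ge qc_{\min}$ for all $t$, so
\[
S_q^{x_1}(T)\ \ge\ x_1+qc_{\min}\,T\ =\ x_1+d\ =\ x_0,
\]
and hence $S_q^{x_1}(T)\ge S_q^{x_0}(0)$. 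Because the coefficients $a$, $b$, $g$ in \eqref{FB} are time-independent, $t\mapsto S_q^{x_1}(t+T,\omega)$ is again a solution of \eqref{FB}, and it starts at time $0$ from $S_q^{x_1}(T)\ge x_0 = S_q^{x_0}(0)$. The comparison principle then yields $S_q^{x_0}(t)\le S_q^{x_1}(t+T)$ for all $t\ge 0$. Finally, using part (1) again to bound the increment of $S_q^{x_1}$ over the interval $[t,t+T]$ from above,
\[
S_q^{x_1}(t+T)\ \le\ S_q^{x_1}(t)+(qC_{\max}+\|g\|_\infty)\,T\ =\ S_q^{x_1}(t)+\frac{(x_0-x_1)(qC_{\max}+\|g\|_\infty)}{qc_{\min}},
\]
which combined with the previous inequality gives exactly the stated bound (with $x_0-x_1$ replaced by $(x_0-x_1)_+$ to cover both cases uniformly).

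\textbf{Main obstacle.} There is no serious difficulty; the only points requiring a word of care are (a) checking that $c_{\min}>0$ and $C_{\max}<\infty$, which follow from the uniform positivity and finiteness of $a$, $b$ and the positivity and finiteness of $\ol a$, and (b) invoking the time-independence of the coefficients to justify that a time-shift of a solution is again a solution, so that the ODE comparison principle in Lemma \ref{L.2.3} applies to the shifted trajectory. Both are routine, so the proof is short.
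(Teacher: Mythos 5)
Your proposal is correct and follows essentially the same route as the paper: part (1) is read off directly from the ODE, and part (2) uses the same time shift $t_0=(x_0-x_1)/(qc_{\min})$ together with the comparison principle and the speed bounds from part (1). The only cosmetic difference is that you spell out the comparison/time-invariance details which the paper leaves implicit.
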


\begin{proof}
The first claim follows directly from the ODE \eqref{FB}.
Let us only prove the second claim.

If $x_0\leq x_1$, the uniqueness of the ODE yields
$S_{q}^{x_0}(t)\leq S_{q}^{x_1}(t)$ for all $t\geq 0$. 
If $x_0>x_1$, from the first part of the claim, $S_q^{x_1}(t_0)\geq x_0$ where 
\[
t_0:=(x_0-x_1)/ (qc_{\min}).
\]
Thus, by comparing $S_{q}^{x_0}(t)$ with $S_{q}^{x_1}(t+t_0)$, we get
\[
S_{q}^{x_0}(t)\leq S_{q}^{x_1}(t+t_0)\leq S_{q}^{x_1}(t)+(qC_{\max}+\|g\|_\infty)t_0
\]
which finishes the proof.
\end{proof}

\subsection{The Effective Velocity}
\label{ss:effective velocity}
This subsection is devoted to defining the effective velocity $\ol V$ and its continuity in $x$. The effective velocity will be identified by the asymptotics of solutions to \eqref{FB}.
Our strategy in this section is to apply the subadditive ergodic theorem to the quantity that we refer to as the ``arrival times''.

For each fixed $\omega$, let $S^{x_0}_q(t,\omega)$ solve the ODE \eqref{FB} with initial condition  $S^{x_0}_q(0,\omega)=x_0$.
Define the arrival time to any $x_1>x_0$ as
\[
T_q(x_1;x_0,\omega):=\inf\{t>x_0\mid  S_q^{x_0}(t,\omega)=x_1\}.
\]
Then $T_q(\cdot;x_0,\omega)$ can be viewed as the inverse function of $S_q^{x_0}(\cdot,\omega)$. We have the following result.

\begin{lemma}\lb{L.4.5}
For each $q>0$, there exists a constant $\ol{T}_q=\ol T_q(a,b)>0$ such that for all $x_0\in\bbR$ and almost all $\omega$ we have
\beq\lb{45}
\lim_{n\to\infty}T_q(x_0+n;x_0,\omega)/n =\ol{T}_q.
\eeq
Moreover, $1/\ol{T}_q\in [ qc_{\min},qC_{\max}+\|g\|_\infty]$ where $c_{\min}$ and $C_{\max}$ are from Lemma \ref{L.2.6}. 
\end{lemma}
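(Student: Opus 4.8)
The plan is to apply Kingman's Subadditive Ergodic Theorem to the family of arrival times. First I would fix $q>0$ and set, for $n\in\bbN$,
\[
g_n(\omega):=T_q(n;0,\omega).
\]
The key structural observation is the \emph{additivity along the trajectory}: because $S_q^{x_0}$ solves an autonomous-in-$t$ ODE whose coefficients depend on position, for any intermediate point the time to traverse $[0,n+m]$ starting from $0$ equals the time to traverse $[0,n]$ plus the time to traverse $[n,n+m]$ starting from $n$. Combined with the translation identity \eqref{2.68}, namely $S_q^{n}(t,\omega)=S_q^{0}(t,\tau_n\omega)$, this yields exactly
\[
g_{n+m}(\omega)=T_q(n;0,\omega)+T_q(n+m;n,\omega)=g_n(\omega)+g_m(\tau_n\omega),
\]
so $\{g_n\}$ is in fact subadditive (with equality). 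The integrability hypotheses of Kingman's theorem are immediate from Lemma \ref{L.2.6}(1): since $\tfrac{d}{dt}S_q^{x_0}\in[qc_{\min},\,qC_{\max}+\|g\|_\infty]$ with $qc_{\min}>0$, we get the deterministic two-sided bound
\[
\frac{1}{qC_{\max}+\|g\|_\infty}\;\le\;\frac{g_n(\omega)}{n}\;\le\;\frac{1}{qc_{\min}},
\]
so each $g_n\in L^1(\Sigma)$ and $\inf_n \tfrac1n\int_\Sigma g_n\,d\bbP$ is finite and bounded below. Kingman's theorem then gives a constant $\ol T_q$ and a full-measure set on which $g_n/n\to\ol T_q$; the same deterministic bounds pass to the limit to give $1/\ol T_q\in[qc_{\min},\,qC_{\max}+\|g\|_\infty]$, and the fact that this constant is deterministic (independent of $\omega$) is part of the conclusion of the theorem.

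Two points remain. First, \eqref{45} is stated for \emph{all} $x_0\in\bbR$ with a single full-measure set, not just $x_0=0$. For fixed $x_0$, the translation identity \eqref{2.68} gives $T_q(x_0+n;x_0,\omega)=T_q(n;0,\tau_{x_0}\omega)=g_n(\tau_{x_0}\omega)$, so the limit is $\ol T_q$ on the translate $\tau_{-x_0}\Sigma_0$, which still has full measure; to upgrade to a single set good for all $x_0$ simultaneously I would use the Lipschitz-in-$x_0$ control from Lemma \ref{L.2.6}(2) (which bounds $S_q^{x_0}$ in terms of $S_q^{x_1}$ up to a shift proportional to $|x_0-x_1|$, translating into a uniform modulus for $T_q(x_0+n;x_0,\omega)/n$ in $x_0$) together with convergence on a countable dense set of $x_0$'s, exactly in the spirit of Lemma \ref{L.2.2}. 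Second, one should note $\ol T_q$ depends only on $(a,b)$ (and $q$, and $g$) but not on $\omega$ or $x_0$, which is built into the above.

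The main obstacle I anticipate is not the subadditivity itself — that is essentially bookkeeping once \eqref{2.68} is in hand — but rather making the convergence hold on a \emph{single} null set uniformly in the starting point $x_0$, since $x_0$ ranges over an uncountable set while Kingman's theorem a priori only handles a fixed sequence. The Lipschitz estimate of Lemma \ref{L.2.6}(2) is the right tool: it reduces the uncountable family to a countable dense one by a standard $\eps$-net argument, and I expect the bulk of the write-up to be devoted to that reduction rather than to the ergodic-theoretic core.
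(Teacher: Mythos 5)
Your proposal is correct and follows essentially the same route as the paper: additivity of the arrival time along the trajectory plus stationarity gives the (exactly additive) subadditive cocycle, Kingman's theorem yields the deterministic limit $\ol T_q$, and the speed bounds of Lemma \ref{L.2.6}(1) give both the integrability hypotheses and the estimate $1/\ol T_q\in[qc_{\min},qC_{\max}+\|g\|_\infty]$. The one step you flag as the main obstacle --- a single null set valid for all $x_0$ --- is lighter than you anticipate: the comparison of Lemma \ref{L.2.6}(2) is deterministic and uniform in $n$ (it bounds $|T_q(x_0+n;x_0,\omega)-T_q(x_1+n;x_1,\omega)|$ by a constant times $|x_0-x_1|$ independently of $n$), so convergence at $x_0=0$ transfers to every $x_0\in\bbR$ on the same full-measure set with the same limit, which is exactly how the paper argues; no dense-set or Egorov/Wiener argument in the spirit of Lemma \ref{L.2.2} is needed, since here $x_0$ is fixed rather than growing with $n$.
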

\begin{proof}
Note that, due to the additivity property of ODEs, we have 
for any $n,m\geq 0$, 
\begin{align*}
T_q(x_0+n+m;x_0,\omega)&=T_q(x_0+n;x_0,\omega)+T_q(x_0+n+m;x_0+n,\omega)\\
&= T_q(x_0+n;x_0,\omega)+T_q(x_0+m;x_0,\tau_n\omega),
\end{align*}
where the second equality follows since $b$, $\mu$, and $g$ are stationary.
Thus, the convergence of $T_q(x_0+n;x_0,\omega)/n$ as $\bbN\ni n\to\infty$ to some constant $\ol T_q(x_0)$ follows from Kingman's Subadditive Ergodic Theorem. Since $T_q(\cdot;x_0,\omega)$ is non-decreasing, for any $n'\in [n,n+1]$ we have
\[
\frac{T_q(x_0+n;x_0,\omega)}n \frac{n}{n+1}\leq \frac{T_q(x_0+n';x_0,\omega)}{n'}\leq \frac{T_q(x_0+n+1;x_0,\omega)}{n+1}\frac{n+1}{n}.
\]
Thus, \eqref{45} holds with $[1,\infty)\ni n\to\infty$.

Next, since 
\[
\frac{d}{dt}S_q^{x_0}(t)\in [ qc_{\min},qC_{\max}+\|g\|_\infty]
\]
by Lemma \ref{L.2.6}, 
we get for any $x_1>x_0$, 
\[
T_q(x_1+n;x_0,\omega)-\frac{x_1-x_0}{qc_{\min}}\leq {T_q(x_1+n;x_1,\omega)}\leq T_q(x_1+n;x_0,\omega).
\]
This implies that $\ol T(x_0)$ is actually independent of $x_0$. By Lemma \ref{L.2.6} again, we have for any $n>0$,
\[
qc_{\min}\leq n/T_q(x_0+n;x_0,\omega)\leq (qC_{\max}+\|g\|_\infty),
\]
which proves the second claim.
\end{proof}

Consider variables of the micro-scale: Let $S_q^{x_0}$ be a solution to \eqref{FB}, and define
\begin{align}\lb{FBeps}
    S_{q,\eps}^{x_0}(t):=\eps (S^{x_0}_q(\eps^{-1}t)-x_0)+x_0.
\end{align}
As a direct corollary of the previous lemma, we have the following result. 

\begin{corollary}[Effective velocity]\lb{P.2.12}
Let $a,b$ and $g$ satisfy (1)--(3) from Section \ref{S.4}. Let $c_{\min}$ and $C_{\max}$ be from Lemma \ref{L.2.6}. Then for any $q>0$, 
\beq\lb{speed}
\ol V_q(a,b):=1/\ol T_q(a,b)\in  [ qc_{\min},qC_{\max}+\|g\|_\infty]
\eeq
independent of $\omega$ such that 
for all $t>0$ and almost all $\omega$,
\[
\lim_{t\to\infty}\left[S_q^{x_0}(t,\omega)-x_0\right]/{t}=\ol V_q.
\]
Equivalently, we have for  almost all $\omega$,
\[
\lim_{\eps\to 0}S_{q,\eps}^{x_0}(t,\omega)=x_0+\ol V_qt.
\]
\end{corollary}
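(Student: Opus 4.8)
The plan is to deduce the corollary directly from Lemma \ref{L.4.5} by inverting the arrival-time convergence. First I would set $\ol V_q := 1/\ol T_q$ and note that the bounds $1/\ol T_q \in [qc_{\min}, qC_{\max} + \|g\|_\infty]$ are exactly the second assertion of Lemma \ref{L.4.5}, and that $\ol T_q$ (hence $\ol V_q$) is independent of $\omega$ and of $x_0$, again by that lemma. Fix $\omega$ in the full-measure set on which \eqref{45} holds. Since $S^{x_0}_q(\cdot,\omega)$ is continuous, strictly increasing (its derivative is bounded below by $qc_{\min}>0$ by Lemma \ref{L.2.6}), and unbounded, it is a bijection from $[0,\infty)$ onto $[x_0,\infty)$ whose inverse is precisely $T_q(\,\cdot\,;x_0,\omega)$; in particular $S^{x_0}_q(T_q(x_1;x_0,\omega),\omega) = x_1$ for all $x_1 > x_0$.

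Next I would upgrade the convergence along integers $x_0 + n$ in \eqref{45} to convergence along all real arguments $x_1 \to \infty$. Given $x_1 > x_0$, write $n = \lfloor x_1 - x_0 \rfloor$, so $x_0 + n \le x_1 \le x_0 + n + 1$; monotonicity of $T_q(\cdot;x_0,\omega)$ gives
\[
\frac{T_q(x_0+n;x_0,\omega)}{n}\cdot\frac{n}{x_1-x_0}\;\le\;\frac{T_q(x_1;x_0,\omega)}{x_1-x_0}\;\le\;\frac{T_q(x_0+n+1;x_0,\omega)}{n+1}\cdot\frac{n+1}{x_1-x_0},
\]
and letting $x_1\to\infty$ (so $n\to\infty$) squeezes the middle term to $\ol T_q$. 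Thus $\lim_{x_1\to\infty} T_q(x_1;x_0,\omega)/(x_1 - x_0) = \ol T_q$. Now substitute $x_1 = S^{x_0}_q(t,\omega)$: as $t\to\infty$ we have $x_1\to\infty$ (since $S^{x_0}_q$ is unbounded), and $T_q(x_1;x_0,\omega) = t$, so
\[
\frac{t}{S^{x_0}_q(t,\omega)-x_0}\;\longrightarrow\;\ol T_q \qquad\text{as }t\to\infty,
\]
which, upon taking reciprocals, yields $[S^{x_0}_q(t,\omega)-x_0]/t \to 1/\ol T_q = \ol V_q$. Finally, the rescaled form follows by definition of $S^{x_0}_{q,\eps}$ in \eqref{FBeps}: $S^{x_0}_{q,\eps}(t,\omega) - x_0 = \eps(S^{x_0}_q(\eps^{-1}t,\omega) - x_0) = t\cdot\frac{S^{x_0}_q(\eps^{-1}t,\omega)-x_0}{\eps^{-1}t} \to t\,\ol V_q$ as $\eps\to 0$, since $\eps^{-1}t\to\infty$.

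The only genuinely delicate point is bookkeeping with the full-measure sets: Lemma \ref{L.4.5} furnishes, for each fixed $q>0$, a single full-measure set (uniform over $x_0 \in \R$) on which \eqref{45} holds, so all of the above runs for every $x_0$ simultaneously on that set — there is no subtlety about intersecting uncountably many null sets. Everything else is elementary real analysis (monotonicity, continuity, inversion), so I do not anticipate a substantive obstacle; the proof is essentially a short unwinding of the definitions plus the interpolation estimate already used inside the proof of Lemma \ref{L.4.5}.
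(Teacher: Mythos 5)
Your proposal is correct and follows essentially the same route as the paper: the bounds and independence of $\omega$ come straight from Lemma \ref{L.4.5}, the limit for $S_q^{x_0}$ is obtained by inverting the arrival time (using the lower bound on $\frac{d}{dt}S_q^{x_0}$ from Lemma \ref{L.2.6} and the real-argument version of \eqref{45}), and the rescaled statement is just the definition of $S_{q,\eps}^{x_0}$. Your explicit squeeze to pass from integer to real arguments merely repeats a step already contained in the proof of Lemma \ref{L.4.5}, so no new ideas are needed.
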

\begin{proof}

Lemma \ref{L.4.5} directly implies that \eqref{speed} holds. 

Next, we assume $x_0=0$ for simplicity.
By Lemma \ref{L.4.5} again, for any $\delta>0$ and almost all $\omega$, there exists $N_\omega$ such that for all $n\geq N_\omega$,
\[
|n/T_q(n;0,\omega)-\ol V_q|\leq\delta.
\]
In view of \eqref{speed}, this implies that for all $t\geq N_\omega/(qc_{\min})$ (then $S_q^0(t,\omega)\geq N_\omega$),
\[
|S_q^0(t,\omega)/t-\ol V_q|\leq\delta.
\]
From this, we  obtain the second claim, and the last claim follows from the definition of $S_{q,\eps}^{x_0}$. 

\end{proof}

We upgrade the previous statement to the following uniform convergence result.

\begin{proposition}[{Uniform convergence of $S_{q ,\eps}^{ x_0}$}]\lb{P.2.13}

For any $\Lambda>0$ and $t_1>t_0>0$, there exists a set $\Sigma_0\subset \Sigma$ of full measure such that for every $\omega\in \Sigma_0$,
\[
\lim_{\eps\to 0} \sup_{ {|y|\leq\Lambda/\eps} \,\&\, t\in [t_0,t_1]}\left|S_{q ,\eps}^{ x_0}(t,\tau_y\omega)-x_0-\ol V_qt\right|=0.
\]
\end{proposition}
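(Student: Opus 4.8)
The plan is to reduce the statement to an already-established uniform ergodic average by exploiting the fact that, in one spatial dimension, the arrival time associated with the ODE \eqref{FB} is an explicit integral. The first step is the rescaling \eqref{FBeps}: writing $M:=\eps^{-1}$ one has the identity
\[
\sup_{|y|\le\Lambda/\eps,\ t\in[t_0,t_1]}\bigl|S_{q,\eps}^{x_0}(t,\tau_y\omega)-x_0-\ol V_qt\bigr|
=\frac1M\,\sup_{|y|\le\Lambda M,\ s\in[Mt_0,Mt_1]}\bigl|S_{q}^{x_0}(s,\tau_y\omega)-x_0-\ol V_qs\bigr|,
\]
so it suffices to show, for $\omega$ in a full-measure set, that $\frac1M\sup_{|y|\le\Lambda M,\,s\in[Mt_0,Mt_1]}|S_q^{x_0}(s,\tau_y\omega)-x_0-\ol V_qs|\to0$ as $M\to\infty$.

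The key observation is to introduce $H(x,\omega):=q\,b(x,\omega)\,\mu(x,\omega)+g(x,\omega)$, with $\mu$ as in \eqref{3.14}. By Lemma \ref{L.2.6}(1), $H$ is bounded above and bounded below away from $0$ uniformly in $x,\omega$, and $H$ is stationary since $a,b,g$ are. Because $S_q^{x_0}(\cdot,\omega)$ is the strictly increasing solution of $\dot S=H(S,\omega)$ with $S(0)=x_0$, its inverse is exactly the arrival time $T_q(x_1;x_0,\omega)=\int_{x_0}^{x_1}H(x,\omega)^{-1}\,dx$, and by stationarity and a change of variables $T_q(x_1;x_0,\tau_y\omega)=\int_{x_0+y}^{x_1+y}H(u,\omega)^{-1}\,du$. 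I would then apply Lemma \ref{L.2.2} with the coefficient $a$ there replaced by $H$ (legitimate, since $1/H$ is uniformly positive, finite, and stationary): for each $R'>0$ there is a full-measure $\Sigma_0$ such that, for all $\omega\in\Sigma_0$,
\[
\Phi(m):=\sup_{|z|\le R'm}\Bigl|\,m^{-1}\!\!\int_{z-m}^{z}H(x,\omega)^{-1}\,dx-\ol T_q\,\Bigr|\ \longrightarrow\ 0\quad\text{as }m\to\infty,
\]
where $\ol T_q=\bbE[H(0,\cdot)^{-1}]=1/\ol V_q$ is the constant from Lemma \ref{L.4.5} and Corollary \ref{P.2.12} (this identification follows from the Birkhoff--Khinchin Theorem and uniqueness of the limit).

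The last step is to invert. Fix $\omega\in\Sigma_0$, $|y|\le\Lambda M$ and $s\in[Mt_0,Mt_1]$, and set $x_1:=S_q^{x_0}(s,\tau_y\omega)$, so that $s=T_q(x_1;x_0,\tau_y\omega)$. Lemma \ref{L.2.6}(1) forces $m:=x_1-x_0\in[c_1M,c_2M]$ with $c_1:=qc_{\min}t_0$ and $c_2:=(qC_{\max}+\|g\|_\infty)t_1$, while $z:=x_1+y$ satisfies $|z|\le|x_0|+c_2M+\Lambda M\le C_3M$ for $M$ large. Choosing $R':=C_3/c_1$ ensures $|z|\le R'm$, so $|s/m-\ol T_q|=|m^{-1}\int_{z-m}^{z}H^{-1}-\ol T_q|\le\Phi(m)\le\delta_M:=\sup_{m'\ge c_1M}\Phi(m')$, and $\delta_M\to0$. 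For $M$ large enough that $\delta_M<\tfrac12\ol T_q$ this gives $|m-\ol V_qs|\le 2\ol T_q^{-2}\delta_M\,s$, i.e.\ $|S_q^{x_0}(s,\tau_y\omega)-x_0-\ol V_qs|\le 2\ol T_q^{-2}t_1\,\delta_M\,M$ uniformly in $y,s$; dividing by $M$ and using the rescaling identity finishes the proof.

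The \emph{main obstacle} is not a new idea but bookkeeping: one must check that the pairs $(z,m)$ that arise as $y$ and $s$ vary all lie in $\{|z|\le R'm\}$ for a single fixed $R'$, and that $m\to\infty$. This is exactly where the hypotheses $t_0>0$ (which forces $m\gtrsim M$) and $\Lambda<\infty$ (which keeps $|z|\lesssim m$) are used. The genuinely ergodic-theoretic difficulty — promoting pointwise convergence of ergodic averages to convergence that is uniform over translates $\tau_z\omega$ — has already been absorbed into Lemma \ref{L.2.2} (proved via Egorov's theorem and Wiener's ergodic theorem), so beyond recognizing the one-dimensional arrival time as an ergodic average of $1/H$, only the elementary inversion using the uniform lower speed bound remains.
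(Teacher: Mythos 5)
Your proposal is correct, and it takes a genuinely different route from the paper. The paper first reduces the supremum over $t\in[t_0,t_1]$ to the single time $t=1$ via the scaling identity $S^{x_0}_{q,\eps}(t,\omega)=t\bigl(S^{x_0}_{q,\eps/t}(1,\omega)-x_0\bigr)+x_0$, and then re-runs the Egorov--Wiener argument of Lemma \ref{L.2.2} directly on the random variables $S^{x_0}_{q,\eps}(1,\cdot)$, using Lemma \ref{L.2.6}(2) to transfer the estimate from a nearby good translate $\tau_z\omega$ to the given $\tau_y\omega$. You instead exploit the one-dimensional structure more fully: since $\dot S=H(S,\omega)$ with $H(x,\omega)=q\,b(x,\omega)\mu(x,\omega)+g(x,\omega)$ uniformly positive, bounded, Lipschitz and stationary, the arrival time is the exactly additive functional $T_q(x_1;x_0,\omega)=\int_{x_0}^{x_1}H(x,\omega)^{-1}dx$, so $\ol T_q=\bbE[1/H(0,\cdot)]$ by Birkhoff (no Kingman needed), and the required uniformity over translates $|y|\leq\Lambda/\eps$ \emph{and} over $t\in[t_0,t_1]$ comes in one shot from quoting Lemma \ref{L.2.2} with $a$ replaced by $H$, followed by the elementary inversion $|s/m-\ol T_q|\leq\Phi(m)\Rightarrow|m-\ol V_q s|\lesssim\delta_M s$, where $t_0>0$ guarantees $m\geq qc_{\min}t_0 M\to\infty$ and $\Lambda<\infty$ keeps $|z|=|x_1+y|\leq R'm$ for a fixed $R'$. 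What your approach buys is economy (Lemma \ref{L.2.2} is reused verbatim rather than its proof being repeated, and the time-rescaling reduction is unnecessary) and a cleaner identification of $\ol T_q$ as a plain ergodic average; what the paper's approach buys is robustness, since it uses only the subadditive/Lipschitz structure of the arrival times (Lemma \ref{L.2.6}(2)) and not the explicit solvability of the ODE, which is the mechanism that generalizes when no closed-form inverse is available. Do note, as you implicitly did, that your application of Lemma \ref{L.2.2} requires $q>0$ so that $H$ is uniformly positive even when $g\equiv 0$; this matches the setting of Lemma \ref{L.4.5} and Corollary \ref{P.2.12}.
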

\begin{proof}
By the definition of $S_{q,\eps}^{x_0}$, 
\[
S_{q,\eps}^{x_0}(t,\omega)=\eps \left(S^{x_0}_{q }(\eps^{-1}t,\omega)-x_0\right)+x_0=t\left((S_{q,\eps /t}^{x_0}(1,\omega)-x_0\right)+x_0,
\]
so we have, for any $\ep>0$ and $t\in [t_0,t_1]$,
\begin{align*}
    \sup_{ |y|\leq \Lambda/\eps }\left|S_{q ,\eps}^{ x_0}(t,\tau_y\omega)-x_0-\ol V_qt\right|
    &= \sup_{ |y|\leq \Lambda/\eps }
    t\left|S_{q ,\eps/t}^{ x_0}(1,\tau_y\omega)-x_0-\ol V_q\right|\\
    &\leq  \sup_{ |y|\leq \Lambda/(\eps' t_0)}
    t_1\left|S_{q ,\eps'}^{ x_0}(1,\tau_y\omega)-x_0-\ol V_q\right|,
\end{align*}
where $\ep'=\ep/t$; note $\ep'\in [\ep/t_1, \eps/t_0]$. 
Thus, to prove the proposition, it suffices to show
\beq\lb{2.7}
\lim_{\eps\to 0} \sup_{|y|\leq\Lambda/\eps}\left|S_{q ,\eps}^{ x_0}(1,\tau_y\omega)-x_0-\ol V_q\right|=0\quad a.e.\,\,\omega.
\eeq
By Corollary \ref{P.2.12}, \eqref{2.7} holds when $\Lambda=0$. The rest of the proof is similar to the one of Lemma \ref{L.2.2}, except that we will further use Lemma \ref{L.2.6} (2).

By Egorov's Theorem, for any $\delta>0$, there exists $\eps_\delta>0$ and $D_\delta\subseteq\Sigma$ such that for all $\omega\in D_\delta $ and $\eps\in (0,\eps_\delta)$,
\[
\bbP[D_\delta ]\geq 1-\delta \quad\text{and}\quad \left|S_{q ,\eps}^{ x_0}(1,\omega)-x_0-\ol V_q\right|\leq \delta.
\]
As before, we apply Wiener's Ergodic Theorem to get $\Sigma_\delta\subseteq\Sigma$ with $\bbP[\Sigma_\delta]=1$ such that for all $\omega\in\Sigma_\delta$,
\[
\lim_{r\to\infty}\frac{1}{|\calB_r|}\int_{\calB_r}\chi_{D_\delta }(\tau_y\omega)dy=\bbP[D_\delta ]\geq 1-\delta.
\]
Therefore, for each $\omega\in\Sigma_\delta$ and $\Lambda>0$, there exists $\eps_{\delta,\omega}$ such that if $\eps\in (0,\eps_{\delta,\omega})$,
\beq\lb{2.69}
\left|\{z\in \calB_{2\Lambda/\eps}\mid \tau_z\omega\in D_\delta \}\right|\geq (1-2\delta)4\Lambda/\eps.
\eeq

Let us fix $R>0$, $\omega\in \Sigma_\delta$ and $\eps\in (0,\eps_{\delta,\omega})$, and take an arbitrary $y\in \calB_{\Lambda/\eps}$. It follows from \eqref{2.69} that there exists $z\in \calB_{2\Lambda/\eps}$ such that 
\[
|y-z|\leq 4\delta \Lambda/\eps
\]and $\tau_z\omega\in D_\delta $. 
By Lemma \ref{L.2.6} (2), we get
\[
\left|S_{q}^{x_0+y}(1,\omega)- S_{q}^{x_0+z}(1,\omega)\right|\leq |y-z|(qC_{\max}+\|g\|_\infty)/(qc_{\min})\leq \frac{4\delta\Lambda}{\ep}(qC_{\max}+\|g\|_\infty)/(qc_{\min}).
\]
By \eqref{2.68},  $S_{q}^{x_0}(1,\tau_y\omega)=S_{q}^{x_0+y}(1,\omega)$.
Thus, also using that $\tau_z\omega\in D_\delta $, we obtain
\begin{align*}
 \left|S_{q,\eps}^{x_0}(1,\tau_y\omega)-x_0-\ol V_q\right|
&\leq \left|S_{q,\eps}^{x_0}(1,\tau_y\omega)-S_{q,\eps}^{x_0}(1,\tau_z\omega)\right|+\left| S_{q,\eps}^{x_0}(1,\tau_z\omega)-x_0-\ol V_q\right|\\
&\leq \eps\left|S_{q}^{x_0+y}(\eps^{-1},\omega)-S_{q}^{x_0+z}(\eps^{-1},\omega)\right|+\delta\\
&\leq (4\Lambda(qC_{\max}+\|g\|_\infty)/(qc_{\min})+1)\delta,  
\end{align*}
for all $\omega\in\Sigma_\delta$ and $\eps\in (0,\eps_{\delta,\omega})$.
Since $y\in \calB_{\Lambda/\eps}$ and $\delta>0$ are arbitrary, this proves \eqref{2.7}.
\end{proof}

\subsection{Continuity of the effective velocity}
\label{ss:continuity V}
To end the section, let us prove that the effective velocity $\ol V$ satisfies Assumption \ref{assump2} \ref{item:assump1.1 2}. It suffices to obtain the following result.

\begin{lemma}[Continuity of the effective velocity]\lb{L.4.8}
Let $q_i\in\R$,  and let $a_i$, $b_i$ and $g_i$, for $i=1,2$, satisfy  Assumption \ref{assump5.1}. Further assume that either $g_i$ are strictly positive or $g_1=g_2\equiv 0$. Then  there exits $C$ such that for $\ol V_{q_j}^{(i)}$ given by Corollary \ref{P.2.12} with data $a_i,b_i,g_i$ and $q_j$, if
\[
\|a_1-a_2\|_\infty,\,\|b_1-b_2\|_\infty,\,\|g_1-g_2\|_\infty\leq \delta,\quad |q_1-q_2|\leq \gamma
\]
for some $\delta,\gamma\in (0,1)$, then we have
\begin{equation}
    \label{eq:V continuity}
|\ol V_{q_1}^{(1)}-\ol V_{q_2}^{(2)}|\leq C\delta(1+\min\{|q_1|,|q_2|\})+C\gamma.
\end{equation}
Furthermore, 
there exists $C>0$ such that if $C\delta\leq \gamma$ then, we have
\[
(1+\gamma)^2\ol V_{q_1}^{(1)}\geq \ol V_{(1+\gamma)q_1}^{(2)}.
\]
\end{lemma}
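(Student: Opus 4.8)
The plan is to first derive a closed-form expression for $\ol V_q$ and then reduce both assertions to elementary estimates on it. Given data $a,b,g$ as in Assumption~\ref{assump5.1}, set $\mu:=\ol a/a$ and $f^q(x,\omega):=q\,b(x,\omega)\mu(x,\omega)+g(x,\omega)$, so that \eqref{FB} is the autonomous ODE $\tfrac{d}{dt}S_q^{x_0}=f^q(S_q^{x_0},\omega)$. For $q>0$ one has $\tfrac{d}{dt}S_q^{x_0}\ge q c_{\min}>0$, so $S_q^{x_0}(\cdot,\omega)$ increases from $x_0$ to $+\infty$ and the substitution $x=S_q^{x_0}(t,\omega)$ gives $T_q(x_0+n;x_0,\omega)=\int_{x_0}^{x_0+n}(f^q(x,\omega))^{-1}\,dx$. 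Since $b,\mu,g$ are stationary and bounded, $1/f^q$ is stationary and bounded, so Birkhoff--Khinchin together with Lemma~\ref{L.4.5} yields
\[
\ol V_q(a,b)=(\ol T_q)^{-1}=\big(\bbE[(f^q(0,\cdot))^{-1}]\big)^{-1}.
\]
I take $q_i>0$ throughout (the formulas make $q_i=0$ immediate); with this closed form the monotonicity of $\ol V_q$ in $q$ and the bound $\ol V_q\in[qc_{\min},qC_{\max}+\|g\|_\infty]$ are transparent.

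Next I would record routine Lipschitz bounds: since $a_i$ is bounded away from $0$ and $\infty$, $\|a_1-a_2\|_\infty\le\delta$ forces $|\ol a_1-\ol a_2|\le C\delta$, hence $\|\mu_1-\mu_2\|_\infty\le C\delta$ and (with $\|b_1-b_2\|_\infty\le\delta$) $\|b_1\mu_1-b_2\mu_2\|_\infty\le C\delta$, so writing $f_i^q:=q\,b_i\mu_i+g_i$ one has $\|f_1^q-f_2^q\|_\infty\le C\delta\,q+\delta$. The hypothesis on $g$ leaves two regimes. If $g_1=g_2\equiv0$, then $f_i^q=q\,b_i\mu_i$ and $\ol V_q^{(i)}=q\kappa_i$ with $\kappa_i:=(\bbE[(b_i\mu_i)(0,\cdot)^{-1}])^{-1}\in[c_{\min},C_{\max}]$ and $|\kappa_1-\kappa_2|\le C\delta$. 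If instead $g_i\ge\theta>0$, then $c_0(1+q)\le f_i^q\le C_1(1+q)$ pointwise with $c_0:=\min\{\theta,c_{\min}\}$ and $C_1:=\max\{C_{\max},\|g_i\|_\infty\}$, and the same two-sided bound then holds for $\ol V_q^{(i)}$.

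For \eqref{eq:V continuity} I would split $\ol V_{q_1}^{(1)}-\ol V_{q_2}^{(2)}=(\ol V_{q_1}^{(1)}-\ol V_{q_1}^{(2)})+(\ol V_{q_1}^{(2)}-\ol V_{q_2}^{(2)})$. For the first difference (coefficient change, slope frozen at $q_1$): if $g\equiv0$ it equals $q_1(\kappa_1-\kappa_2)$, hence is $\le C\delta q_1$; if $g_i\ge\theta$, the identity
\[
\ol V_{q_1}^{(1)}-\ol V_{q_1}^{(2)}=\frac{\bbE\big[(f_1^{q_1}-f_2^{q_1})(f_1^{q_1}f_2^{q_1})^{-1}\big]}{\bbE[(f_1^{q_1})^{-1}]\,\bbE[(f_2^{q_1})^{-1}]}
\]
together with $c_0(1+q_1)\le f_i^{q_1}\le C_1(1+q_1)$ gives $|\ol V_{q_1}^{(1)}-\ol V_{q_1}^{(2)}|\le C\delta(1+q_1)$. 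For the second difference (coefficients frozen, slope changed): $\ol V_q^{(2)}=q\kappa_2$ is linear when $g\equiv0$, and when $g_2\ge\theta$ the derivative $\tfrac{d}{dq}\ol V_q^{(2)}=\bbE[b_2\mu_2(f_2^q)^{-2}]\big(\bbE[(f_2^q)^{-1}]\big)^{-2}$ is, thanks to $c_0(1+q)\le f_2^q\le C_1(1+q)$, bounded uniformly in $q$; either way $|\ol V_{q_1}^{(2)}-\ol V_{q_2}^{(2)}|\le C\gamma$. This gives $|\ol V_{q_1}^{(1)}-\ol V_{q_2}^{(2)}|\le C\delta(1+q_1)+C\gamma$, and interchanging $q_1$ and $q_2$ gives the same with $q_2$, which is \eqref{eq:V continuity}.

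For the last claim, since $\gamma>0$ and $g_2\ge0$ the pointwise inequality $((1+\gamma)q b_2\mu_2+g_2)^{-1}\ge(1+\gamma)^{-1}(q b_2\mu_2+g_2)^{-1}$ holds; taking expectations and inverting yields $\ol V_{(1+\gamma)q}^{(2)}\le(1+\gamma)\ol V_q^{(2)}$, so it suffices to prove $\ol V_{q_1}^{(2)}\le(1+\gamma)\ol V_{q_1}^{(1)}$. By the first difference above, $\ol V_{q_1}^{(2)}-\ol V_{q_1}^{(1)}\le C\delta q_1$ (if $g\equiv0$) or $\le C\delta(1+q_1)$ (if $g_i\ge\theta$); and $\ol V_{q_1}^{(1)}\ge c_{\min}q_1$ in the first case, $\ol V_{q_1}^{(1)}\ge c_0(1+q_1)$ in the second (since $\ol V^{(1)}$ is increasing in $q$ with $\ol V_0^{(1)}=(\bbE[g_1(0,\cdot)^{-1}])^{-1}\ge\theta$ and $\ol V_{q_1}^{(1)}\ge q_1c_{\min}$). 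In both cases the error is at most $C\delta\,\ol V_{q_1}^{(1)}$, hence at most $\gamma\,\ol V_{q_1}^{(1)}$ once $C\delta\le\gamma$ (enlarging $C$); combining, $\ol V_{(1+\gamma)q_1}^{(2)}\le(1+\gamma)\ol V_{q_1}^{(2)}\le(1+\gamma)^2\ol V_{q_1}^{(1)}$. The only delicate point — and the reason the hypothesis on $g$ is stated as it is — is uniformity of constants as $q\to0^+$: there $1/\ol V_q^{(i)}\to\bbE[g_i(0,\cdot)^{-1}]$, which stays bounded (so $\ol V_q^{(i)}$ stays bounded below) precisely when $g_i$ is bounded below, while if $g\equiv0$ the exact linearity $\ol V_q^{(i)}=q\kappa_i$ serves instead; a nonnegative $g$ that is neither $\equiv0$ nor uniformly positive is exactly the excluded intermediate case. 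Keeping $\min\{|q_1|,|q_2|\}$ rather than $\max$ in \eqref{eq:V continuity} is the harmless symmetrization at the end of the third step, and everything else is bookkeeping with expectations once the closed form is established.
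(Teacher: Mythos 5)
Your proposal is correct, but it reaches the conclusion by a genuinely different route than the paper. The paper never computes $\ol V_q$ explicitly: it argues at the level of the ODE \eqref{FB}, showing pointwise inequalities such as $q f_2(S)+g_2(S)\leq(1+C\delta)\left(q f_1(S)+g_1(S)\right)$, so that the time-rescaled path $\tilde S(t)=S_q^{(1)}((1+C\delta)t)$ is a supersolution; the ODE comparison principle gives $S_q^{(2)}(t)\leq S_q^{(1)}((1+C\delta)t)$, and dividing by $t$ and invoking Corollary \ref{P.2.12} yields $\ol V_q^{(2)}\leq(1+C\delta)\ol V_q^{(1)}$, with the same case split ($g\equiv 0$ versus $g$ uniformly positive, and $q_1\leq 1$ versus $q_1\geq 1$) and the same lower bounds $\ol V^{(1)}_{q_1}\geq cq_1$ or $\geq c(1+q_1)$ that you use at the end; the scaling step \eqref{eq11} is likewise done by comparison of solutions. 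You instead exploit the one-dimensional autonomous structure fully: since $\frac{d}{dt}S=f^q(S)>0$ with $f^q=qb\mu+g\geq qc_{\min}$, separation of variables gives $T_q(x_0+n;x_0,\omega)=\int_{x_0}^{x_0+n}(f^q(x,\omega))^{-1}dx$, and Birkhoff--Khinchin (consistently with Lemma \ref{L.4.5}) identifies $\ol V_q=\left(\bbE[(qb\mu+g)^{-1}(0,\cdot)]\right)^{-1}$; after that, everything reduces to elementary estimates on expectations — your two-sided bound $c_0(1+q)\leq f^q\leq C_1(1+q)$ when $g\geq\theta$, the resolvent-type identity for the coefficient perturbation, the bounded $q$-derivative for the slope perturbation, and the pointwise inequality $f^{(1+\gamma)q}\leq(1+\gamma)f^q$ replacing \eqref{eq11} — and these computations are all valid (differentiation under the expectation is justified by the uniform bound $b\mu/(f^q)^2\leq C_{\max}/\theta^2$). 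What the closed form buys is transparency: an explicit formula for $\ol V$, from which monotonicity in $q$, the bound \eqref{speed}, and the precise role of the hypothesis on $g$ (uniform positivity or exact vanishing, with the intermediate case visibly failing) are immediate. What the paper's soft comparison argument buys is robustness, since it needs no explicit representation and would survive in situations where none is available. One point common to both treatments: the statement allows $q_i\in\R$, and both you and the paper reduce to nonnegative slopes without spelling out the reflected case $\ol V(x_0,-q)$ from \eqref{3.11}, so your write-up is at the same level of completeness as the paper's there.
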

\begin{proof}
We begin by establishing \eqref{eq:V continuity}. 
By the triangle inequality, it suffices to prove the result for $q_1,q_2\geq 0$. 
For $i=1,2$, let $\ol a_i$ be defined as in \eqref{2.10} with $a_i$ in place of $a$, and then
\[
\|\ol a_1-\ol a_2\|_\infty\leq \delta.
\]
In view of the effective equation \eqref{FB} with $x_0=0$ and a fixed $\omega$, consider
\[
\frac{d}{dt}S_{q_i}^{(i)}(t, \omega)= \frac{q_i\ol a_i b_i(S_{q_i}^{(i)}(t,\omega),\omega)}{a_i(S_{q_i}^{(i)}(t,\omega),\omega)}+g_i(S_{q_i}^{(i)}(t,\omega),\omega)
\]
with $S_{q_i}^{(i)}(0,\omega)=0$. Below, we drop $\omega$ from the notations, and denote $f_i(S):=\frac{\ol a_i b_i(S)}{a_i(S)}$. 
Since $a_i,b_i$ are strictly positive and finite, and $g_1$ and $g_2$ are either strictly positive and finite or $g_1=g_2\equiv 0$,  there exists $C>0$ such that for any $S\in\bbR$,
\[
f_2(S)\leq (1+C\delta)f_1(S)\quad\text{and}\quad g_2(S)\leq (1+C\delta)g_1(S).
\]

We will prove that \eqref{eq:V continuity} in two cases: (1) under the additional assumption  $\gamma =0$, and (2) under the additional assumption $\delta=0$. By  the triangle inequality, establishing \eqref{eq:V continuity} in two cases implies that \eqref{eq:V continuity} holds in general.

We begin with case (1): let us assume $\gamma=0$ and then $q:=q_1=q_2$. 
For any $S\in\bbR$, we have
\[
\begin{aligned}
qf_2(S)+g_2(S)&\leq (1+C\delta)(qf_1(S)+g_1(S)).    
\end{aligned}
\]
Set $\tilde S(t):=S_{q}^{(1)}((1+C\delta)t)$ which satisfies
\[
\frac{d}{dt}\tilde S(t) =(1+C\delta)\left[qf_1(\tilde S(t))+g_1(\tilde S(t))\right]\geq qf_2(\tilde S(t))+g_2(\tilde S(t))
\]
which, by the comparison principle, implies that
\[
S_{q}^{(2)}(t)\leq \tilde S(t)=S_{q}^{(1)}((1+C\delta)t).
\]
Thus, Corollary \ref{P.2.12} yields
\[
\ol V_{q}^{(2)}= \lim_{t\to\infty}{S_{q}^{(2)}(t)}/{t}\leq \lim_{t\to\infty}{S_{q}^{(1)}((1+C\delta)t)}/{t}=(1+C\delta)\ol V_{q}^{(1)}.
\]
Because $\ol V_{q}^{(1)}\leq C(1+q)$, we obtain
\[
\ol V_{q}^{(2)}\leq \ol V_{q}^{(1)}+C\delta(1+q).
\]
Reversing the roles of $V^{(1)}_q$ and $V^{(2)}_q$, we therefore obtain that \eqref{eq:V continuity} holds in the case $\gamma =0$. 
In particular, if $g_1\equiv 0$, we have $\ol V_{q}^{(1)}\leq Cq $ and so
\beq\lb{5.19}
\ol V_{q}^{(2)}\leq \ol V_{q}^{(1)}+C\delta q.
\eeq

We proceed with case (2): let us assume $\delta=0$, so that $S_{q_1}^{(1)}=S_{q_1}^{(2)}$, $S_{q_2}^{(1)}=S_{q_2}^{(2)}$,  and $\gamma>0$. We denote $f:=f_1=f_2,g:=g_1=g_2$. 
We break up this case into two subcases: (2a) $g\equiv 0$ and (2b) $g$ strictly positive.  As in case (1), it will suffice to show $\ol V_{q_2}^{(1)}\leq \ol V_{q_1}^{(1)}+C\gamma$, as the reverse inequality follows by reversing the roles of $V_{q_2}^{(1)}$ and $V_{q_1}^{(1)}$.

In case (2a), i.e. if $\delta=0$ and $g\equiv 0$, the ODE satisfied by $S_{q_i}^{(1)}$ becomes,
\[
\frac{d}{dt}S_{q_i}^{(1)}(t, \omega)= q_if \left(S_{q_i}^{(1)}\right).
\]
Uniqueness of solutions to this ODE implies
$
S_{q_i}^{(1)}(t)= S_{1}^{(1)}(q_it)$,
and therefore,  by Lemma \ref{L.2.6}, 
\[
|S_{q_1}^{(1)}(t)-S_{q_2}^{(1)}(t)|=|S_{1}^{(1)}(q_1 t)-S_{1}^{(1)}(q_2 t)|\leq (C_{max}+\|g\|_\infty)|q_1-q_2|t.
\]
This, together with Corollary \ref{P.2.12}, implies that, in this special case,
\[
|\ol V_{q_1}^{(1)}- \ol V_{q_2}^{(1)}|\leq C\gamma.
\]

Let us now take on case (2b): suppose $\delta=0$, $\gamma>0$, and $g$ is strictly positive. The assumptions on $a$, $b$, and $g$ imply, any $S\in\bbR$, 
\[
\begin{aligned}
q_2f(S)+g(S)&\leq q_1f(S)+g(S)+C\gamma.    
\end{aligned}
\]
Let us now further assume $q_1\leq 1$. We use that $g$ is strictly positive to get 
\[
q_2f(S)+g(S)\leq (1+C\gamma)(q_1f(S)+g(S)).
\]
Then, similarly as before,
$\tilde S(t):=S_{q_1}^{(1)}((1+C\gamma)t)$ satisfies
\[
\frac{d}{dt}\tilde S(t) =(1+C\gamma)\left[q_1f(\tilde S(t))+g(\tilde S(t))\right]\geq q_2f(\tilde S(t))+g(\tilde S(t))
\]
which, by the comparison principle, implies that
\[
S_{q_2}^{(1)}(t)\leq \tilde S(t)=S_{q_1}^{(1)}((1+C\gamma)t).
\]
Thus, $\ol V_{q_2}^{(1)}\leq (1+C\gamma)\ol V_{q_1}^{(1)}$. Since $q_1\leq 1$, $\ol V_{q_1}^{(1)}\leq C$ and so
\[
\ol V_{q_2}^{(1)}\leq \ol V_{q_1}^{(1)}+C\gamma
\]
holds, as desired. 
To complete the proof of case (2b), let us now suppose $q_1\geq1$. Since $f$ is strictly positive, we use
\[
q_2f(S)+g(S)\leq (1+C\gamma/q_1)(q_1f(S)+g(S))
\]
to show that $S_{q_2}^{(1)}(t)\leq \tilde S(t)$
where $\tilde S(t):=S_{q_1}^{(1)}((1+C\gamma/q_1)t)$. Since $\ol V_{q_1}^{(1)}\leq C(1+q_1)$,  
this implies that
\begin{equation}
    \label{eq:contV}
\ol V_{q_2}^{(1)}\leq (1+C\gamma/q_1)\ol V_{q_1}^{(1)}\leq \ol V_{q_1}^{(1)}+C\gamma.
\end{equation}
This completes the proof of case (2b), and therefore of \eqref{eq:V continuity}.

Now we show second claim, and we assume $q_1\geq 0$ again. 
Since $g_1\geq 0$, we have
\[
(1+\gamma)\left[q_1f_1(S)+g_1(S)\right]\geq (1+\gamma)q_1f_1(S)+g_1(S). 
\]
By the ODEs and the  comparison principle, 
$
S_{q_1}^{(1)}((1+\gamma) t)\geq S_{(1+\gamma)q_1}^{(1)}(t)$.
This yields 
\beq\lb{eq11}
(1+\gamma)\ol V_{q_1}^{(1)}\geq \ol V_{(1+\gamma)q_1}^{(1)}.
\eeq

If $g_1$ and $g_2$ are strictly positive, due to the first claim and $\gamma\in (0,1)$,  we have
\[
|\ol V_{(1+\gamma)q_1}^{(1)}-\ol V_{(1+\gamma)q_1}^{(2)}|\leq C\delta(1+q_1).
\]
By \eqref{eq11}, we get
\[
(1+\gamma)\ol V_{q_1}^{(1)}\geq \ol V_{(1+\gamma)q_1}^{(1)}\geq \ol V_{(1+\gamma)q_1}^{(2)}-C\delta(1+q_1).
\]
Hence,  to prove the second claim, it suffices to show
\beq\lb{777}
\gamma\ol V_{q_1}^{(1)}\geq C\delta(1+q_1).
\eeq
Note that the comparison principle of ODEs implies for some $c>0$, we have
$\ol V_{q_1}^{(1)}\geq c(1+q_1)$.
Thus, \eqref{777} holds whenever $c\gamma\geq C\delta$.

If $g_1=g_2\equiv 0$, then by \eqref{5.19} and $\gamma\in(0,1)$,
\[
|V_{(1+\gamma)q_1}^{(1)}-V_{(1+\gamma)q_1}^{(2)}|\leq C\delta q_1.
\]
The claim holds the same as before, because $\ol V_{q_1}^{(1)}\geq c q_1$ in this case.
\end{proof}

\section{The Homogenization Result}\lb{S.5}

 This section is devoted to the proof of our main result, Theorem \ref{thm:main}. The main part of the proof is contained in Proposition \ref{P.3.2}. The proof is quite long and divided into several lemmas. In the first subsection, we state this proposition, state the two lemmas that we need for its proof, and use them to prove the proposition. The second subsection is devoted to the proofs of the lemmas. Finally, we prove Theorem \ref{thm:main} in Section \ref{ss:proof of main result}.

\subsection{Main proposition}
\label{ss:main prop}
Let $p_\eps$ be the solution to \eqref{1.1} with $\eps\in (0,1)$, and let $\Omega_\eps:=\Omega_{p_\eps}$ be the positivity set of $p_\eps$. Then we take half-relaxed limit of the sets $\Omega_\eps$:
\[
\Omega^*(t,\omega):=\limsup_{\eps\to 0,s\to t}\Omega_\eps(s)=\bigcap_{c_1,c_2\to 0}\bigcup_{|s-t|<c_1,\eps<c_2}\Omega_\eps(s),
\]
and
\[
\Omega_*(t,\omega):=\liminf_{\eps\to 0,s\to t}\Omega_\eps(s)=\bigcup_{c_1,c_2\to 0}\bigcap_{|s-t|<c_1,\eps<c_2}\Omega_\eps(s).
\]

The goal is to show that $\Omega^*$ and $\Omega_*$ are, respectively,  a viscosity subflow and superflow of \eqref{1.3}. For this purpose, we denote by $p^*$ the associated function of $\Omega^*$ with respect to the elliptic equation in \eqref{1.3}, meaning that $p^*(\cdot,t)$ is the solution to  
\beq\lb{998}
-\partial_x(\overline A(x) \partial_x p)=\overline F(x)
\eeq
in the domain  $\Omega^*(t)$ with $0$ boundary data.
Similarly, we use $p_*$ to denote the associated function of $\Omega_*$ with respect to \eqref{1.3}.
By Lemma \ref{L.def}, if $\Omega_*$ (resp. $\Omega^*$) is a viscosity superflow (resp. subflow) to \eqref{1.3}, then $p_*$ (resp. $p^*$) is a viscosity supersolution (resp. subsolution).

\begin{proposition}\lb{P.3.2}
$\Omega^* $ is a viscosity subflow and $\Omega_* $ is a viscosity superflow of \eqref{1.3} with $\ol V$ defined in \eqref{3.11}, and with $\ol A,\ol F$ from Lemma \ref{L.9.2}.
\end{proposition}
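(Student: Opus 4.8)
The plan is to verify directly that $\Omega^*$ satisfies Definition \ref{D.1.2} (viscosity subflow) and $\Omega_*$ satisfies Definition \ref{D.1.3} (viscosity superflow); by symmetry I will only describe the argument for $\Omega_*$. First I would record the basic structural facts: each $p_\eps$ is a viscosity solution of \eqref{1.1} (by Lemma \ref{lem:assumption ep}), the sets $\Omega_\eps$ are admissible and non-decreasing in $t$, and $\Omega_*$ is lower semicontinuous by construction, so that $p_*$ (the associated function of $\Omega_*$ with respect to the effective elliptic operator $-\partial_x(\ol A\partial_x\cdot)=\ol F$) is lower semicontinuous. A preliminary point that needs care is that the \emph{interior} part of the superflow condition follows from interior homogenization: on any time slice where $\Omega_\eps(t)\to\Omega_*(t)$ in a controlled way, Lemma \ref{L.9.2} gives that $\liminf_{\eps\to0}p_\eps(\cdot,t,\omega)$ is a supersolution of \eqref{998}, hence dominates $p_*(\cdot,t)$; this is what lets me pass from information about $p_\eps$ near a free boundary point to information about $p_*$.

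The heart of the matter is the free boundary condition. Suppose $\varphi\in C^{2,1}_{x,t}$ touches $p_*$ from below at a right-hand free boundary point $(x_0,t_0)\in\partial\Omega_*$, locally in $\R\times\{t\le t_0\}$, with $q_0:=-\varphi_x(x_0,t_0)>0$ and $r_0:=\varphi_t(x_0,t_0)/q_0$; assume for contradiction $r_0<\ol V(x_0,-q_0)$. The strategy, following the heuristic in Section \ref{sss:intro hom}, is: (i) since $\varphi\le p_*$, a comparison argument at $\eps>0$ small shows the slope of $p_\eps$ at its right free boundary near $x_0$ is, in the limit, at least $q_0$, so after interior homogenization $q_0\le|(p_*)_x(x_0,t_0)|$; (ii) the free boundary of $\varphi$ moves no slower than that of $p_*$, giving $r_0\ge(p_*)_t(x_0,t_0)/|(p_*)_x(x_0,t_0)|$; (iii) one builds barrier solutions of \eqref{1.1} built out of the rescaled effective free boundary trajectories $S^{x_0}_{q,\eps}$ from \eqref{FBeps}, uses the uniform convergence $S^{x_0}_{q,\eps}(t,\tau_y\omega)\to x_0+\ol V_q t$ of Proposition \ref{P.2.13} together with the interior-homogenization estimate Lemma \ref{L.2.2} (this is exactly where the combined interior/free-boundary homogenization enters), and concludes that the effective velocity of the free boundary of $p_*$ at $(x_0,t_0)$ equals $\ol V(x_0,-(p_*)_x(x_0,t_0))$; (iv) combining, $\ol V(x_0,-q_0)>r_0\ge\ol V(x_0,-(p_*)_x(x_0,t_0))$, which contradicts monotonicity of $\ol V(x_0,\cdot)$ (Lemma \ref{L.4.8}) together with $q_0\le|(p_*)_x(x_0,t_0)|$. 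The case $q_0=0$ is automatically allowed by the definition, so nothing is needed there. The subflow statement for $\Omega^*$ is proved by the mirror-image argument (touching from above, reversed inequalities, using $\limsup$ in place of $\liminf$, and the subsolution half of Lemma \ref{L.9.2}).

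The main obstacle, and where most of the work in the subsequent lemmas goes, is step (iii): constructing the comparison barriers at scale $\eps$ that simultaneously resolve the interior oscillation of $A(x,\eps^{-1}x,\omega)$ and the free-boundary oscillation of $B,G$. One cannot simply use $\varphi$ as a test function for $p_\eps$, because $\varphi$ is a barrier for the \emph{effective} equation, not the oscillatory one; instead one must glue the rescaled ODE trajectory $S^{x_0}_{q,\eps}$ (which encodes the homogenized free-boundary law) to an explicitly-solved elliptic profile on a moving interval $(x_0-N,S^{x_0}_{q,\eps}(t))$ as in \eqref{newBD}--\eqref{newBD2}, and then control the error terms using the \emph{uniform} (in the translation parameter $y$) convergence statements of Lemmas \ref{L.2.2} and Proposition \ref{P.2.13} — uniformity is essential because the free boundary point moves by an $O(1)$ amount, i.e. by $O(\eps^{-1})$ fast-variable units, over the relevant time interval. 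I would organize this as two lemmas (one producing the subflow barrier from above for $\Omega^*$, one producing the superflow barrier from below for $\Omega_*$), prove them in the second subsection, and then assemble them here into the four-step contradiction argument sketched above.
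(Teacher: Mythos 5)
Your outline reproduces the heuristic of Section \ref{sss:intro hom}, but the central step (iii) is asserted rather than proved, and as stated it cannot be carried out. The quantities you rely on --- $(p_*)_t(x_0,t_0)$, ``the effective velocity of the free boundary of $p_*$ at $(x_0,t_0)$'' --- need not exist: $p_*$ is only lower semicontinuous in time and its free boundary need not be differentiable at the touching point, so the inequality $r_0\ge (p_*)_t/|(p_*)_x|$ in your step (ii) and the identity in step (iii) are not meaningful without further work. Worse, the claim that this velocity \emph{equals} $\ol V(x_0,-(p_*)_x(x_0,t_0))$ is essentially the pointwise homogenization statement itself; the paper presents exactly this computation as a heuristic and the entire content of the proof is to replace it by a different, quantitative mechanism. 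In the actual argument one perturbs $(q_0,r_0)$ to $(q_2,r_2)$ with $r_2<\ol V(x_0,q_2)$ and a strict gap $P_2(x_3,t)<p_{\eps_k}(x_3,t)-q_2\delta_2/2$ for a linear profile $P_2$, and then Lemma \ref{lem:prop lemma 2} converts this --- via the explicit one-dimensional formula for $\partial_x p_{\eps_k}$ at the free boundary, the uniform averaging of Lemma \ref{L.2.2}, an ODE comparison with the trajectory $S^{x_2}_{q_2}$, and Proposition \ref{P.2.13} --- into the quantitative advance $x_{\eps_k}(t_0)\ge x_0+\delta_2/3$, which contradicts $x_{\eps_k}(t_0)\to x_0$ (see \eqref{5.5}). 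No derivative of $p_*$ in time, and no ``velocity of $\Gamma_{p_*}$'', is ever invoked; the contradiction is with the convergence of the $\eps$-free boundaries, not with monotonicity applied to ill-defined pointwise quantities (monotonicity of $\ol V$ enters only through the choice of the perturbed profile).

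A second gap: the effective ODE and hence $\ol V(x_0,q)$ are defined only for the coefficients frozen at $x_0$ and with zero forcing, so before any comparison with $S^{x_0}_{q,\eps}$ you must replace $p_{\eps_k}$, which solves \eqref{1.1} with slowly varying $A,B,F,G$, by a comparable solution of the frozen homogeneous problem \eqref{4.12}. This is Lemma \ref{lem for prop 1} in the paper: it uses the explicit 1D solution formulas to show the replacement costs only $O(R)$, and it must correct the boundary velocity by a factor $1+\gamma_{\eps_k}(t)$ so that the free boundary condition is preserved; the test function must be modified accordingly. Your sketch of ``gluing the ODE trajectory to an explicitly solved elliptic profile'' points at the right tools, but the barriers are never constructed, and building exact sub/supersolutions of the oscillatory free boundary problem to serve as barriers is itself a nontrivial task that the paper deliberately avoids: it works instead with the actual free boundary $X_k(t)$ of the modified solution and derives a differential inequality \eqref{3.165} for it. Without the frozen-coefficient reduction and without a rigorous replacement for step (iii), the proposal does not yet constitute a proof.
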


We will give the proof that  $\Omega_*$ is a superflow; the proof for $\Omega^*$ is analogous. Given the complexity of the proof, we divide it into lemmas. We will state the two lemmas that we need, then use them to prove the proposition, and then prove the lemmas. 

In the first lemma, we assume that a smooth function $\varphi$  touches $p_*$ from below at some $(x_0, t_0)\in \Gamma_{p_*}$, and yet the desired inequality
\[
\varphi_t(x_0,t_0)\geq \ol V(x_0,\varphi_x(x_0,t_0))|\varphi_x(x_0,t_0)|
\] 
fails. We will use this to reduce \eqref{1.1} to a problem that is independent of the slow variable.

Before stating the lemma, we introduce some notation: 
\[
N(R_0):=\{(x,t)\mid t_0-R_0<t\leq t_0, |x-x_0|\leq R_0\},
\]
\beq\label{defab}
a(z,\omega)=A(x_0,z,\omega),\quad b(z,\omega)=B(x_0,z,\omega)\quad\text{and}\quad g(z,\omega)=G(x_0,z,\omega),
\eeq
and, for $q\geq 0$, 
\beq\lb{3.11}
\begin{aligned}
\overline V(x_0,q)&:=\ol V_q(A(x_0,\cdot,\cdot),B(x_0,\cdot,\cdot), G(x_0,\cdot,\cdot)),\\
\overline V(x_0,-q)&:=\ol V_q(A(x_0,-\cdot,\cdot),B(x_0,-\cdot,\cdot),G(x_0,\cdot,\cdot)),
\end{aligned}
\eeq
where $\ol V_q(a,b)$ is given in \eqref{speed}.
Furthermore, we will often drop $\omega$ from the notations.

\begin{lemma}\label{lem for prop 1}
Suppose $(x_0,t_0)$ is a right-hand side free boundary point of $p_*$ and  $\varphi(x,t)$ is a smooth function that touches $p_* $ at $(x_0,t_0)$ locally in 
$N({R_0})$ from below. 
Let us further assume that
\begin{align}
&|\varphi_x(x_0,t_0)|\neq 0 \quad 
\text{ and} \lb{3.31}\\
&\varphi_t(x_0,t_0)+\delta_0< \ol V(x_0,\varphi_x(x_0,t_0))|\varphi_x(x_0,t_0)| \lb{4.10}
\end{align}
for some $\delta_0>0$. 

There exist $R\in (0,1)$, a subsequence $\ep_k\rightarrow 0$, a smooth function $\tilde\varphi(x,t)$,  solutions $p_{2,\eps_k}$ to
\begin{equation}\lb{4.12}
\left\{
\begin{aligned}
   & -\partial_x(a(\eps_k^{-1}x) \partial_x p_{2,\eps_k})=0 &&\,\text{ in }N(R)\cap\{p_{2,\eps_k}>0\},\\
   &\partial_t p_{2,\eps_k}=b(\eps_k^{-1}x)| \partial_x p_{2,\eps_k}|^2+g(\eps_k^{-1}x)| \partial_x p_{2,\eps_k}|
 &&\,\text{ on }N(R)\cap\partial\{p_{2,\eps_k}>0\},
\end{aligned}   
\right.
\end{equation}
with $\{x_{\eps_k}(t)\}=\partial\{p_{2,\eps_k}(\cdot,t)>0\}\cap (x_0-R,x_0+R)$ and $\lim_{k\to\infty}x_{\eps_k}(t_0)=x_0$, and a function $p_{2,*}\geq 0$ that is continuous in space and lower semi-continuous in time,
such that the following hold:
\begin{enumerate}[(i)]
\item \label{item:p2ep p*} 
$\displaystyle 
\liminf_{k\to\infty}p_{2,\eps_k}(x,t)\geq p_{2,*}(x,t)$.

\item $\tilde\varphi(x_0,t_0)=0$ and
$
|\tilde\varphi_x(x_0,t_0)|\neq 0$. 
\item $\tilde \varphi(x,t)$ touches $p_{2,*} $ at $(x_0,t_0)$  locally in $N(R)$ from below;

\item \label{item:V ineq tilde} For some $\tilde\delta_0>0$,
\[
\tilde \varphi_t(x_0,t_0)+\tilde\delta_0< \ol V(x_0,\tilde\varphi_x(x_0,t_0))|\tilde\varphi_x(x_0,t_0)|.
\]
\end{enumerate}
\end{lemma}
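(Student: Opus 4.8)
\textbf{Proof plan for Lemma \ref{lem for prop 1}.}

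The plan is to reduce the full oscillating problem \eqref{1.1} near $(x_0,t_0)$ to the problem \eqref{4.12} with coefficients frozen at $x_0$, by a combination of (a) a rescaling/magnification argument around $(x_0,t_0)$ that blows up the slow variable so that it becomes locally constant, and (b) a perturbation argument that absorbs the remaining error from freezing the coefficients into a small modification of the test function. First I would exploit the regularity assumptions: by Assumption \ref{assumption} \ref{item:assump A F} the coefficient $A(x,y,\omega)$ is H\"older in its first slot, and $B$, $\sqrt G$ are Lipschitz in the first slot; so on a small box $N(R)$ the difference between $A(x,\eps^{-1}x,\omega)$ and $a(\eps^{-1}x,\omega)=A(x_0,\eps^{-1}x,\omega)$ is $O(R^\sigma)$, and similarly for $B$ and $G$. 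Using the comparison principle for flows (Theorem \ref{L.cp}) together with the structure of Assumption \ref{assump2} \ref{item:assump1.1 2} — exactly the kind of estimate proved in Lemma \ref{lem:assumption ep} — I would trap the true solution $p_\eps$, restricted to $N(R)$ and with appropriately shifted boundary data, between sub- and super-flows built from solutions $p_{2,\eps}$ of the frozen problem \eqref{4.12}; the price is a controllable error of size $O(R^\sigma)$ in the free boundary velocity and in the Dirichlet data.

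Next I would take $p_{2,*}(x,t):=\liminf_{k\to\infty} p_{2,\eps_k}(x,t)$ for a subsequence $\eps_k\to0$ chosen (by a diagonal argument over a countable dense set of space-time points, as in the proof of Lemma \ref{L.9.2}) so that the half-relaxed liminf is realized along $\{\eps_k\}$; this gives \ref{item:p2ep p*}. The function $p_{2,*}$ is lower semicontinuous in time and, since each $p_{2,\eps_k}(\cdot,t)$ solves a one-dimensional elliptic equation on an interval, uniformly H\"older (indeed Lipschitz) in space, so it is continuous in space. Since $p_*\le p_{2,*}+O(R^\sigma)$ on $N(R)$ with equality tendency at $(x_0,t_0)$ (here I would use that, after the freezing, the elliptic operator only changes by a H\"older-small amount, so the associated functions differ by $O(R^\sigma)$), the function $\varphi$ — after adding a small correction $-c R^\sigma - \gamma(|x-x_0|^2+|t-t_0|^2)$ and adjusting the constant so it still vanishes at $(x_0,t_0)$ — becomes a smooth function $\tilde\varphi$ touching $p_{2,*}$ from below in $N(R)$; this is \ref{item:p2ep p*}--(iii). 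The quadratic bump is harmless for the gradient and can be chosen so small (depending on $\delta_0$) that \eqref{3.31} is preserved with $|\tilde\varphi_x(x_0,t_0)|\neq0$, giving (ii), and so that the strict inequality \eqref{4.10} survives with a new gap $\tilde\delta_0:=\delta_0/2>0$, giving \ref{item:V ineq tilde}; here I would also invoke the continuity of $\ol V$ in its arguments (Lemma \ref{L.4.8}, using $\ol V(x_0,\cdot)=\ol V_q(A(x_0,\cdot,\cdot),B(x_0,\cdot,\cdot),G(x_0,\cdot,\cdot))$ as in \eqref{3.11}) to ensure that the small changes in the coefficients and in the slope produce only an $O(R^\sigma)+O(\gamma)$ change in $\ol V$, which $\delta_0/2$ absorbs after first fixing $R$ and $\gamma$ small.

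The main obstacle I anticipate is the bookkeeping of the three independent small parameters — the H\"older freezing error $R^\sigma$, the quadratic-bump strength $\gamma$, and the homogenization scale $\eps$ — in the right order: one must fix $R$ (hence the freezing error) and $\gamma$ first, using the gap $\delta_0$, before passing $\eps_k\to0$, and one must check that the comparison-principle sandwiching of $p_\eps$ between flows built from $p_{2,\eps}$ does not silently move the free boundary location $x_{\eps_k}(t)$ away from $x_0$ — this is why the statement includes the normalization $\lim_k x_{\eps_k}(t_0)=x_0$, which I would enforce by choosing the Dirichlet/initial data of the auxiliary problems \eqref{4.12} to match $p_*$ at the lateral and bottom boundary of $N(R)$ up to the $O(R^\sigma)$ error and then using the continuity-in-time of $\Omega_*$ together with the definition of the half-relaxed limit. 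A secondary technical point is verifying that $p_{2,*}$ is genuinely a valid object to test against — i.e. that the local-min/touching structure is inherited under the liminf — which follows the same pattern as the passage to limits in Lemma \ref{L.9.2} but must be done keeping track of the free boundary rather than just interior values.
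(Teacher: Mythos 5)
Your plan diverges from the paper's argument at the two places where the real work happens, and in both places the step you propose would not go through as described. First, the heart of the lemma is producing \emph{exact} solutions $p_{2,\eps_k}$ of the frozen problem \eqref{4.12} whose free boundary is (up to the limit) the \emph{same} curve $x_{\eps_k}(t)$ as that of the original $p_{\eps_k}$, so that $\lim_k x_{\eps_k}(t_0)=x_0$ comes for free. Your proposal to trap $p_\eps$ between sub- and super-flows of the frozen problem via Theorem \ref{L.cp} does not produce such objects: the comparison theorem in the paper is global in space for compactly supported flows, not a local comparison on the box $N(R)$ with time-dependent lateral Dirichlet data, and even granting a local version, sandwiching only locates the frozen free boundaries within an $O(R^{1+\sigma})$-type window around $x_{\eps_k}(t)$, which is not the exact normalization the lemma (and its later use against \eqref{5.5} in Lemma \ref{lem:prop lemma 2}) requires. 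The paper's device, which your plan is missing, is to keep the support of $p_{\eps_k}$ fixed: solve the frozen \emph{elliptic} equation $-\partial_x(a(\eps_k^{-1}x)\partial_x p_{1,\eps_k})=0$ in the same moving domain with the same boundary data (explicit one-dimensional formulas give $|p_{\eps_k}-p_{1,\eps_k}|\le C(x_{\eps_k}(t)-x)_+R$ and an $O(R)$ gradient error at the free boundary), and then multiply by a time-dependent factor $1+\gamma_{\eps_k}(t)$ with $|\gamma_{\eps_k}|\le CR$, chosen pointwise in $t$ so that the frozen free boundary law \eqref{992} holds exactly; this yields $p_{2,\eps_k}$ solving \eqref{4.12} with the identical free boundary. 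Nothing in your sketch replaces this construction.

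Second, your modification of the test function cannot work as written. The discrepancy between $p_*$ and the frozen-coefficient profile is of \emph{first order} in the distance to the free boundary: the paper's $p_{2,*}$ (an explicit rescaled linear profile built from $\Omega_*$, not a liminf of the $p_{2,\eps_k}$) satisfies $p_{2,*}\ge (1-C_1R)p_*-CR(x_*(t)-x)_+$, so near $(x_0,t_0)$ the deficit below $\varphi$ is of size $CR\,[(x_0-x)+(t_0-t)]$. A correction of the form $-cR^\sigma-\gamma(|x-x_0|^2+|t-t_0|^2)$ either destroys the vanishing at $(x_0,t_0)$ (the constant) or is dominated by the linear deficit arbitrarily close to the touching point (the quadratic), so $\tilde\varphi\le p_{2,*}$ fails in every neighborhood; the correct fix is the paper's \emph{linear} correction $\tilde\varphi=\varphi-C'R(x_0-x+t_0-t)$, which perturbs $\tilde\varphi_x$ and $\tilde\varphi_t$ only by $O(R)$ and preserves \eqref{4.10} with $\tilde\delta_0=\delta_0/2$ because $\delta_0$ is independent of $R$ (no appeal to the coefficient-continuity statement of Lemma \ref{L.4.8} is needed here, since $\ol V$ is already frozen at $x_0$). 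Your item (i) also needs the quantitative lower bounds $p_{\eps_k}\ge c(x-x_{\eps_k}(t))_+$ and the chain \eqref{333}, \eqref{990}, \eqref{331} rather than a diagonal liminf alone. As it stands, the proposal identifies the right target (freeze at $x_0$, pay $O(R)$ using the gap $\delta_0$) but lacks the two mechanisms that make the reduction rigorous.
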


The following lemma is at the heart of our proof. We use a homogenization argument to obtain a contradiction with the fact that $\lim_{\ep\rightarrow 0}x_{\ep}(t_0)=x_0$.  Roughly speaking, we suppose that the velocity is ``too fast," i.e. \eqref{4.10} holds. We also suppose that, for \emph{all} times $t\in (t_-, t_0)$, and at some point $x_3<x_0$, the $p_{\ep_k}$ are strictly bigger than a linear profile $P(x,t)$ that moves with speed \emph{bigger} than the homogenized velocity. Then we conclude that, for all $\ep_k$ small enough, the free boundary of the $p_{\ep_k}$ at time $t_0$ must be to the right of $x_0$: in other words, the profile pushes $x_{\ep_k}(t)$ too far to the right. This will yield the desired contradiction in the proof of Proposition \ref{P.3.2}. We point out that Proposition \ref{P.2.13} (uniform convergence of $S^{x_0}_{q,\ep}$) is also an essential element of the proof of Lemma \ref{lem:prop lemma 2}. 

\begin{lemma}\label{lem:prop lemma 2}
Suppose $R\in (0,1)$ and $p_{\ep_k}$ satisfies \eqref{4.12} in $N(R)\cap\{p_{\ep_k}>0\}$. Suppose $x_{\ep_k}(t)$ is a right-hand side free boundary point of $p_{\ep_k}$. Let the profile $P_2$ be given by,
\[
P_2(x,t) = q_2(-r_2(t_0-t)+x_0+\delta_2 -x), 
\]
where $r_2$, $q_2$, $\delta_2$ are positive and such that
\begin{equation}
    \label{item:4.1'} r_2<\ol V(x_0, q_2).
\end{equation}
Furthermore, suppose that the following hold for some $t_-\in (t_0-R, t_0)$: 
\begin{enumerate}[(i)]
    \item \label{item:4.0''} 
    For $x_2:= -r_2(t_0-t_-)+x_0+\delta_2$, we have $x_2\in \Gamma_{P_2}(t_-)$ and  $x_2\in (x_0-R, x_0)$.
    \item \label{item:4.2} For all $k$ sufficiently large, we have $x_{\ep_k}(t_-)>x_2$. 
    \item \label{item:4.7}  There exists $x_3\in (x_0-R, x_2)$ such that, for all $k$ large and for all $t\in (t_-, t_0)$,
    \beq
    \label{eq:item:4.7}
    P_2(x_3, t)<p_{\ep_k}(x_3, t)-q_2\delta_2/2.
    \eeq
\end{enumerate}
Then we have, 
\begin{equation}
    \label{eq:lem 2 conclusion}
    x_\ep(t_0)\geq x_0+\delta_2/3.
\end{equation}
\end{lemma}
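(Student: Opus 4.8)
The plan is a barrier argument. For each small $\ep_k$ I will build a viscosity subflow $w_k$ of \eqref{4.12} whose positivity set is an interval $(x_3,s_k(t))$, $t\ge t_-$, with $s_k(t_-)=x_2$ and with free boundary $s_k(\cdot)$ equal to the $\ep_k$-rescaled solution of the effective ODE \eqref{FB}; I will satisfy the free-boundary condition strictly, so that the comparison argument of Theorem \ref{L.cp}, carried out inside $N(R)$, forces strict separation $s_k(t)<x_{\ep_k}(t)$ for as long as $s_k$ stays in $N(R)$. Letting $\ep_k\to 0$ and using the uniform convergence of Proposition \ref{P.2.13}, one gets $s_k(t_0)\to x_2+\ol V(x_0,q)(t_0-t_-)=x_0+\delta_2+(\ol V(x_0,q)-r_2)(t_0-t_-)$, which by the choice of slope below exceeds $x_0+\delta_2/3$; hence $x_{\ep_k}(t_0)>s_k(t_0)\ge x_0+\delta_2/3$ for $k$ large. (If instead $s_k$ leaves $N(R)$ on the right at some $t^{**}<t_0$, then already $x_{\ep_k}(t_0)\ge x_{\ep_k}(t^{**})>s_k(t^{**})=x_0+R\ge x_0+\delta_2/3$.) In either case \eqref{eq:lem 2 conclusion} holds.

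\textbf{Choice of slope.} The key preliminary is that the barrier's macroscopic slope must be strictly below $q_2$: the value of $w_k$ at the fixed left endpoint $x_3$ grows at macroscopic rate $(\text{slope})\cdot(\text{front speed})$, and must stay under $p_{\ep_k}(x_3,\cdot)$, whose guaranteed growth rate from \eqref{eq:item:4.7} is only $q_2 r_2$. Since $q\mapsto\ol V(x_0,q)$ is continuous and nondecreasing with $\ol V(x_0,q)\le qC_{\max}+\|g\|_\infty$ (Lemma \ref{L.4.8} and Corollary \ref{P.2.12}), the strictly increasing map $h(q):=q\,\ol V(x_0,q)$ satisfies $h(0^+)=0<q_2 r_2<h(q_2)$, so $h(q_1)=q_2 r_2$ for a unique $q_1\in(0,q_2)$, and then $\ol V(x_0,q_1)=q_2 r_2/q_1>r_2$ because $q_1<q_2$. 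Fix $q\in(0,q_1)$ so close to $q_1$ that $\ol V(x_0,q)>r_2$ still, and a slightly smaller $q'<q$ with the same two features $\ol V(x_0,q')>r_2$ and $q\,\ol V(x_0,q')<q_2 r_2$. Set $w_k(x,t):=q\,\ol a\int_x^{s_k(t)}a(\ep_k^{-1}y)^{-1}\,dy$ on $(x_3,s_k(t))$, so that $-\partial_x(a(\ep_k^{-1}x)\partial_x w_k)=0$, $w_k(s_k(t),t)=0$ and $|\partial_x w_k(s_k(t),t)|=q\,\ol a/a(\ep_k^{-1}s_k(t))=q\,\mu(\ep_k^{-1}s_k(t))$, with $s_k$ solving $\dot s_k=q'\,b(\ep_k^{-1}s_k)\,\mu(\ep_k^{-1}s_k)+g(\ep_k^{-1}s_k)$, $s_k(t_-)=x_2$ (this is the $\ep_k$-rescaling of $S^{x_2}_{q'}$, cf. \eqref{FBeps}, \eqref{2.68}, \eqref{3.14}). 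On $x=s_k(t)$ one has $\partial_t w_k=\dot s_k\,|\partial_x w_k|$, and since $\dot s_k=q'b\mu+g<qb\mu+g=b|\partial_x w_k|+g$, the free-boundary inequality of \eqref{4.12} holds strictly; thus $w_k$ is a subflow of \eqref{4.12} in $N(R)$, strictly at the free boundary, as long as $s_k(t)<x_0+R$.

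\textbf{Homogenization and comparison.} By Lemma \ref{L.2.2} applied to $a=A(x_0,\cdot,\cdot)$ (the shifts $\ep_k^{-1}x_2,\ep_k^{-1}x_3$ are admissible because $x_2,x_3$ are bounded and $s_k(t)-x_3\ge x_2-x_3>0$), $\frac1{s_k(t)-x_3}\int_{x_3}^{s_k(t)}a(\ep_k^{-1}y)^{-1}\,dy\to 1/\ol a$ uniformly on $[t_-,t_0]$, so $w_k(x_3,t)\to q\big(x_2-x_3+\ol V(x_0,q')(t-t_-)\big)$; by Proposition \ref{P.2.13}, $s_k(t)\to x_2+\ol V(x_0,q')(t-t_-)$ uniformly on $[t_-,t_0]$, where $\ol V(x_0,q')=\ol V_{q'}(A(x_0,\cdot,\cdot),B(x_0,\cdot,\cdot),G(x_0,\cdot,\cdot))$ by \eqref{3.11}, \eqref{defab}. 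Comparing these two affine-in-$t$ limits and using $q\,\ol V(x_0,q')<q_2 r_2=$ (growth rate of $P_2(x_3,\cdot)$) together with \eqref{eq:item:4.7}, one obtains $w_k(x_3,t)\le p_{\ep_k}(x_3,t)$ for every $t\in[t_-,t_0]$ once $\ep_k$ is small. Since moreover $\overline{\{w_k(\cdot,t_-)>0\}}=[x_3,x_2]\subset(x_0-R,x_{\ep_k}(t_-))$ by \eqref{item:4.0''}, \eqref{item:4.2}, \eqref{item:4.7}, and $x_{\ep_k}(\cdot)$ is nondecreasing (the velocities in \eqref{4.12} are nonnegative), the comparison argument of Theorem \ref{L.cp} applied in $N(R)$ — the lateral boundary $x=x_3$ being handled by $w_k\le p_{\ep_k}$ there, and $w_k$ vanishing on $x=s_k(t)$ where $p_{\ep_k}\ge 0$ — yields $\overline{\{w_k(\cdot,t)>0\}}\subset\{p_{\ep_k}(\cdot,t)>0\}$, i.e. $s_k(t)<x_{\ep_k}(t)$, for $t$ up to $t^{**}$, the first time (if any) that $s_k=x_0+R$. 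The dichotomy of the first paragraph then finishes the proof.

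\textbf{Main obstacle.} The hard part is the construction in the second step: $w_k$ must be a subsolution of the \emph{oscillating} equation \eqref{4.12}, not of the homogenized one, so its front cannot simply move at a constant speed in $(r_2,\ol V(x_0,q))$ — the free-boundary inequality would fail at the (random) times when the microscopic normal velocity dips below that constant — and must instead track the rescaled effective-ODE trajectory, which is exactly where the quantitative, \emph{uniform} ergodic theorems (Lemma \ref{L.2.2} for the interior profile, Proposition \ref{P.2.13} for the front) are needed rather than a.e.\ convergence. The attendant slope bookkeeping — producing $q<q_2$ with $\ol V(x_0,q)>r_2$ yet $q\,\ol V(x_0,q)<q_2 r_2$ — is the other tight point, and is made possible only by the continuity and monotonicity of $\ol V$ from Lemma \ref{L.4.8}.
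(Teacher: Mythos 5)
Your overall strategy is a mirror image of the paper's: the paper works with the true free boundary $X_k(t)$ of the rescaled solution, uses the explicit one-dimensional formula for $\partial_x v_{\ep_k}(X_k(t),t)$ together with hypothesis \eqref{eq:item:4.7}, Lemma \ref{L.2.2} and Proposition \ref{P.2.13} to show that $X_k$ is a classical a.e.\ supersolution of a slightly slowed effective ODE (inequality \eqref{3.165}), and then concludes by \emph{ODE} comparison alone. You instead build a subsolution barrier $w_k$ whose front is a slowed rescaled effective trajectory and try to conclude by \emph{PDE/flow} comparison for the oscillating problem. Your slope bookkeeping ($q'<q<q_2$ with $\ol V(x_0,q')>r_2$ and $q\,\ol V(x_0,q')<q_2r_2$), the role of \eqref{eq:item:4.7} at the lateral point $x_3$, and the use of the two uniform ergodic statements are all correctly identified and match the quantitative heart of the paper's argument.

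The genuine gap is the comparison step. Theorem \ref{L.cp} is a global comparison principle for admissible sub- and superflows on $\R\times(0,T)$, whose associated functions solve the elliptic equation with \emph{zero} boundary data on their entire (bounded) positivity sets; it cannot simply be ``carried out inside $N(R)$'' for a barrier with nonzero lateral data at $x=x_3$ and an ambient solution $p_{\ep_k}$ that satisfies \eqref{4.12} only locally in $N(R)$. In fact $w_k$ is not a viscosity subflow in the paper's sense at all: the associated function of the set $(x_3,s_k(t))$ for the homogeneous equation in \eqref{4.12} with zero boundary data is identically zero, so the flow framework does not see your barrier. What you actually need is a local first-crossing argument: at the first time $t^*$ with $s_k(t^*)=x_{\ep_k}(t^*)$, elliptic comparison on $(x_3,s_k(t))$ gives $w_k\le p_{\ep_k}$ up to $t^*$, and you would then test the viscosity supersolution property of $p_{\ep_k}$ at the touching point to contradict the strict inequality $\dot s_k=q'b\mu+g<qb\mu+g$. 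But this step is not automatic either: $w_k$ is only Lipschitz-in-$x$ in its derivative (since $a$ is merely Lipschitz in the fast variable), hence not an admissible $C^{2,1}_{x,t}$ test function, so you must construct a genuinely smooth strict subsolution touching from below (in the spirit of Proposition \ref{P.3.9}) and also rule out pathologies at times where $p_{\ep_k}$ is discontinuous (support components merging). None of this is supplied, and it is exactly the technical content your proof is missing; the paper avoids it entirely by never comparing at the PDE level, instead showing via the explicit formula and a.e.\ continuity that the free boundary itself satisfies the ODE inequality classically a.e., after which only the elementary comparison principle for ODEs is invoked. A minor additional point: your exit dichotomy ($s_k$ reaching $x_0+R$ before $t_0$) implicitly uses $\delta_2\le 3R$, which is not part of the hypotheses, though it is harmless in the setting where the lemma is applied.
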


\begin{proof}[Proof of Proposition \ref{P.3.2}]
Let us prove the claim for $\Omega_*$. 
Take an arbitrary right-hand side free boundary point $(x_0,t_0)$ and let $\varphi(x,t)$ be a smooth function that touches $p_* $ at $(x_0,t_0)$ locally in $\{(x,t)\mid p_* (x,t)>0,t\leq t_0\}$ from below. Namely, $\varphi(x_0,t_0)=0$ and $\varphi(x,t)< p_* (x,t)$ in $N(R_0)\cap\{p_* (x,t)>0\}$ for some $R_0>0$. 
Let us further assume that
\eqref{3.31}
 holds. 
Since $(x_0,t_0)$ is a right-hand side free boundary point,  the free boundary is propagating to the right and $\varphi_x(x_0,t_0)<0$. Since the support of $p_* $ is  non-decreasing, we have $\varphi_t(x_0,t_0)\geq 0$. 
In view of Definition \ref{D.1.3}, to show that $p_* $ is a supersolution, it suffices to show,
\[
\varphi_t(x_0,t_0)\geq \ol V(x_0,\varphi_x(x_0,t_0))|\varphi_x(x_0,t_0)|,
\]
where $\ol V$ is given in \eqref{3.11}.
Thus, we assume for contradiction that this fails, so that \eqref{3.31} and \eqref{4.10} hold 
for some $\delta_0>0$ which is independent of $R_0$.

We are now exactly in the setting of Lemma \ref{lem for prop 1}. Applying the lemma,  we find that there exist  $p_{2,\eps_k}$ satisfying
the equation \eqref{4.12}, $R>0$, and a smooth function $\tilde{\varphi}$, such that items \ref{item:p2ep p*}-\ref{item:V ineq tilde} of Lemma \ref{lem for prop 1} hold. 
Therefore, without loss of generality, we can simply assume that $p_{\eps_k}=p_{2,\eps_k}$ satisfies \eqref{4.12}, 
\begin{equation}
    \label{eq:liminf pep}
    \liminf_{k\to\infty}p_{\eps_k} \geq  p_* , 
    \end{equation}
    and its free boundary $(x_{\eps_k}(t),t)$ satisfies
\beq\lb{5.5}
x_{\eps_k}(t_0)\to x_0 \quad\text{as }k\to \infty.
\eeq
For the remainder of the proof of this proposition, we  write $(p_*,\varphi,\delta_0)$ instead of $(p_{2,*},\tilde\varphi,\tilde\delta_0)$.

 Let $q_0:=-\partial_x\varphi(x_0,t_0)>0$ and $r_0:=\partial_t\varphi(x_0,t_0)/q_0>0$. Since $\varphi$ touches $p_* $ from below and \eqref{4.10} holds, by taking $\delta_1$ sufficiently small, we can guarantee that for 
\[
q_1:=q_0-\delta_1>0\quad\text{and}\quad r_1:=r_0+\delta_1,
\]
there exists $R_1=R_1(\delta_1)\in (0,R)$ such that 
\begin{align}
    \lb{4.6}
r_1&<\ol V(x_0,q_1) \text{ and}\\
\lb{4.3}
P_1(x,t)&:=q_1(-r_1(t_0-t)+(x_0-x))_+<p_* (x,t)
\end{align}
for all $(x,t)\in \left(\overline{N(R_1)\cap \{P_1>0\}}\right)\setminus\{(x_0,t_0)\}$.

We remark that, since $r_1$ is the speed of  the profile and $q_1$ is its slope, the inequality  \eqref{4.6} means that the profile $P$  moves with speed smaller than $\overline{V}(x_0, q_1)$. 
Furthermore, then inequality \eqref{4.3} implies that the free boundary of $p_* (\cdot,t)$ in $N(R)$, which we denote by $x_{* }(t)$, lies to the right of the free boundary of $P_1(t)$.  Since $\Gamma_{P_1}(t)=\{-r_1(t_0-t)+x_0\}$, we obtain,
\beq
\label{eq:x* Gamma P1}
x_* (t)>-r_1(t_0-t)+x_0 \text{ for all }t\in (t_0-R,t_0).
\eeq

Now we will define a perturbed slope and speed, $q_2$ and $r_2$, so that the resulting  profile, which we will call $P_2$ and define in \eqref{eq:defP2} below,  satisfies the hypotheses of Lemma \ref{lem:prop lemma 2}. To this end, first fix some $t_{-}\in (t_0-R_1/r_1,t_0)$; the requirement  $t_{-}>t_0-R_1/r_1$ ensures   that
\beq
\label{P1x1t-}
x_1:=-r_1(t_0-t_{-})+x_0\in \Gamma_{P_1}(t_{-})\quad\text{satisfies}\quad x_1>x_0-R_1.
\eeq
In particular, since \eqref{eq:x* Gamma P1} holds for $t=t_-$, we find,
\beq\label{eq:x-x*t-}
x_*(t_-)>x_1.
\eeq
Next, for $\delta_2\gg\eps>0$ small and to be determined, let $q_2$ be the unique positive root of
\[
q\mapsto 2\delta_2 q^2+r_1q_1(t_0-t_-)q -r_1q_1^2(t_0-t_-),
\]
and define
\beq\lb{4.4}
r_2:=r_1q_1/q_2.
\eeq
These definitions, together with an elementary calculation, yield
\begin{align}
\label{6.15.1}
r_1q_1(q_1-q_2)(t_0-t_{-})&=2\delta_2q_2^2\\
\label{6.15.2}
r_2&>r_1,\\
q_2&<q_1, \nonumber 
\end{align}
 as well as $\lim_{\delta_2\to 0}q_2=q_1$, and $\lim_{\delta_2\to 0}r_2=r_1$.
 As a consequence,  \eqref{4.6} and the continuity of $\ol V$ imply that
\beq\lb{4.1'}
r_2<\ol V(x_0, q_2)
\eeq
holds for $\delta_2>0$ sufficiently small. 
Next, for $\lambda>0$ to be chosen shortly, define 
\beq
\label{6.16.5}
x_2:=-r_2(t_0-t_{-})+x_0+\delta_2,\quad x_{3}:=x_2-\lambda.
\eeq
We note $\lim_{\delta_2\rightarrow 0, \lambda\rightarrow 0}x_3=x_0$. Since we have $x_1>x_0-R_1$ --- see \eqref{P1x1t-} --- we can  ensure 
\[
x_3>x_0-R_1
\]
holds by taking $\delta_2>0$ and $\lambda>0$ sufficiently small. Finally, let us take $\delta_2>0$ small enough to guarantee that $q_2>\frac{3}{4}q_1$ holds. As a consequence, we find,  
\begin{align}
x_1-x_2&=(r_2-r_1)(t_0-t_-) \delta_2 \quad &\text{ by \eqref{6.16.5}}\nonumber\\
& = r_1\left(\frac{q_1}{q_2}-1\right) (t_0-t_-) &\text{ by \eqref{4.4}} \nonumber\\
&=\frac{2\delta_2q_2}{q_1}-\delta_2 \quad &\text{ by \eqref{6.15.1}} \nonumber\\
&>\frac{1}{2}\delta_2. \lb{4.20}
\end{align}

Now we fix $\delta_2$ and $\lambda$; therefore, $r_2$, $q_2$, $x_2$ and $x_3$ are also now fixed, and all these constants are independent of $\omega$ and $\ep$. 
Finally, we define the perturbed profile 
\beq
\label{eq:defP2}
P_2(x,t):=q_2(-r_2(t_0-t)+(x_0+\delta_2-x))_+.
\eeq
See Figure \ref{figure Prop 6.2} for an illustration. It is direct to check that
\beq\lb{4.0''}
x_0+\delta_2\in \Gamma_{P_2}(t_0)\quad \text{and}\quad
x_2\in \Gamma_{P_2}(t_{-}).
\eeq
Furthermore, \eqref{eq:x-x*t-} and  \eqref{4.20} imply,
\beq\lb{4.1}
x_2+\frac12\delta_2<x_1<x_* (t_{-}).
\eeq
In particular, we note,
\beq
\label{x3x0}
x_3<x_2<x_1<x_0,
\eeq
 where we recall $x_3$ was defined in \eqref{6.16.5}.

From \eqref{x3x0} and \eqref{4.0''} we find $P_2(x_3, t)>0$   for all $t\in (t_-, t_0)$; similarly, \eqref{x3x0} and \eqref{P1x1t-} imply $P_1(x_3, t)>0$ for all $t\in (t_-, t_0)$. Using $P_2(x_3, t)>0$, followed by \eqref{4.4}, \eqref{6.15.2}, the definition of $P_1$, the positivity of $P_1(x_3, t)$, and then \eqref{4.3}, we find, for all $t\in (t_{-},t_0)$,
\begin{align*}
P_2( x_3,t)&= q_2(-r_2(t_0-t)+x_0+\delta_2- x_3)\\
&\leq  q_1(-r_1(t_0-t)+x_0- x_3)-q_2\delta_2= P_1(x_3, t)-q_2\delta_2<p_* ( x_3,t)-q_2\delta_2.    
\end{align*}

Recall that \eqref{eq:liminf pep} holds,  and $x_3=-r_2(t_0-t_{-})+x_0+\delta_2-\lambda$. Thus, for $t\in (t_{-},t_0)$,
\beq\lb{4.7}
q_2(r_2(t-t_{-})+\lambda)=P_2(x_{3},t)<p_{\eps_k}(x_{3},t)-q_2\delta_2/2\quad \text{ when $k$ is large. }
\eeq
It follows from \eqref{4.1} that,
for $k$ sufficiently large,
\beq\lb{4.2}
 x_{\eps_k}(t_{-})>x_2\in \Gamma_{P_2}(t_{-}).
\eeq
where $x_{\eps_k}(t_{-})$ is such that $x_{\eps_k}(t_{-})\in \Gamma_{p_{\eps_k}}(t_-)$ and $
\lim_{k\to\infty}\lim_{t\to t_0}x_{\eps_k}(t)\to x_0$.

\begin{figure}
\label{figure Prop 6.2}
    \centering
    \includegraphics[scale=.5]{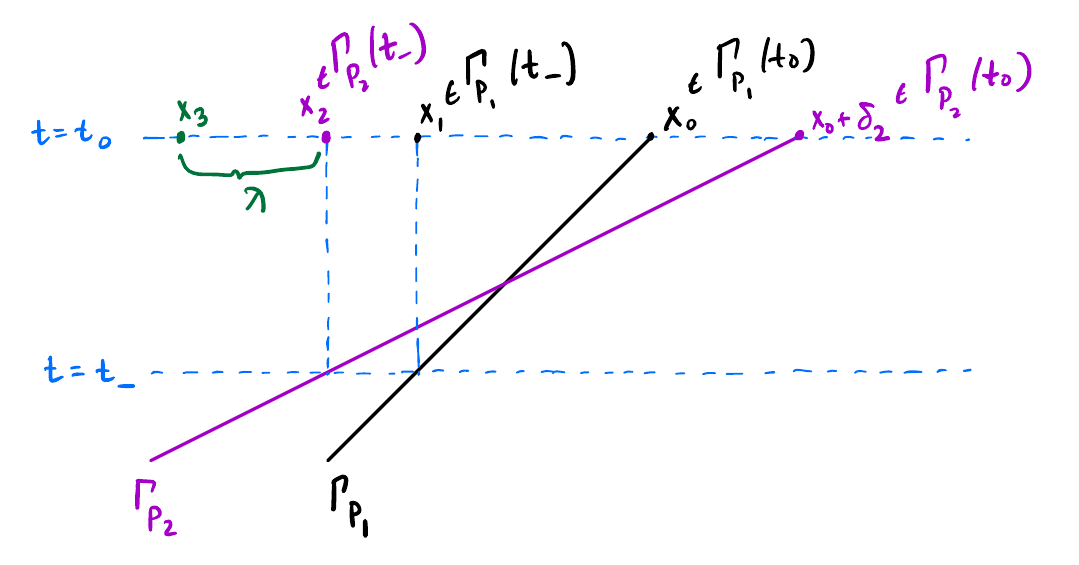}
    \caption{Illustration for the proof of Proposition \ref{P.3.2}.}
\end{figure}

We are now in a position to apply Lemma \ref{lem:prop lemma 2}. We note that \eqref{item:4.1'} holds due to \eqref{4.1'}. Further, items \ref{item:4.0''}, \ref{item:4.2}, and \ref{item:4.7} hold due to \eqref{4.0''}, \eqref{4.2}, and \eqref{4.7}, respectively. 
The conclusion of Lemma \ref{lem:prop lemma 2}, namely \eqref{eq:lem 2 conclusion}, contradicts \eqref{5.5}. 
Thus, we have reached the desired contradiction and proved that $p_* $ is a supersolution.
\end{proof}

\subsection{Proofs of lemmas}
\label{ss:proof of lemmas}
We begin with:

\begin{proof}[Proof of Lemma \ref{lem for prop 1}]
Throughout the proof, we fix $\omega$. Recall that the $p_\eps$ are  viscosity solutions to \eqref{1.1},  and therefore they  are also the associated functions to the corresponding viscosity flow.

Let us denote the free boundary of $p_* $ in $N(R_0)$ as $(x_* (t),t)$ and the free boundary of $p_{\eps}$ as $(x_{\eps}(t),t)$. We have $x_* (t)\leq x_0$ for $t\in (t_0-R_0,t_0)$. The definition of $\Omega_*$ implies that,   for the fixed $\omega$, 
\beq\lb{999}
x_{\eps_k}(t_k)\to x_* (t_0)\quad\text{ as } k\to\infty
\eeq
along a sequence of $\eps_k\to 0$ and $t_k\to t_0$.
Lemma \ref{L.9.2} (interior homogenization) yields that $\liminf_{k\to\infty}p_{\eps_k}(\cdot,t)$ is a  non-negative supersolution to \eqref{7.2} or \eqref{998} in its positivity set. The stability of viscosity solutions yields that 
\[
\liminf_{k\to\infty,\,(x_k,t_k)\to (x,t)}p_{\eps_k}(x_k,t_k)
\]
is again a non-negative supersolution in its positivity set; this positivity set coincides with $\Omega_*$ by the definition of $\Omega_*$. Since $p_*$ solves the same equation in $\Omega_*$ with $0$ boundary data, we have 
\beq\lb{990}
\liminf_{k\to\infty,\,(x_k,t_k)\to (x,t)}p_{\eps_k}(x_k,t_k)\geq p_*(x,t).
\eeq

By replacing $p_{\eps_k}(x,t)$ by $p_{\eps_k}(x,t-t_k+t_0)$, we can assume that $t_k=t_0$.

Let $R\leq R_0$ be small enough such that for all large $k$ we have $p_{\eps_k}(x,t)>0$  in $N_R$. 
We have that $p_{\eps_k}$ satisfies the following equation
\begin{equation}
\label{eq:pepk}
\left\{
\begin{aligned}
    -\partial_x(A(x,\eps_k^{-1}x ) \partial_x  p)&=F(x,\eps_k^{-1}x ) &&\quad\text{ in }N(R)\cap\{p_{\eps_k}>0\}:=N_{R,k},\\
   p(x,t)&=p_{\eps_k}(x,t )  &&\quad\text{ if }x\in\{x_0-R,x_{\eps_k}(t)\}.
\end{aligned}   
\right.
\end{equation}
A direct computation thus yields that    $p_{\eps_k}$ is given by the following formula in $N_{R,k}$: 
\begin{equation}
\label{eq:expression pep}
p_{\eps_k}(x,t)=   \int_x^{x_{\eps_k}(t)}\frac{C_t}{A(y,\eps_k^{-1}y)}-\frac{\int_{y}^{x_{\eps_k}(t)}F(z,\eps_k^{-1}z)dz}{A(y,\eps_k^{-1}y)}dy
\end{equation}
where  $C_t$ is determined by the boundary value $p_{\eps_k}(x_0-R,t)$ and is given by,
\beq
\label{eq:expression C}
C_t=  \left(\int_{x_0-R}^{x_{\eps_k}(t)}\frac{dy}{A(y,\eps_k^{-1}y)}\right)^{-1} \left(\int_{x_0-R}^{x_{\eps_k}(t)}\frac{\int_{y}^{x_{\eps_k}(t)}F(z,\eps_k^{-1}z)dz}{A(y,\eps_k^{-1}y)}dy +p_{\eps_k}(x_0-R,t)\right).
\eeq
We will use the following facts.
 Since 
 $x_{\eps_k}(t_0)\to x_*(t_0)$, we have that $x_{\eps_k}(t)-x_0+R\leq 2R$ for $t\leq t_0$ and for $k$ sufficiently large.  Since $p_{\eps_k}(\cdot,t)$ is uniformly Lipschitz (locally uniformly in time), $p_{\eps_k}(x_0-R,t)\leq CR$ for all $t\in (t_0-R,t_0)$. 

Further, let us fix $z<x_0$ such that $p_*(z,t)\geq \tilde{c}>0$ for all $t\in [t_0-R, t_0]$, where $\tilde{c}>0$ depends only on $p_*$, and thus is independent of $k$ and $\omega$). Thus, according to \eqref{990}, we have that, for all $k$ large enough,
$p_{\ep,k}(x,t)\geq \frac{c}{2}$ for all $t\in [t_0-R, t_0]$. Since $C(x-x_{\ep_k}(t))$, where $C$ depends only on the assumptions,  is a subsolution of the elliptic equation in \eqref{eq:pepk}  for   each fixed  $t\in [t_0-R, t_0]$, we therefore find that
\beq\lb{334}
p_{\eps_k}(x,t)\geq c(x-x_{\eps_k}(t))_+
\eeq
holds  in $N_{R,k}$, for some $c>0$ independent of $k$ and $\omega$. Similarly, we also obtain, for $(x,t)\in N_{R,*}$,
\beq\lb{334.5}
p_{*}(x,t)\geq c(x-x_{\eps_k}(t))_+.
\eeq

Next,  let $p_{1,{\eps_k}}(\cdot,t)$ be the unique solution to the homogeneous equation
\begin{equation*}
\left\{
\begin{aligned}
   & 
   -\partial_x(a(\eps_k^{-1}x ) \partial_x  p_{1,{\eps_k}})=0 &&\quad\text{ in }N_{R,k},\\
  & p_{1,{\eps_k}}(x,t)=p_{\eps_k}(x,t )  &&\quad\text{ if }x\in\{x_0-R,x_{\eps_k}(t)\},
\end{aligned}   
\right.
\end{equation*}
where we've used the notation \eqref{defab}. Note that for $x\in [x_0-R,x_0+R]$, 
\[
|A(x,z)-a(z)|\leq C|x-x_0|\leq CR.
\]
Upon writing an expression analogous to \eqref{eq:expression pep}-\eqref{eq:expression C} but for $p_{1,\ep_k}$, and using the previous estimate and the fact that $F$ is uniformly bounded, an explicit calculation yields that,  for all $(x,t)\in N_{R,k}$,
\begin{align}
\lb{994}
|p_{\eps_k}(x,t)-p_{1,{\eps_k}}(x,t)|&\leq C(x_{\eps_k}(t)-x)_+ R \quad \text{ and }\\
\lb{991}
|\partial_x p_{\eps_k}(x_{\eps_k}(t),t)-\partial_xp_{1,{\eps_k}}(x_{\eps_k}(t),t)|&\leq CR,
\end{align}
where $C$ is independent of $R\leq R_0$.

However, here $p_{1,{\eps_k}}$ does not satisfy the free boundary condition at $x=x_{{\eps_k}}(t)$. We further make the following adjustment.
Let $\gamma_{\eps_k}(t )$, which depends  on $\omega$, be such that
\beq\lb{992}
\begin{aligned}
&\quad\, (1+\gamma_{\eps_k}(t ) )b(\eps_k^{-1}x_{\eps_k}(t))| \partial_x p_{1,{\eps_k}}(x_{\eps_k}(t),t)| +g(\eps_k^{-1}x_{\eps_k}(t))\\
&=B(x_{\eps_k}(t),\eps_k^{-1}x_{\eps_k}(t))| \partial_x p_{{\eps_k}}(x_{\eps_k}(t),t)| +G(x_{\eps_k}(t),\eps_k^{-1}x_{\eps_k}(t)),    
\end{aligned}
\eeq
and define 
\[
p_{2,{\eps_k}}(x,t):=(1+\gamma_\ep(t))p_{1,{\eps_k}}(x,t).
\]
Since $\partial_x p_{1,{\eps_k}}(x_{\eps_k}(t),t)$ and $\partial_x p_{{\eps_k}}(x_{\eps_k}(t),t)$ are strictly positive for all  $k$ and $\omega$ by \eqref{334} and \eqref{991}, and due to the regularity assumptions on the coefficients, we have $|\gamma_{\eps_k}(t)|\leq CR$ uniformly for all $k$ and $\omega$.
And, $p_{2,{\eps_k}}$ satisfies the same elliptic equation as $p_{1,\eps_k}$:
\[
 -\partial_x(a(\eps_k^{-1}x ) \partial_x  p_{2,{\eps_k}})=0\quad\text{ in its positivity set in } N_{R,k}.
\]
Since $p_{\eps_k}$ are solutions to \eqref{1.1}, and $p_{\eps_k}$ and $p_{2,\eps_k}$ have the same support, then \eqref{992} yields that $p_{2,\eps_k}$ satisfies the free boundary condition in the viscosity sense as well:
\[
 \partial_t p_{2,{\eps_k}}=b(\eps_k^{-1}x)| \partial_x p_{2,{\eps_k}}|^2+g(\eps_k^{-1}x)| \partial_x p_{2,{\eps_k}}| \quad\text{ on the free boundary }x=x_{\eps_k}(t).
\]
Moreover, from \eqref{994}, 
we have, for all $(x,t)\in N_{R,k}$,
\beq\lb{333}
|p_{\eps_k}(x,t)-p_{2,{\eps_k}}(x,t)|\leq C(x_{\eps_k}(t)-x)_+R.
\eeq

Now, for some $C_1>0$ to be determined, we define 
\[
p_{2,*}(x,t):=  \frac{(1-C_1R)p_{*}(x_0-R,t )}{(x_*(t)-x_0+R)}    (x_*(t)-x)_+,
\]
which is  the unique solution to
\begin{equation*}
\left\{
\begin{aligned}
   & - \partial_x^2\,  p_{2,*}=0 \qquad\text{ in }N(R)\cap\{p_*>0\}=:N_{R,*},\\
   &p_{2,*}(x_0-R,t)=(1-C_1R) p_{*}(x_0-R,t )  \,\text{ and }\, p_{2,*}(x_*(t),t)=0.
\end{aligned}   
\right.
\end{equation*}
Recall that $p_*$ satisfies \eqref{998} in the domain  $N_{R,*}$. Thus, $p_*$ can be written explicitly in terms of $\bar A$ and $\bar F$ as in  \eqref{eq:expression pep}-\eqref{eq:expression C}. This expression for $p_*$, together with the definition of $p_{2,*}$, the Lipschitz continuity of $A$ and the boundedness of $\bar F$,  yields, as before,
\begin{align}
\lb{335}
|(1-C_1R)^{-1}p_{2,*}(x,t)-p_{*}(x,t)|&\leq C(x_{*}(t)-x)_+R
\end{align}
for $(x,t)\in N_{R,*}$. 
Using the bound \eqref{334.5}, followed by  \eqref{335}, yields, 
\beq\lb{331} 
(1-C_1R/2)p_*(x,t)\geq (1-C_1R)p_*(x,t)+cC_1R(x_*(t)-x)_+\geq p_{2,*}(x,t)\quad\text{ in }N_{R,*},
\eeq
when $C_1$ is sufficiently large but independent of $R$.

Using \eqref{333},  \eqref{334}, \eqref{990}, and then \eqref{331}, 
we find that if $C_1$ is sufficiently large independent of $R$, then for $(x,t)\in N_{R,*}$,
\beq\lb{995}
\begin{aligned}
\liminf_{k\to\infty}p_{2,\eps_k}(x,t)&\geq \liminf_{k\to\infty} \left[p_{\eps_k}(x,t)- CR(x_{\eps_k}(t)-x)_+\right]\\
&\geq (1-CR)\liminf_{k\to\infty}p_{\eps_k}(x,t)\\
&\geq (1-CR)p_*(x,t)\geq p_{2,*}(x,t).    
\end{aligned}
\eeq

Next, note that \eqref{335} implies, for $(x,t)\in N_{R, *}$,
\begin{align*}
    p_{2,*}(x,t)&\geq (1-C_1R)p_*(x,t)-(1-C_1R)CR(x_*(t)-x)_+.
\end{align*}
Since $x_*(t)\leq x_0$ for $t\leq t_0$, we have, for $x\leq x_0$ and $t\leq t_0$, 
\[
(x_*(t)-x)_+\leq (x_0-x)_+ = (x_0-x). 
\]
Combining this with the previous inequality yields that, in $N(R)\cap\{x\leq x_0\}$,
\[
p_{2,*}(x,t)\geq (1-C_1R)p_*(x,t)-CR(x_0-x).
\]
We use that $\varphi$ touches $p_*$ from below at $(x_0,t_0)$ locally in $N(R)$ to bound the right-hand side of the previous line from below and find, for $(x,t)$ near  $(x_0, t_0)$ and with $t\leq t_0$,
\[
p_{2,*}(x,t)\geq (1-C_1R)\varphi(x,t)-CR(x_0-x).
\]
The previous line and the fact that $\varphi$ is smooth imply that there  exists a positive constant $C'$, which is independent of $R$,  such that,  for $(x,t)$ near  $(x_0, t_0)$ and with $t\leq t_0$,
\begin{align*}
p_{2,*}(x,t)
&\geq \varphi(x,t)-C'R(x_0-x+t_0-t)    
\end{align*}
 Since $p_{2,*}(x_0,t_0)=p_*(x_0,t_0)=\varphi(x_0,t_0)=\tilde \varphi(x_0,t_0)=0$ and $x_0$ is a right-hand side free boundary, 
we conclude that
\[
\tilde \varphi(x,t)=\varphi(x,t)-C'R(x_0-x+t_0-t)
\]
touches $p_{2,* }$ from below at $(x_0,t_0)$ locally in  $N(R)$.

The construction implies that $\tilde\varphi_t(x_0,t_0)>0$.
Note that $\delta_0$ in \eqref{4.10} is independent of $R$. Thus, by taking $R$ to be small, \eqref{4.10} implies
\[
\tilde \varphi_t(x_0,t_0)+\tilde\delta_0< \ol V(x_0,\tilde\varphi_x(x_0,t_0))|\tilde\varphi_x(x_0,t_0)|
\]
with $\tilde\delta_0:=\delta_0/2$.
\end{proof}

We move on to:
\begin{proof}[Proof of Lemma \ref{lem:prop lemma 2}]
We begin by making the  rescaling
\begin{align*}
    v_{\eps_k}(x,t)&:=\eps_k^{-1}p_{\eps_k}\left(\eps_k(x-x_2)+x_2,\eps_k(t-t_{-})+t_{-}\right) \quad \text{and}\\
    X_k(t)&:= \eps_k^{-1}\left[x_{\eps_k}\left(\eps_k \left(t-t_{-}\right)+t_{-}\right)-x_2\right]+x_2.
\end{align*}
Using that  $p_{\ep_k}$ satisfies \eqref{4.12}, we find that  
$v_{\ep_k}(\cdot, t, \omega)$ satisfies, 
for all $t\in \left(t_-+\frac{t_0-t_--R}{\ep_k}, t_-+\frac{t_0-t_-}{\ep_k}\right)$,
\begin{equation}
\label{eq:vepk interior}
\left\{
\begin{aligned}
   &-\partial_x(a(x-x_2+x_2/\eps_k,\omega) \partial_x v_{\eps_k}(x,t,\omega))=0\quad \text{ for } x\in (x_2-\tilde{\lambda}/\eps_k,X_k(t)),\\
    &v_{\eps_k}(X_k{(t)},t,\omega)=0,
\end{aligned}   
\right.
\end{equation}
where $\tilde{\lambda} :=x_2-(x_0-R)$. We note that  assumption \ref{item:4.0''} of this lemma says $x_2\in (x_0-R, x_0)$, which implies $\tilde{\lambda}>0$.

Next, recalling that $p_{\ep_k}$ satisfies \eqref{4.12} and utilizing the notation introduced in \eqref{defab}, we obtain, for 
\[
y_{k}:=-x_2+\eps_k^{-1}x_2,
\]
that the free boundary condition in \eqref{4.12} becomes
\beq\lb{4.9'}
\begin{aligned}
\frac{(v_{\eps_k})_t}{|(v_{\eps_k})_x|} (X_k(t),t)&=b\left(X_k(t)-x_2+\eps_k^{-1}x_2,\omega\right)|\partial_x v_{\eps_k}(X_{k}(t),t)|+g(X_k(t)-x_2+\eps_k^{-1}x_2,\omega)\\
&=b(X_k(t),\tau_{y_k}\omega)|\partial_x v_{\eps_k}(X_{k}(t),t)|    +g(X_k(t),\tau_{y_k}\omega)
\end{aligned}
\eeq
in the viscosity sense. Here $\tau_y$ is the measure-preserving transformation group.

Note that the support of \( v_{\varepsilon_k}(\cdot, t) \) is a union of intervals, each of which expands continuously and monotonically in time. Therefore, by definition, \( v_{\varepsilon_k} \) is continuous for almost every time \( t \). (Discontinuities in \( v_{\varepsilon_k}(t, \cdot) \) may occur only at times when two disjoint intervals in its support merge into one.) Moreover, from the explicit formula, \( \partial_x v_{\varepsilon_k}(X_k(t), t) \) is continuous for almost every \( t \).
This implies that
\[
(v_{\eps_k})_t(X_k(t),t)=b(X_k(t),\tau_{y_k}\omega)|\partial_x v_{\eps_k}(X_{k}(t),t)|^2    +g(X_k(t),\tau_{y_k}\omega)|(v_{\eps_k})_x (X_k(t),t)|
\]
is continuous for almost all $t\in \left(t_-+\frac{t_0-t_--R}{\ep_k}, t_-+\frac{t_0-t_-}{\ep_k}\right)$. For those $t$, \eqref{4.9'} holds in the classical sense. 
Therefore, $X_k(t)$ is Lipschitz continuous and  satisfies, for $t\in \left(t_-+\frac{t_0-t_--R}{\ep_k}, t_-+\frac{t_0-t_-}{\ep_k}\right)$,
\beq\lb{4.9}
\begin{aligned}
\frac{d}{dt}X_k(t)=b(X_k(t),\tau_{y_k}\omega)|\partial_x v_{\eps_k}(X_{k}(t),t)|    +g(X_k(t),\tau_{y_k}\omega)
\end{aligned}
\eeq
with $X_k(t_{-})=\eps_k^{-1}\left[x_{\eps_k}(t_-)-x_2\right]+x_2$. Furthermore, using the definition of $X_k(t)$ evaluated at $t=t_{-}$, followed by assumption \ref{item:4.2} of this lemma, yields, for all $k$ large enough,
\beq
\label{eq:X_k at t-}
X_k(t_-) = \ep_k^{-1}\left(x_{\ep_k}(t_-)-x_2\right)+x_2 > x_2.
\eeq 

We now seek to bound the right-hand side of \eqref{4.9} from below. To do this, we will use the explicit formula for $\partial_x v_{\eps_k}(X_{k}(t),t,\omega)$, analogous to \eqref{newBD2}, to express  $\partial_x v_{\eps_k}(X_{k}(t),t,\omega)$ in terms of $v_{\ep_k}$ and $a$. We'll use the inequality \eqref{eq:item:4.7} to bound $v_{\ep_k}$ from below. Then, we will apply Lemma \ref{L.2.2} to ``extract the average" of $a$ in a uniform way.

We return to \eqref{eq:vepk interior}, the equation that $v_{\ep_k}$ satisfies in the its positivity set. First, define $\lambda:=x_2-x_3$. Since assumption \ref{item:4.7} says $x_3<x_2$, we have $\lambda>0$. (Note that this agrees with the notation in the proof of Lemma \ref{lem for prop 1}, namely \eqref{6.16.5}.) Moreover, according to assumption \ref{item:4.7} of this lemma, we have $x_3\geq x_0-R$, which implies $\lambda\leq \tilde\lambda$. Thus, the first line of \eqref{eq:vepk interior} holds for $x\in (x_2-\lambda/\ep_k, X_k(t))$. Second, we use the fact that $a$ is stationary to replace $a(x-x_2+x_2/\eps_k,\omega)$ by $a(x,\tau_{y_k}\omega)$.
Thus, by \eqref{newBD2}, 
\begin{align*}
&|\partial_x v_{\eps_k}(X_{k}(t),t,\omega)|=v_{\eps_k}(x_2-{\lambda}/\eps_k,t,\omega)\left(\int_{x_2- {\lambda}/\eps_k}^{X_k(t)}\frac{1}{a(y,\tau_{y_k}\omega)dy}\right)^{-1}\frac{1}{a(X_{k}(t),\tau_{y_k}\omega)}\\
&\qquad=\frac{v_{\eps_k}(x_2- {\lambda}/\eps_k,t,\omega)}{X_k(t)-x_2- {\lambda}/\eps_k}(X_k(t)-x_2+ {\lambda}/\eps_k)\left(\int_{x_2- {\lambda}/\eps_k}^{X_k(t)}\frac{1}{a(y,\tau_{y_k}\omega)dy}\right)^{-1}\frac{1}{a(X_{k}(t),\tau_{y_k}\omega)}.
\end{align*}
We'll now bound $v_{\ep_k}(x_2- {\lambda}/\eps_k,t,\omega)$ from below. Indeed, 
evaluating  the inequality \eqref{eq:item:4.7} at time $(\ep_k(t-t_-)+t_-)\in (t_-, t_0)$ and multiplying by $\ep_k^{-1}$ yields,
\[
q_2(r_2(t-t_{-})+\eps_k^{-1}\lambda)+\eps^{-1}_kq_2\delta_2/2<\ep_k^{-1}p_{\ep_k}(x_2-\lambda, \ep_k(t-t_-)+t_-).
\]
Upon recalling the definition of $v_{\ep_k}$ we therefore find,
\[
q_2(r_2(t-t_{-})+\eps_k^{-1}\lambda)+\eps^{-1}_kq_2\delta_2/2< v_{\eps_k}(x_2-\eps_k^{-1}\lambda,t).
\]
Thus, using the previous inequality to bound the right-hand side of the explicit expression for $|\partial_xv_{\ep_k}(X_k(t), t)|$ from below, and denoting 
\[
{H}(t,S):=\frac{r_2(t-t_{-})+\eps_k^{-1}\lambda+\eps^{-1}_k\delta_2/2}{S-x_2+\eps_k^{-1}\lambda},
\]
we find, for $t\in \left(t_-+\frac{t_0-t_--R}{\ep_k}, t_-+\frac{t_0-t_-}{\ep_k}\right)$,
\[
|\partial_xv_{\ep_k}(X_k(t), t)|\geq H(t, X_k(t))(X_k(t)-x_2+ {\lambda}/\eps_k)\left(\int_{x_2- {\lambda}/\eps_k}^{X_k(t)}\frac{1}{a(y,\tau_{y_k}\omega)dy}\right)^{-1}\frac{1}{a(X_{k}(t),\tau_{y_k}\omega)}.
\]
Next, by Lemma \ref{L.2.2}, since $x_2$ is fixed and $|y_k|\leq C/\eps_k$, 
\[
\lim_{\eps_k\to0}\sup_{z\geq x_2}\left|\frac{1}{z-x_2+ {\lambda}/{\eps_k}}\int_{-z+x_2- {\lambda}/\eps_k}^{0}\frac{1}{a( y,\tau_{y_k+z}\omega)}dy-\frac1{\ol a}\right|=0\quad \text{\rm a.s. in }\omega.
\]
Let us take $z=X_k(t)>x_2$ for $t\geq t_-$. Hence,
there exist a deterministic constant  $c_1\in (0,1)$, which depends only on the assumptions, and $\xi:(0,1)\times\Sigma\to [c_1,1]$ (we use the notation $\xi_{k}(\omega)=\xi(\eps_k,\omega)$) such that  a.s. in $\omega$, $\lim_{k\to \infty}\xi_k(\omega)=1$ and uniformly for all $t\geq t_{-}$,
\[
(X_k(t)-x_2+ {\lambda}/\eps_k)\left(\int_{x_2- {\lambda}/\eps_k}^{X_k(t)}\frac{1}{a(y,\tau_{y_k}\omega)dy}\right)^{-1}\geq \ol a\, \xi_k(\omega).
\] 
Combining the previous inequality with the bound from below on  $|\partial_xv_{\ep_k}(X_k(t), t)|$
and recalling the notation introduced in \eqref{3.14}, we obtain
\begin{align*}
|\partial_x v_{\eps_k}(X_{k}(t),t,\omega)|&\geq {H}(t, X_k(t))\frac{q_2\ol a\,\xi_k(\omega)}{a(X_{k}(t),\tau_{y_k}\omega)}= {H}(t, X_k(t))q_2\xi_k(\omega)\mu(X_k(t),\tau_{y_k}\omega).    
\end{align*}
Thus, \eqref{4.9} and the previous line imply, for $t\in \left(t_-+\frac{t_0-t_--R}{\ep_k}, t_-+\frac{t_0-t_-}{\ep_k}\right)$,
\beq
\label{3.165}
\frac{d}{dt}X_k(t)\geq b(X_k(t),\tau_{y_k}\omega) {H}(t,X_k(t))q_2 \xi_k(\omega)  \mu(X_k(t),\tau_{y_k}\omega)+g(X_k(t),\tau_{y_k}\omega),
\eeq
which is the lower bound on $\frac{d}{dt}X_k$ that we had been seeking.

Now, we let $S_k(t,\tau_{y_k}\omega)$ be the unique solution to
\beq\lb{4.11}
\left\{
\begin{aligned}
&\frac{d}{dt} S_k(t,\tau_{y_k}\omega)= q_2 b(S_k(t,\tau_{y_k}\omega),\tau_{y_k}\omega) \mu(S_k(t,\tau_{y_k}\omega),\tau_{y_k}\omega)+g(S_k(t,\tau_{y_k}\omega))\\
&S_k(t_{-},\tau_{y_k}\omega)=x_2  .  
\end{aligned}
\right.
\eeq
Recalling the effective ODE \eqref{FB} implies,
\[
S_k(t+t_{-},\tau_{y_k}\omega)=S^{x_2}_{q_2}(t,\tau_{y_k}\omega).
\]
In addition, \eqref{eq:X_k at t-} and the definition of $S_k$ imply $X_k(t_-)>x_2=S_k(t_{-},\tau_{y_k}\omega)$. The goal is to show that $X_k(t)$ is not too much smaller than $ S_k(t,\tau_{y_k}\omega)$ when $k$ is large and for times $t$ greater than $t_-$ (to be specified precisely later).

To this end, we estimate
${H}(t,S_k(t,\tau_{y_k}\omega))$ from below.  
We will use the notation $r_{q_2}=\ol V(x_0,q_2)$. Further, we recall that $S_{q_2,\eps_k}^{x_2}$ was defined in \eqref{FBeps} via,
\[
S_{q_2,\eps_k}^{x_2}(t,\tau_{y_k}\omega)=\eps_k \left[S_{q_2}^{x_2}(\eps_k^{-1}t,\tau_{y_k}\omega)-x_2\right]+x_2.
\]
Let $C_{\max}$ be as in Lemma \ref{L.2.6}. Set $C_{\max,2}:=q_2C_{\max}+\|g\|_\infty$ and 
\[
\tau:=\min\left\{\frac{t_0-t_-}{2},\,\frac{\delta_2}{2r_{2}(t_0-t_{-})},\,\frac{\delta_2}{2C_{\max,2}}\right\}.
\]
Since $|y_k|\leq C/\eps_k$,  Proposition \ref{P.2.13} implies  that there exists a set of full measure $\Sigma_0\subset\Sigma$ such that, for all $\omega\in \Sigma_0$, 
\[
\limsup_{k\to \infty }\left[S^{x_2}_{q_2,\eps_k}(t,\tau_{y_k}\omega)-x_2\right]= r_{q_2} t
\]
uniformly for all $t\in [\tau,t_0-t_-]$. Let us take 
\beq
\label{eq:sigma}
\sigma:=(3r_2(t_0-t_{-}))^{-1}\delta_2>0.
\eeq
We have, for  $k=k(\tau,\sigma)$  sufficiently large, 
\beq\lb{3.15}
(1-\sigma)r_{q_2}(t-t_{-})\leq S_{k}(t,\tau_{y_k}\omega)  -x_2\leq (1+\tau)r_{q_2}(t-t_{-})
\eeq
whenever $t-t_{-}\in \eps^{-1}_k[\tau,t_0-t_- ]$. 

On the one hand, for $t-t_{-}\in \eps^{-1}_k[\tau,t_0-t_- ]$, we have, 
\begin{align*}
    {H}(t,S_{k}(t,\tau_{y_k}\omega))&\geq \frac{r_2(t-t_{-})+\eps_k^{-1}\lambda+\eps^{-1}_k\delta_2/2}{(1+\tau)r_{q_2}(t-t_{-})+\eps_k^{-1}\lambda} &\text{ by \eqref{3.15}}
    \\& \geq 
    \frac{r_2(t-t_{-})+\eps_k^{-1}\lambda+\eps^{-1}_k\delta_2/2}{r_{q_2}(t-t_{-})+\tau \ep_k^{-1}r_{q_2}(t_0-t_-)+\eps_k^{-1}\lambda} & \text{ since }t-t_{-}\in \eps^{-1}_k[\tau,t_0-t_- ]
    \\ & \geq
    \frac{r_2(t-t_{-})+\eps_k^{-1}\lambda+\eps^{-1}_k\delta_2/2}{r_{q_2}(t-t_{-})+\delta_2 \ep_k^{-1}r_{q_2}/(2r_2)+\eps_k^{-1}\lambda} &\text{ by our choice of $\tau$}
    \\&
    \geq r_2/r_{q_2}, &
\end{align*}
where the last inequality holds since $r_{q_2}>r_2$, which holds by \eqref{item:4.1'}. 

On the other hand, for $t-t_{-}\in [0, \eps^{-1}_k\tau]$, we use the bound $S_{k}(t,\tau_{y_k}\omega)\leq x_2+C_{\max,2}(t-t_{-})$, which holds by Lemma \ref{L.2.6}, followed by the definition of $\tau$, to find,
\[
{H}(t,S_{k}(t,\tau_{y_k}\omega))\geq \frac{r_2(t-t_{-})+\eps_k^{-1}\lambda+\eps^{-1}_k\delta_2/2}{C_{\max,2}(t-t_{-})+\eps_k^{-1}\lambda}\geq 1 \geq \frac{r_2}{r_{q_2}} .
\]
 Thus, we conclude
 \beq
 \label{eq:H}
{H}(t,S_{k}(t,\tau_{y_k}\omega))\geq  \frac{r_2}{r_{q_2}} \quad\text{ for }t-t_{-}\in \eps^{-1}_k[0,t_0-t_- ].
\eeq

We will now use these estimates on ${H}$ to show that a rescaled version of $S_k$ is a subsolution to the ODE that $X_k$ is a supersolution of: namely, \eqref{3.165}. To this end, consider 
\[
\tilde S(t):=S_k\left( \frac{r_2\xi_k(\omega)}{r_{q_2}}(t-t_{-})+t_{-},\tau_{y_k}\omega\right).
\]
Since $0\leq\frac{r_2\xi_k(\omega)}{r_{q_2}}\leq 1$, we have, for $t-t_{-}\in \eps^{-1}_k[0,t_0-t_- ]$,
\[
t_-\leq \frac{r_2\xi_k(\omega)}{r_{q_2}}(t-t_{-})+t_{-}\leq t.
\]
According to definition of $S_k$, we have $S_k(t_-)=x_2$, which therefore implies $\tilde{S}(t)\geq x_2$ for $t-t_{-}\in \eps^{-1}_k[0,t_0-t_- ]$. 
And, by the definition of $H$,  the map $t\mapsto H(t, S)$ is increasing in $t$ for $S>x_2-\ep_k^{-1}\lambda$. Together with \eqref{eq:H}, this implies,
for $t-t_{-}\in \eps^{-1}_k[0,t_0-t_- ]$,
\[
\frac{r_2}{r_{q_2}}\leq H\left(\frac{r_2\xi_k(\omega)}{r_{q_2}}(t-t_{-})+t_{-},S_{k}\left(\frac{r_2\xi_k(\omega)}{r_{q_2}}(t-t_{-})+t_{-},\tau_{y_k}\omega\right)\right)\leq H(t,\tilde S(t)).  
\]
Upon using \eqref{4.11}, followed by the above estimate on ${H}$ and the fact that $\frac{r_2\xi_k}{r_{q_2}}\leq 1$, we find, for $t-t_{-}\in \eps^{-1}_k[0,t_0-t_- ]$,
\begin{align*}
\frac{d}{dt}\tilde{S}(t) &= 
\frac{r_2\xi_k(\omega)}{r_{q_2}}q_2 b(\tilde{S}(t),\tau_{y_k}\omega) \mu(\tilde{S}(t),\tau_{y_k}\omega)+\frac{r_2\xi_k(\omega)}{r_{q_2}} g(\tilde{S}(t),\tau_{y_k}\omega)\\
&\leq {H}(t,\tilde{S}(t))\xi_k(\omega)q_2\,b(\tilde{S}(t),\tau_{y_k}\omega) \mu(\tilde{S}(t),\tau_{y_k}\omega)+g(\tilde{S}(t),\tau_{y_k}\omega).
\end{align*}
This is exactly the ODE that we've shown $X_k(t)$ to be a supersolution of: recall \eqref{3.165}. Further, let us note that $\tilde{S}(t_-)=x_2<X_k(t_-)$, where the second inequality is exactly from \eqref{eq:X_k at t-}. Therefore, the comparison principle for ODEs  yields
\beq\lb{4.8}
X_k(t)\geq S_k\left(\frac{r_2\xi_k(\omega)}{r_{q_2}}(t-t_{-})+t_{-},\tau_{y_k}\omega\right)
\eeq
whenever $t-t_{-}\in [0,\eps_k^{-1}(t_0-t_{-})]$.

Recall that $x_{\eps_k}$ denotes the free boundary of $p_{\eps_k}$. Using the definition of $X_k$, the bound \eqref{4.8} and the definition of $x_2$, followed by \eqref{3.15}, we find, 
\begin{align*}
x_{\eps_k}(t_0)&= \eps_k\left[X_k(\eps_k^{-1}(t_0-t_{-})+t_{-})-x_2\right]+x_2\\
    &\geq \eps_k\left[S_k(\eps_k^{-1}r_{q_2}^{-1}r_2\xi_k(\omega)( t_0-t_{-})+t_{-},{\tau_{y_k}}\omega)-x_2\right]-r_2(t_0-t_{-})+x_0+\delta_2\\
    &\geq ((1-\sigma)\xi_k(\omega)-1 )r_2(t_0-t_{-})+x_0+\delta_2
\\&=r_2(t_0-t_-)(\xi_k(\omega)-1 ) - r_2(t_0-t_{-})\sigma\zeta_k(\omega)+x_0+\delta_2.
\end{align*}
Recalling our choice of $\sigma$ in \eqref{eq:sigma} 
and using $\xi_k\in [c_1,1]$ yields that the middle term in the previous line is bounded from below by $-\delta_2/3$. Then, we take $k$ to be sufficiently large, depending only on $r_2,t_0-t_{-},\delta_2$ and $\omega$, such that
\[
(\xi_k(\omega)-1 )r_2(t_0-t_{-})\geq -\delta_2/3
\]
holds. 
Putting  these together, we obtain for all $\eps_k$ sufficiently small, depending on $\omega$ and the assumptions, that 
\[
x_{\eps_k}(t_0)\geq x_0+\delta_2/3
\]
holds, as desired.

\end{proof}

\subsection{Proof of main result}
\label{ss:proof of main result}

Finally, our main result follows from the proposition. Since the solutions are, in general, not continuous in time, the convergence result is almost everywhere in time.

\begin{proof}[Proof of Theorem \ref{thm:main}]
Let $p^0_\eps(x,\omega)$ be the unique solution to
\beq\lb{1.1'}
-\partial_x(A(x,\eps^{-1}x,\omega) \partial_x p_\eps)=F(x,\eps^{-1}x,\omega)
\eeq
in $\calO$ taking $0$ values outside $\calO$. By Lemma \ref{L.9.1}, we have that, almost surely in $\omega$,
\[
p^0_\eps(x,\omega)\to \ol p^0(x)
\]
where $\ol p^0$ is a continuous deterministic function uniquely solving 
\beq\lb{1.2'}
-\partial_x(\overline A(x) \partial_x \overline p(x))=\overline F(x)
\eeq
and $\ol p^0=0$ outside $\calO$.
Since $p_\eps(\cdot,0,\omega)$ satisfies the elliptic equation \eqref{1.1'} in $\Omega_{p_\eps}(0,\omega)$ and $F$ is strictly positive, we find that
\[
|\partial_xp_\eps(x,0,\omega)|=|\partial_xp^0_\eps(x,\omega)|
\]
is strictly positive for $x\in\partial\calO$, uniformly for all $\eps>0$ and $\omega$. This implies that the support of $p_\eps(x,0,\omega)$ expands immediately and uniformly in $\eps$ and $\omega$.

Let $\Omega_\eps:=\Omega_{p_\eps}$ and
\[
\Omega^*(t,\omega):=\limsup_{\eps\to 0,s\to t}\Omega_\eps(s)=\bigcap_{c_1,c_2\to 0}\bigcup_{|s-t|<c_1,\eps<c_2}\Omega_\eps(s),
\]
\[
\Omega_*(t,\omega):=\liminf_{\eps\to 0,s\to t}\Omega_\eps(s)=\bigcup_{c_1,c_2\to 0}\bigcap_{|s-t|<c_1,\eps<c_2}\Omega_\eps(s).
\]
We let $p^*(\cdot,t)$ solve
\[
-\partial_x(\overline A(x) \partial_x p)=\overline F(x)
\]
in the domain of $\Omega^*(t)$ with $0$ boundary data, and $p_*(\cdot,t)$ in the domain of $\Omega_*(t)$.

Since  $\Omega_\eps $ strictly expands at time $0$ uniformly for all $\eps$ and $\omega$, both $\Omega_*$ and $\Omega^*$ strictly expand at time $0$. We have for any $t>0$,
\[
\Omega_*(0,\omega)\subseteq \Omega_{* }(0,\omega)\subseteq \ol O\subset \Omega_{* }(t,\omega)\subseteq \Omega^* (t,\omega).
\]
Consequently,  for any $\delta>0$,
\beq\lb{4.21}
\Omega^* (0)\subset (1+\delta)\Omega_* (\delta,\omega).
\eeq

By Proposition \ref{P.3.2}, $\Omega^* $ is a viscosity subflow and $\Omega_* $ is a viscosity superflow of \eqref{1.3}. 
Since the coefficients in \eqref{1.3} are independent of time, one can check that that 
\[
(1+\delta )p_* (x,(1+\delta)t+\delta,\omega)
\]
is a viscosity supersolution to \eqref{1.3}.
Thus, it can also be verified directly by the definition of viscosity superflow that
\[
(1+\delta )\Omega_* ((1+\delta)t+\delta,\omega)
\]
is a superflow of \eqref{1.3}. 
Therefore, using \eqref{4.21}, the comparison principle (Theorem \ref{L.cp}) yields for all $t\geq 0$,
\[
\Omega^* (t,\omega)\subseteq (1+\delta )\Omega_* ((1+\delta)t+\delta,\omega).
\]
After passing $\delta\to 0$, this implies that
\[
p^* (x,t,\omega)\leq \sup_{s\to t}p_* (x,s,\omega).
\]

On the other hand, since $\Omega_{* }(t,\omega)\subseteq \Omega^* (t,\omega)$, the definitions of $p^* $ and $p_* $ yield
\[
p^* (x,t,\omega)\geq p_* (x,t,\omega)
\]
and so, in view of \eqref{half}, we actually get equality 
\[
p^* (x,t,\omega)^\#=p_*(x,t,\omega)^\#. 
\]
By Theorem \ref{T.3.4} and Lemma \ref{L.4.8}, there is a unique solution (viscosity flow) to \eqref{1.3}. In view of Definition \ref{D.1.4}, it follows that 
\[
(\Omega^*(\cdot,\omega) )^\#=(\Omega_*(\cdot,\omega))^\#=\ol\Omega(\cdot)
\]
is the unique viscosity flow of \eqref{1.3} with initial domain $\calO$. Since \eqref{1.3} and $\calO$ are deterministic, the flow is deterministic. Therefore
\beq\lb{4.13}
\begin{aligned}
p_* (x,t,\omega)^\#=p^* (x,t,\omega)^\#=\ol p(x,t)
\end{aligned}
\eeq
where $\ol p$ is the unique deterministic viscosity solution to \eqref{1.3} with initial data $\ol p^0$.

Next, to see the convergence of $p_\eps$ for almost every time, due to \eqref{4.13}, it suffices to show that $\ol p(x,t)$ is continuous in space-time for almost all $t$. Note that  $\ol p(\cdot,t)$ solves \eqref{1.2'} 
in $\{\ol p(\cdot,t)>0\}$ for all $t>0$, and consequently, $\ol p(x,t)$ is uniformly continuous in space. 
Since $\ol p$ is lower semicontinuous, $\{\ol p(\cdot,t)>0\}$ is an open subset of $\bbR$. Since the support of $\ol p$ is increasing continuously in Hausdorff distance, and  $\ol p(\cdot,t)$ solves an elliptic equation in the open set,
$\ol p$ is continuous at time $t$ as long as the open intervals consisting $\{\ol p(\cdot,t)>0\}$ does not merge at the time $t$. 
Since there are initially at most countably many open intervals in $\calO$, as the set expands in time, the open intervals consisting $\{\ol p(\cdot,t)>0\}$ only merge for countably many times. Therefore $\ol p(x,\cdot)$ is continuous for almost all times for each $x\in\bbR$. This finishes the proof.

Finally, we note that $\overline{A}$, $\overline{F}$, and $\overline{V}$ and  satisfy Assumption \ref{assump2} due to Lemmas \ref{L.9.1} and \ref{L.4.8}, respectively.
\end{proof}



\begin{thebibliography}{10}

\bibitem{armstrong2014regularity}
{\sc S.~N. Armstrong and C.~K. Smart}, {\em Regularity and stochastic homogenization of fully nonlinear equations without uniform ellipticity}, Annals of Probability, 42 (2014), pp.~2558--2594.

\bibitem{barles1998new}
{\sc G.~Barles and P.~E. Souganidis}, {\em A new approach to front propagation problems: theory and applications}, Archive for rational mechanics and analysis, 141 (1998), pp.~237--296.

\bibitem{becker1981multiparameter}
{\sc M.~E. Becker}, {\em Multiparameter groups of measure-preserving transformations: a simple proof of wiener's ergodic theorem}, The Annals of Probability,  (1981), pp.~504--509.

\bibitem{bella2017stochastic}
{\sc P.~Bella, B.~Fehrman, J.~Fischer, and F.~Otto}, {\em Stochastic homogenization of linear elliptic equations: Higher-order error estimates in weak norms via second-order correctors}, SIAM Journal on Mathematical Analysis, 49 (2017), pp.~4658--4703.

\bibitem{bergelson2012discrete}
{\sc V.~Bergelson, A.~Leibman, and C.~Moreira}, {\em From discrete-to continuous-time ergodic theorems}, Ergodic Theory and Dynamical Systems, 32 (2012), pp.~383--426.

\bibitem{CaffarelliFriedman1987}
{\sc L.~A. Caffarelli and A.~Friedman}, {\em Asymptotic behavior of solutions of {$u_t=\Delta u^m$} as {$m\to\infty$}}, Indiana Univ. Math. J., 36 (1987), pp.~711--728.

\bibitem{cbook}
{\sc L.~A. Caffarelli and S.~Salsa}, {\em A geometric approach to free boundary problems}, vol.~68, American Mathematical Soc., 2005.

\bibitem{CJK}
{\sc S.~Choi, D.~Jerison, and I.~Kim}, {\em Regularity for the one-phase hele-shaw problem from a lipschitz initial surface}, American journal of mathematics, 129 (2007), pp.~527--582.

\bibitem{choi2009local}
\leavevmode\vrule height 2pt depth -1.6pt width 23pt, {\em Local regularization of the one-phase hele-shaw flow}, Indiana University mathematics journal,  (2009), pp.~2765--2804.

\bibitem{CKY}
{\sc K.~Craig, I.~Kim, and Y.~Yao}, {\em Congested aggregation via newtonian interaction}, Archive for Rational Mechanics and Analysis, 227 (2018), pp.~1--67.

\bibitem{David_S}
{\sc N.~David and M.~Schmidtchen}, {\em On the incompressible limit for a tumour growth model incorporating convective effects}, Communications on Pure and Applied Mathematics, 77 (2024), pp.~2613--2650.

\bibitem{Hongjie}
{\sc H.~Dong, F.~Gancedo, and H.~Q. Nguyen}, {\em Global well-posedness for the one-phase muskat problem}, Communications on Pure and Applied Mathematics, 76 (2023), pp.~3912--3967.

\bibitem{dong23}
\leavevmode\vrule height 2pt depth -1.6pt width 23pt, {\em Global well-posedness for the one-phase muskat problem in 3d}, arXiv preprint arXiv:2308.14230,  (2023).

\bibitem{EJ}
{\sc C.~M. Elliott and V.~Janovsk{\`y}}, {\em A variational inequality approach to hele-shaw flow with a moving boundary}, Proceedings of the Royal Society of Edinburgh Section A: Mathematics, 88 (1981), pp.~93--107.

\bibitem{escher1997classical}
{\sc J.~Escher and G.~Simonett}, {\em Classical solutions of multidimensional hele--shaw models}, SIAM Journal on Mathematical Analysis, 28 (1997), pp.~1028--1047.

\bibitem{figalli2020generic}
{\sc A.~Figalli, X.~Ros-Oton, and J.~Serra}, {\em Generic regularity of free boundaries for the obstacle problem}, Publications math{\'e}matiques de l'IH{\'E}S, 132 (2020), pp.~181--292.

\bibitem{GQ}
{\sc O.~Gil and F.~Quir\'os}, {\em Convergence of the porous media equation to {H}ele-{S}haw}, Nonlinear Anal., 44 (2001), pp.~1111--1131.

\bibitem{gu2017high}
{\sc Y.~Gu}, {\em High order correctors and two-scale expansions in stochastic homogenization}, Probability Theory and Related Fields, 169 (2017), pp.~1221--1259.

\bibitem{hele1898experiments}
{\sc H.~S. HELE-SHAW}, {\em Experiments on the nature of surface resistance of water and streamline motion under certain experimental conditions}, Trans. Inst. Naval Archtects, 40 (1898), pp.~21--46.

\bibitem{jacobs2022tumor}
{\sc M.~Jacobs, I.~Kim, and J.~Tong}, {\em Tumor growth with nutrients: Regularity and stability}, Communications of the American Mathematical Society, 3 (2023), pp.~166--208.

\bibitem{kallenberg1997foundations}
{\sc O.~Kallenberg}, {\em Foundations of modern probability}, vol.~2, Springer, 1997.

\bibitem{kim2003}
{\sc I.~Kim}, {\em Uniqueness and existence results on the hele-shaw and the stefan problems}, Archive for Rational Mechanics \& Analysis, 168 (2003).

\bibitem{kim2}
\leavevmode\vrule height 2pt depth -1.6pt width 23pt, {\em Regularity of the free boundary for the one phase hele--shaw problem}, Journal of Differential Equations, 223 (2006), pp.~161--184.

\bibitem{homofb}
\leavevmode\vrule height 2pt depth -1.6pt width 23pt, {\em Homogenization of the free boundary velocity}, Archive for rational mechanics and analysis, 185 (2007), pp.~69--103.

\bibitem{kim2021porous}
{\sc I.~Kim and Y.~P. Zhang}, {\em Porous medium equation with a drift: free boundary regularity}, Archive for Rational Mechanics and Analysis, 242 (2021), pp.~1177--1228.

\bibitem{kim2024regularity}
\leavevmode\vrule height 2pt depth -1.6pt width 23pt, {\em Regularity of hele-shaw flow with source and drift}, Annals of PDE, 10 (2024), p.~20.

\bibitem{kim2009error}
{\sc I.~C. Kim}, {\em Error estimates on homogenization of free boundary velocities in periodic media}, in Annales de l'Institut Henri Poincar{\'e} C, Analyse non lin{\'e}aire, vol.~26, Elsevier, 2009, pp.~999--1019.

\bibitem{kim2012homogenization}
\leavevmode\vrule height 2pt depth -1.6pt width 23pt, {\em Homogenization and error estimates of free boundary velocities in periodic media}, Applicable Analysis, 91 (2012), pp.~1177--1187.

\bibitem{KimMellet2009}
{\sc I.~C. Kim and A.~Mellet}, {\em Homogenization of a {H}ele-{S}haw problem in periodic and random media}, Arch. Ration. Mech. Anal., 194 (2009), pp.~507--530.

\bibitem{kingman1968ergodic}
{\sc J.~F. Kingman}, {\em The ergodic theory of subadditive stochastic processes}, Journal of the Royal Statistical Society: Series B (Methodological), 30 (1968), pp.~499--510.

\bibitem{lin2019stochastic}
{\sc J.~Lin and A.~Zlato{\v{s}}}, {\em Stochastic homogenization for reaction--diffusion equations}, Archive for Rational Mechanics and Analysis, 232 (2019), pp.~813--871.

\bibitem{maury2010}
{\sc B.~Maury, A.~Roudneff-Chupin, and F.~Santambrogio}, {\em A macroscopic crowd motion model of gradient flow type}, Mathematical Models and Methods in Applied Sciences, 20 (2010), pp.~1787--1821.

\bibitem{PV1981}
{\sc G.~C. Papanicolaou and S.~R.~S. Varadhan}, {\em Boundary value problems with rapidly oscillating random coefficients}, in Random fields, {V}ol. {I}, {II} ({E}sztergom, 1979), vol.~27 of Colloq. Math. Soc. J\'anos Bolyai, North-Holland, Amsterdam-New York, 1981, pp.~835--873.

\bibitem{patrizi2022stochastic}
{\sc S.~Patrizi}, {\em Stochastic homogenization of a porous-medium type equation}, arXiv preprint arXiv:2209.06342,  (2022).

\bibitem{PQV}
{\sc B.~Perthame, F.~Quir\`os, and J.-L. V\'azquez}, {\em The hele-shaw asymptotics for mechanical models of tumor growth}, Arch. Ration. Mech. Anal.,  (2014), pp.~93--127.

\bibitem{schwab2024well}
{\sc R.~Schwab, S.~Tu, and O.~Turanova}, {\em Well-posedness for viscosity solutions of the one-phase muskat problem in all dimensions}, arXiv preprint arXiv:2404.10972,  (2024).

\bibitem{SulakTuranova}
{\sc A.~Sulak and O.~Turanova}, {\em The incompressible limit of an inhomogeneous model of tissue growth}, arXiv:2503.19849,  (2025).

\bibitem{wiener1939ergodic}
{\sc N.~Wiener}, {\em The ergodic theorem}, Duke Mathematical Journal, 5 (1939), pp.~1--18.

\end{thebibliography}
\bibliographystyle{siam}

\end{document}